\documentclass{article}
\pdfoutput=1

\usepackage[narrow]{arxiv}

\usepackage[utf8]{inputenc} 
\usepackage[T1]{fontenc}    
\usepackage[hypertexnames=false]{hyperref}       
\usepackage{url}            
\usepackage{booktabs}       
\usepackage{amsfonts}       
\usepackage{nicefrac}       
\usepackage{microtype}      

\usepackage{lipsum}         
\usepackage{graphicx}
\usepackage{natbib}
\usepackage{doi}

\title{Stochastic Non-Smooth Convex Optimization with Unbounded Gradients}
\renewcommand{\shorttitle}{Stochastic Non-Smooth Convex Optimization with Unbounded Gradients}

\newif\ifuniqueAffiliation
\uniqueAffiliationtrue

\ifuniqueAffiliation 
\author{
  Dmitry Kovalev \\
  Yandex Research \\
  \texttt{dakovalev1@gmail.com}
}
\else
\usepackage{authblk}

\author[1]{%
  Dmitry Kovalev\thanks{\texttt{dakovalev1@gmail.com}}%
}
\affil[1]{Yandex Research}
\fi

\usepackage{algorithm,algpseudocodex}
\usepackage{nicematrix}

\usepackage[textsize=tiny,color=orange!20,bordercolor=orange,]{todonotes}
\presetkeys{todonotes}{inline}{}

\usepackage{xcolor}
\hypersetup{
  colorlinks=true,
  linkcolor=red!75!black,
  citecolor=blue!50!black
}

\usepackage{mathtools,amsthm,amssymb}
\allowdisplaybreaks[4]

\usepackage{enumitem}
\setlist{topsep=0pt,partopsep=0pt,itemsep=0pt}

\usepackage{cleveref}
\usepackage{crossreftools}
\crefname{problem}{problem}{problems}
\crefname{condition}{condition}{conditions}
\crefname{case}{case}{cases}
\crefname{property}{property}{properties}
\creflabelformat{problem}{(#2#1#3)}
\creflabelformat{condition}{(#2#1#3)}

\usepackage{annotate}
\usepackage{mythm}
\usepackage{mymath}

\usepackage{graphicx}

\usepackage{bm}

\newcommand{\normo}{\gennorm{\mathrm{op}}{}}

\renewcommand{\norm}{\gennorm{2}{}}
\renewcommand{\sqn}{\gennorm{2}{2}}

\newcommand{\Rd}{\R^d}

\newcommand{\ox}{\overline{x}}

\newcommand{\df}{\mathrm{d}}

\usepackage{nccmath}

\newcommand{\mint}{\medint\int}
\newcommand{\tlim}{{\textstyle\lim}}
\newcommand{\tlimsup}{{\textstyle\limsup}}

\newcommand{\N}{\mathbb{N}}
\newcommand{\reg}{\mathrm{Reg}}
\newcommand{\las}{\overset{\smash{\text{a.s.}}}{\leq}}
\newcommand{\hC}{{\vphantom{\cC}\smash{\hat{\cC}}}}
\newcommand{\hF}{{\vphantom{\cF}\smash{\hat{\cF}}}}

\newcommand{\hK}{\vphantom{K}\smash{\hat{K}}}

\newcommand{\hg}{\hat{g}}
\newcommand{\hG}{\hat{G}}

\newcommand{\hepsilon}{\hat{\epsilon}}

\newcommand{\gpartial}{\partial^\circ}
\newcommand{\gprime}{^\circ}

\NewDocumentCommand{\clip}{sO{}m}{\operatorname{clip}_{#2}\IfBooleanTF{#1}{\brr*{#3}}{\brr{#3}}}

\usepackage{tikz}
\usepackage{pgfplots}

\pgfplotsset{compat=1.18}

\makeatletter

\newdimen\mysqrt@tot
\newdimen\mysqrt@bar

\newcommand{\mysqrt}[1]{%
  \mathchoice
  {\mysqrt@{\displaystyle}{#1}}%
  {\mysqrt@{\textstyle}{#1}}%
  {\mysqrt@{\scriptstyle}{#1}}%
  {\mysqrt@{\scriptscriptstyle}{#1}}%
}
\def\mysqrt@max#1#2{\ifdim #1>#2 #1\else #2\fi}

\newcommand{\mysqrt@}[2]{%
  \begingroup
  \setbox0=\hbox{$#1#2$}%
  \mysqrt@tot=\dimexpr\ht0+\dp0\relax
  \mysqrt@bar = \dimexpr\mysqrt@max{\dimexpr1.1\mysqrt@tot\relax}{\dimexpr1.5pt+\mysqrt@tot\relax}\relax
  \tikz[baseline=(R.base),inner sep=0pt,outer sep=0pt]{%
    \node[inner sep=0pt,outer sep=0pt] (R) {$#1#2$};%
    \draw[
      line width=0.45pt,
      line cap=round,
      line join=round
    ]
    ([xshift=-0.68\mysqrt@tot,yshift=0.5\mysqrt@tot]R.south west)
    --
    ([xshift=-0.6\mysqrt@tot,yshift=0.6\mysqrt@tot]R.south west);
    \draw[
      line width=0.45pt,
      line cap=round,
      line join=round
    ]
    ([xshift=-0.4\mysqrt@tot,yshift=0]R.south west)
    --
    ([xshift=0,yshift=\mysqrt@bar]R.south west)
    --
    ([xshift=0em,yshift=\mysqrt@bar]R.south east);
    \draw[
      line width=0.45pt,
      line cap=round,
      line join=round,
      fill
    ]
    ([xshift=-0.6\mysqrt@tot,yshift=0.6\mysqrt@tot]R.south west)
    --
    ([xshift=-0.38\mysqrt@tot,yshift=0.05\mysqrt@bar]R.south west)
    --
    ([xshift=-0.4\mysqrt@tot,yshift=0]R.south west)
    --
    ([xshift=-0.63\mysqrt@tot,yshift=0.5625\mysqrt@tot]R.south west);
    \useasboundingbox ([xshift=-0.6\mysqrt@tot,yshift=0]R.south west)
    rectangle
    ([xshift=0em,yshift=\mysqrt@bar]R.south east);
  }%
  \endgroup
}

\makeatother

\begin{document}
\maketitle

\begin{abstract}

  Much of the existing theory on first-order non-smooth optimization is built on a restrictive assumption that the gradients of the objective function are uniformly bounded. We introduce a much more realistic class of generalized Lipschitz functions, where the gradient norms are bounded by an affine function of the optimality gap. We then ask a natural question: what algorithm achieves the best global convergence rates for solving convex stochastic generalized Lipschitz optimization problems? To address this, we develop a new convergence analysis for several existing algorithms and find that AdamW with clipped updates, provably outperforms other popular stochastic optimization methods, such as SGD and AdaGrad. Moreover, our analysis establishes the critical role of AdamW's exponentially weighted gradient accumulation, as opposed to simple averaging. We further show that clipped AdamW is universal and achieves improved rates under the popular generalized smoothness assumption, analyze the convergence of clipped AdamW with diagonal and matrix preconditioners, and extend our results to the quasar-convex setting.
\end{abstract}

\section{Introduction}\label{sec:intro}

\textbf{Motivation.}
Stochastic non-smooth first-order optimization methods play a central role in modern machine learning, particularly in training deep neural networks with non-smooth components such as ReLU activations and max-pooling layers \citep{goodfellow2016deep,he2015delving,glorot2011deep}. The convergence properties of these methods are typically analyzed under the assumption that the objective function is globally Lipschitz, or equivalently, that its gradients (or their appropriate generalizations) are uniformly bounded \citep{nesterov2018lectures,shor2012minimization,zhang2020complexity}. Although this assumption is mathematically convenient and leads to clean worst-case convergence guarantees, it also imposes significant limitations. In particular, global Lipschitz continuity may fail even for the training loss of simple two-layer ReLU networks. Moreover, theory based on bounded gradients provides limited insight into the empirical success of Adam-type methods. Despite extensive efforts to develop better optimizers for deep learning, Adam and AdamW \citep{kingma2014adam,loshchilov2017decoupled} remain widely used and highly competitive, often outperforming other adaptive and non-adaptive methods such as AdaGrad \citep{duchi2011adaptive}, SGD \citep{robbins1951stochastic}, and Signum \citep{bernstein2018signsgd}. Yet, under the bounded gradients assumption, simple SGD or its momentum-based variants already attain optimal convergence rates in both convex \citep{nemirovski2009robust} and non-convex \citep{cutkosky2023optimal} settings. Therefore, even in the fundamental setting of non-smooth convex optimization, which is the focus of this paper, bounded-gradient theory leaves a clear gap in the understanding of the advantages of Adam-type algorithms over other first-order methods. This motivates the search for a suitable relaxation or generalization of the bounded-gradient assumption, together with an appropriate convergence theory capable of closing this gap.

\textbf{Summary of contributions.} Motivated by the above background, we present the following main contributions.
\begin{enumerate}[label={\bf(\roman{*})}]
  \item In \Cref{sec:M01}, we introduce the class of $(\cM_0,\cM_1)$-Lipschitz, or generalized Lipschitz, functions and derive their basic properties.
  \item In \Cref{sec:basic_alg}, we analyze the convergence of several deterministic and stochastic first-order methods for convex generalized Lipschitz functions. The results are summarized in \Cref{tab:GM01} and highlight the limitations of SGD and AdaGrad for stochastic generalized Lipschitz problems.
  \item In \Cref{sec:alg}, we show that scalar-stepsize AdamW with clipped updates and exponential decay significantly improves upon the convergence guarantees for SGD and AdaGrad obtained in \Cref{sec:basic_alg} for stochastic convex generalized Lipschitz problems.
  \item In \Cref{sec:lower}, we establish lower complexity bounds for SGD and AdaGrad on stochastic convex generalized Lipschitz problems, showing that the improvement achieved by scalar-stepsize AdamW with clipped updates is genuine rather than an artifact of the analysis.
  \item In \Cref{sec:uni}, we show that clipped scalar-stepsize AdamW is also effective under the generalized H\"older smoothness assumption, which interpolates between $(\cM_0,\cM_1)$-Lipschitzness and the popular $(\cL_0,\cL_1)$-smoothness assumption \citep{zhang2019gradient}.
  \item In \Cref{sec:quasar}, we extend our results to the quasar-convex setting \citep{hinder2020near}, which relaxes the standard convexity assumption.
  \item In \Cref{sec:preconditioning}, we show that our results are not specific to scalar stepsizes and can be extended to diagonal and matrix preconditioners.
\end{enumerate}

\textbf{Notations.}
In this paper we use the following notation:
$\norm{}$ denotes the standard Euclidean norm, induced by the standard inner product $\<,>$;
$\normi{}$ denotes the infinity vector norm;
$\normo{}$ denotes the matrix spectral norm;
for $x \in \Rd$, $\clip[2]{x}$ denotes the Euclidean norm clipping with unit radius, i.e., $\clip[2]{x} = x\cdot \min\brf{1,1/\norm{x}}$, and $\clip[\infty]{x}$ denotes the coordinate-wise clipping with unit radius;
$\odot$ and $\oslash$ denote elementwise multiplication and division, respectively;
$\cO\brs{}$ and $\Omega\brs{}$ hide universal constant multiplicative factors;
$\tilde\cO\brs{}$ hides universal constant and polylogarithmic multiplicative factors, depending on the assumption constants and target accuracy;
$\N$ and $\N_0$ denote the sets of positive and non-negative integers, respectively.
We also use the conventions $1/0 = +\infty$ and $0/0 = 0$; whenever an expression defines a function $\psi(t)$ for $t > 0$ that admits a continuous extension to $t = 0$, we set $\psi(0) = \lim_{t\to +0}\psi(t)$.

\section{Generalized Lipschitz Optimization}\label{sec:M01}

Throughout this paper, we focus on the problem of minimizing a locally Lipschitz, possibly non-smooth, objective function $f(x)\colon \Rd \to \R$. We assume that at least one global solution $x^* \in \Rd$ exists and denote the corresponding optimal function value as $f^* = f(x^*)$. We also assume that $\norm{x^*} \leq \cR$ for some positive constant $\cR > 0$.

\subsection{Generalized Lipschitz Functions}

As discussed in \Cref{sec:intro}, assuming global Lipschitz continuity is restrictive. Therefore, we introduce a relaxed condition, which we refer to as $(\cM_0,\cM_1)$-Lipschitzness or generalized Lipschitzness. In the case where the objective function $f(x)$ is differentiable, this condition can be stated as follows:
\begin{equation}\label[condition]{eq:M01_grad}
  \norm{\nabla f(x)} \leq \cM_0 + \cM_1\brs{f(x) - f^*}
  \quad\text{for all}\;\; x \in \Rd,
\end{equation}
which clearly generalizes the standard bounded gradients assumption. In the case where the function $f(x)$ is not differentiable, we provide even more general \Cref{ass:M01}.
\begin{assumption}<ass:M01>
  The function $f(x)$ is $(\cM_0,\cM_1)$-Lipschitz. That is, the following inequality holds:
  \begin{equation}
    \tlimsup_{u \to 0}\smash{\tmfrac{1}{\norm{u}}}\abs{f(x + u) - f(x)}
    \leq \cM_0 + \cM_1\brs{f(x) - f^*}
    \quad\text{for all}\;\; x \in \R^d.
  \end{equation}
\end{assumption}

\textbf{Motivation.} Our main theoretical motivation for \cref{eq:M01_grad} comes from its parallel with the following $(\cL_0,\cL_1)$-smoothness condition, also known as generalized smoothness, introduced by \citet{zhang2019gradient} for twice continuously differentiable functions:
\begin{equation}\label[condition]{eq:L01_grad}
  \normo{\nabla^2 f(x)} \leq \cL_0 + \cL_1 \norm{\nabla f(x)}
  \quad\text{for all}\;\; x \in \Rd.
\end{equation}
This condition was introduced to relax the standard smoothness assumption commonly used in smooth first-order optimization \citep{nesterov2018lectures}, namely, the global Lipschitz continuity of the gradient. Observe that \cref{eq:L01_grad} bounds the second derivatives by an affine function of the first derivatives. This suggests an analogous condition for non-smooth first-order optimization: reducing the order of the derivatives by one and recalling that the zeroth derivative is the function itself leads precisely to \cref{eq:M01_grad}. Additionally, Lemma~2.2 of \citet{gorbunov2024methods} shows that the generalized smoothness implies the more general bound
\begin{equation}\label{eq:gorbunov}
  \norm{\nabla f(x)} \leq \cO\big[\mysqrt{\cL_0\brs{f(x) - f^*}} + \cL_1\brs{f(x) - f^*}\big].
\end{equation}
On the other hand, standard global $\cL_0$-smoothness and $\cM_0$-Lipschitzness imply, respectively, $\norm{\nabla f(x)} \leq \cO\big[\mysqrt{\cL_0\brs{f(x) - f^*}}\big]$ and $\norm{\nabla f(x)} \leq \cM_0$. Thus, replacing the ``$\cL_0$-part'' in \cref{eq:gorbunov} with its ``$\cM_0$-analogue'' leads to our \cref{eq:M01_grad}. This observation also motivates the generalized H\"older smoothness condition studied in \Cref{sec:uni}, which interpolates between \cref{eq:M01_grad,eq:L01_grad}.

\textbf{Basic examples.} To show formally that \Cref{ass:M01} strictly generalizes the standard global Lipschitz continuity assumption, we provide two simple examples. Let $p > 1$, $A \in \R^{n\times d}$, and $b \in \R^n$. Then, the function $f(x) = \normi{Ax - b}^p$ is $(\cM_0,\cM_1)$-Lipschitz with arbitrary $\cM_1 > 0$ and $\cM_0 = \normo{A}^p\brr{\frac{p-1}{\cM_1}}^{p-1} + \cM_1 f^*$, and the function $f(x) = \exp(\normi{Ax - b})$ is $(\cM_0,\cM_1)$-Lipschitz with $\cM_1 = \normo{A}$ and $\cM_0 = \cM_1 f^*$.

\subsection{Basic Properties of Generalized Lipschitz Functions}\label{sec:M01_properties}

In this section, we derive the basic properties of generalized Lipschitz functions. We start with the equivalent characterization of $(\cM_0,\cM_1)$-Lipschitz functions in the following \Cref{lem:M01}.

\begin{lemma}<lem:M01>
  \Cref{ass:M01} holds if and only if the following inequality holds for all $x,x' \in \Rd$:
  \begin{equation}\label{eq:M01}
    \begin{aligned}
      \abs{f(x') - f(x)} &\leq
      \brr*{\brs{\cM_0/\cM_1} + \brs*{f(x) - f^*}}\brs{\exp\brr{\cM_1\norm{x'-x}} - 1}
      \\&\leq
      \brr*{\cM_0 + \cM_1\brs*{f(x) - f^*}}\exp\brr{\cM_1\norm{x'-x}}\norm{x'-x}
    \end{aligned}
  \end{equation}
\end{lemma}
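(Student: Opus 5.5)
We prove both directions, starting with the nontrivial one: \Cref{ass:M01} implies \cref{eq:M01}. The second inequality in \cref{eq:M01} is elementary. With $t = \cM_1\norm{x'-x}\ge 0$ we have $e^t - 1 \le t e^t$, and multiplying by $\cM_0/\cM_1 + (f(x)-f^*)$ gives it. So the real work is the first inequality.

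\textbf{Reduction to one dimension.} Fix $x,x'$, set $L = \norm{x'-x}$, and let $\phi(s) = f(x + s(x'-x)/L) - f^*$ for $s\in[0,L]$. Then $\phi$ is locally Lipschitz, hence absolutely continuous, since $f$ is locally Lipschitz. \Cref{ass:M01} gives a bound on the upper Dini derivative of $\abs{\phi}$:
\[
\tlimsup_{h\to 0}\abs{\phi(s+h)-\phi(s)}/\abs{h} \le \cM_0 + \cM_1\phi(s).
\]
Put $\psi(s) = \cM_0/\cM_1 + \phi(s) \ge \cM_0/\cM_1 > 0$, using $\phi\ge 0$. Then $\psi$ is absolutely continuous and $\abs{\psi'(s)} \le \cM_1\psi(s)$ almost everywhere.

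\textbf{Gr\"onwall step.} Since $\psi>0$, the function $\log\psi$ is absolutely continuous with $\abs{(\log\psi)'}\le \cM_1$ a.e. Integrating gives $\psi(s)\le\psi(0)e^{\cM_1 s}$ and $\psi(s)\ge\psi(0)e^{-\cM_1 s}$. At $s = L$ this yields
\[
\psi(L)-\psi(0)\le \psi(0)(e^{\cM_1 L}-1)
\quad\text{and}\quad
\psi(0)-\psi(L)\le \psi(0)(1-e^{-\cM_1 L})\le \psi(0)(e^{\cM_1 L}-1).
\]
Because $\psi(L)-\psi(0) = f(x')-f(x)$ and $\psi(0) = \cM_0/\cM_1 + f(x)-f^*$, this is exactly the first inequality in \cref{eq:M01}.

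\textbf{Degenerate cases.} The case $\cM_1 = 0$ is handled by the paper's continuous-extension convention: the right-hand side becomes $\cM_0 L$, which is the ordinary Lipschitz bound and follows directly from integrating $\abs{\phi'}\le\cM_0$. If $\cM_0 = 0$ and $\psi(0)=0$, the positivity of $\psi$ fails, so we run the argument with $\cM_0$ replaced by $\cM_0+\epsilon$ and let $\epsilon\to 0$.

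\textbf{Converse.} Assume \cref{eq:M01} holds. Put $x' = x+u$, divide by $\norm{u}$, and let $u\to 0$. Since $e^{\cM_1\norm{u}}\to 1$, the $\limsup$ of the right-hand side of the second line is $\cM_0+\cM_1(f(x)-f^*)$, which is \Cref{ass:M01}.

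\textbf{Main obstacle.} The delicate point is making the Gr\"onwall step rigorous from a Dini-derivative bound rather than a true derivative. Local Lipschitzness is what rescues us: it gives a.e.\ differentiability, so the a.e.\ derivative is bounded by the $\limsup$, and then integration via absolute continuity is legitimate. Without this, one would instead use a comparison or maximal-point argument on $e^{-\cM_1 s}\psi(s)$, which I would keep as a backup.
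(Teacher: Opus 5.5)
Your proof is correct. It reaches the first inequality by a different technical route from the paper's.

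\textbf{How the paper argues.} The paper never differentiates $f$ along the segment. It introduces the integral majorant $q(t)=\|x'-x\|\int_0^t(\cM_0+\cM_1p(\tau))\,d\tau$ and shows that $r(t)=|p(t)-p(0)|-q(t)$ has nonpositive upper Dini derivative everywhere. The Dini monotonicity theorem then gives $|p(t)-p(0)|\le q(t)$. Only after that does it run Gr\"onwall, on the $C^1$ function $q$, via $\dot q\le\|x'-x\|\,(\cM_0+\cM_1(p(0)+q))$.

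\textbf{How your route differs.} You use the standing local-Lipschitz hypothesis to get absolute continuity and a.e.\ differentiability, and then apply Gr\"onwall directly to $\log\psi$.

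\textbf{What each buys.} Your argument is shorter. It also gives a two-sided multiplicative bound $e^{-\cM_1L}\le\psi(L)/\psi(0)\le e^{\cM_1L}$, so the decrease $f(x)-f(x')$ is controlled by the sharper factor $1-e^{-\cM_1L}$. The paper's route needs only continuity of $p$, which \Cref{ass:M01} itself forces. So it relies neither on local Lipschitzness nor on measure-theoretic differentiation, and it would survive without the paper's standing assumption.

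\textbf{Degenerate cases.} Your explicit handling of these is a small improvement. The paper's logarithm $\ln[\cM_0+\cM_1(p(0)+q(t))]$ is undefined at $t=0$ when $\cM_0=0$ and $f(x)=f^*$, and the paper does not address this.

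\textbf{One minor imprecision.} When $\cM_0=0$, $\psi=\phi$ can vanish at interior points of the segment even if $\psi(0)>0$, for example when the segment passes through a minimizer. So the $\epsilon$-perturbation should be invoked whenever $\cM_0=0$, not only when $\psi(0)=0$. Your remedy already covers this case; only its stated trigger should be widened.
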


In the case $\cM_1 = 0$, the inequality in \cref{eq:M01} matches the standard definition of Lipschitz functions, where the function value difference $\abs{f(x') - f(x)}$ is bounded by a multiple of the distance between the arguments $\norm{x' - x}$. However, in the general case $\cM_1 > 0$, the bound includes an additional exponential factor $\exp\brr{\cM_1\norm{x' - x}}$. This factor cannot be removed in general, which can be illustrated by the basic examples mentioned above.

\Cref{lem:M01} also raises the question of whether the generalized Lipschitzness condition offers any advantage over the standard global Lipschitz continuity. Indeed, one could consider minimizing the objective function $f(x)$ over the ball $Q_\cR = \brf{x : \norm{x} \leq \cR}$ of radius $\cR$ centered at the origin, which is equivalent to the original problem due to $x^* \in Q_\cR$. This approach is common, for instance, in adaptive methods such as AdaGrad, where projection steps on $Q_\cR$ are used to ensure the almost sure boundedness of the iterates \citep{duchi2011adaptive}, which is necessary for convergence guarantees. \Cref{lem:M01} implies that the function $f(x)$ is globally $\cM$-Lipschitz on $Q_\cR$, where
\begin{equation}\label{eq:MF}
  \cM = \cM_0 + \cM_1 \cF,\qquad
  \cF = \tmax_{x \in Q_\cR} \brs{f(x) - f^*}.
\end{equation}
However, this approach implies convergence rates with extra exponential factors, since the constants $\cM$ and $\cF$ may scale exponentially, according to \Cref{lem:M01} and the discussion following it:
\begin{equation}\label{eq:MF_bound}
  \cM \leq \cM_0\exp\brr{\cO\brs{\cR\cM_1}},
  \qquad
  \cF \leq \brs{\cM_0/\cM_1}\brs{\exp\brr{\cO\brs{\cR\cM_1}} - 1}.
\end{equation}
Consequently, such convergence rates depending on $\cM$ and $\cF$ can be overly pessimistic, especially when $\cR\cM_1 \gg 1$. In the upcoming sections, we will show that \Cref{ass:M01} can be utilized more directly, leading to fast convergence rates without any exponential factors.

\section{Algorithms for Generalized Lipschitz Convex Optimization}\label{sec:basic_alg}

Throughout most of this paper, we focus on the case where the objective function $f(x)$ is convex. By standard results in convex analysis \citep{rockafellar1997convex}, the subdifferential $\partial f(x)$ is non-empty at every point $x \in \Rd$. We denote an arbitrary subgradient by $\nabla f(x) \in \partial f(x)$ and, with a slight abuse of language, refer to it as the gradient of the function $f(x)$ at the point $x \in \Rd$. Consequently, it is easy to verify that \Cref{ass:M01} holds if and only if the inequality in \cref{eq:M01_grad} holds.

\begin{table}[t]
  \centering
  \caption{Summary of the main complexity results for solving non-stochastic and stochastic generalized Lipschitz convex optimization problems (\Cref{sec:M01,sec:alg}). Universal and logarithmic constants are omitted. The conditions $\cR\cM_1 \geq 1$ and $\cR\cG_1 \geq 1$ are assumed.}
  \label{tab:GM01}
  \begin{NiceTabular}{|c|c|r|}
    \toprule
    \bf Algorithm & \bf Theorem & \Block[c]{1-1}{\bf Iteration Complexity} \\
    \midrule
    \Block[c]{1-3}{\bf Generalized Lipschitz Problems (\Cref{ass:M01})}&&\\
    \midrule
    GD & \Cref{cor:GD} & $\exp\brr{\cO\brs{\cR\cM_1}} \cdot \brs{\cR\cM_0 / \epsilon} + \brs{\cR\cM_0 / \epsilon}^2$\\
    \midrule
    Normalized/Polyak GD & \Cref{thm:normalized_polyak} & $\brs{\cR\cM_1}^2 + \brs{\cR\cM_0/\epsilon}^2$\\
    \midrule
    \Block[c]{1-3}{\bf Stochastic Generalized Lipschitz Problems (\Cref{ass:G01})}&&\\
    \midrule
    SGD & \Cref{thm:SGD} & $\exp\brr{\cO\brs{\cR\cG_1}} \cdot \brs{\cR\cG_0 / \epsilon} + \brs{\cR\cG_0 / \epsilon}^2$\\
    \midrule
    AdaGrad-Norm & \Cref{thm:AdaGrad} & $\exp\brr{\cO\brs{\cR\cG_1}} \cdot \brs{\cR\cG_0 / \epsilon} + \brs{\cR\cG_0 / \epsilon}^2$\\
    \midrule
    Clipped AdamW & \Cref{cor:exp2} & $\brs{\cR\cG_1}^4 + \brs{\cR\cG_0 / \epsilon}^2$\\
    \bottomrule
  \end{NiceTabular}
\end{table}

\subsection{Deterministic Algorithms}\label{sec:det}

In this section, we analyze the convergence of projected gradient descent (GD) under \Cref{ass:M01}. This algorithm iterates according to the following update rule:
\begin{equation}\label{eq:GD}
  x_{k+1} = \proj[Q_\cR]{x_k - \eta_k \nabla f(x_k)},\quad x_0 = 0,
\end{equation}
where $\eta_k > 0$ is a possibly time-varying stepsize. We first consider the constant-stepsize case in \Cref{thm:GD}. Then, using the upper bound on $\cF$ in \cref{eq:MF_bound}, we derive in \Cref{cor:GD} an explicit iteration complexity in terms of the constants $\cR$, $\cM_0$, $\cM_1$, and the target accuracy $\epsilon$.

\begin{theorem}<thm:GD>
  Let \Cref{ass:M01} hold and let the iterates $\brf{x_k}_{k=0}^K$ be generated by \cref{eq:GD}. Let the stepsizes $\eta_k > 0$ and the number of iterations $K \in \N$ be defined as follows:
  \begin{equation}\label{eq:GD_params}
    1/\eta_k = \max\brf{2\cM_1^2\cF, \cM_0\mysqrt{K+1}/\cR},\qquad K = \ceil*{\brs{\cR\cM_1}^2 \cdot \brs{\cF /\epsilon} + \brs{\cR\cM_0 / \epsilon}^2}.
  \end{equation}
  Then, the inequality $f(\ox_K) - f^* \leq \cO\brs{\epsilon}$ holds, where $\ox_K = \smash{\frac{1}{K+1}\sum_{k=0}^K x_k}$.
\end{theorem}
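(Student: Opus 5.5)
The plan is to run the standard projected subgradient analysis and then use \cref{eq:M01_grad} to turn the $\eta_k^2\norm{\nabla f(x_k)}^2$ error term into something that can be absorbed into the left-hand side. By non-expansiveness of $\proj[Q_\cR]{}$ and $x^* \in Q_\cR$, we have
\begin{equation*}
  \norm{x_{k+1}-x^*}^2 \leq \norm{x_k - x^*}^2 - 2\eta\<\nabla f(x_k), x_k - x^*> + \eta^2\norm{\nabla f(x_k)}^2 .
\end{equation*}
Here $\eta = \eta_k$ is constant in $k$. Convexity then gives $\<\nabla f(x_k),x_k-x^*> \geq f(x_k)-f^*$.

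The key step is bounding the gradient term. Since $x_k \in Q_\cR$ for all $k$ (with $x_0 = 0$), we have $0 \le f(x_k)-f^* \le \cF$. Using \cref{eq:M01_grad} and $(a+b)^2\le 2a^2+2b^2$,
\begin{equation*}
  \norm{\nabla f(x_k)}^2 \leq 2\cM_0^2 + 2\cM_1^2\brs{f(x_k)-f^*}^2 \leq 2\cM_0^2 + 2\cM_1^2\cF\brs{f(x_k)-f^*}.
\end{equation*}
Because $1/\eta \geq 2\cM_1^2\cF$, we get $\eta^2\cdot 2\cM_1^2\cF\brs{f(x_k)-f^*} \le \eta\brs{f(x_k)-f^*}$. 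This absorbs half of the descent term, leaving
\begin{equation*}
  \eta\brs{f(x_k)-f^*} \le \norm{x_k-x^*}^2-\norm{x_{k+1}-x^*}^2 + 2\eta^2\cM_0^2 .
\end{equation*}
The constant $2$ may need adjusting depending on how the factors in \cref{eq:GD_params} were tuned, but any universal constant is absorbed into $\cO$.

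Next, telescope over $k=0,\dots,K$, use $\norm{x_0-x^*}\le\cR$, and apply Jensen to $\ox_K$:
\begin{equation*}
  f(\ox_K)-f^* \le \frac{\cR^2}{\eta(K+1)} + 2\eta\cM_0^2 .
\end{equation*}
Then split on the maximum in $1/\eta$. The term $2\eta\cM_0^2 \le 2\cR\cM_0/\mysqrt{K+1}$ always holds. The first term satisfies $\cR^2/(\eta(K+1)) \le 2\cR^2\cM_1^2\cF/(K+1) + \cR\cM_0/\mysqrt{K+1}$. With $K+1 \ge \brs{\cR\cM_1}^2\cF/\epsilon$, the first piece is at most $2\epsilon$. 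With $K+1\ge \brs{\cR\cM_0/\epsilon}^2$, the $\mysqrt{K+1}$ pieces are at most $\cO(\epsilon)$.

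The main obstacle is the absorption step. It needs the a priori bound $f(x_k)-f^*\le\cF$ along the trajectory, which is exactly why the projection onto $Q_\cR$ and the definition of $\cF$ in \cref{eq:MF} are used. It also needs the stepsize to be small enough to make the quadratic-in-gap term dominated by the linear descent term. Everything else is routine.
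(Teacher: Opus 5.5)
Your proposal is correct and follows the paper's proof essentially line for line: non-expansiveness of the projection, convexity, the bound $\sqn{\nabla f(x_k)} \le 2\cM_0^2 + 2\cM_1^2\cF\brs{f(x_k)-f^*}$ (valid because $x_k\in Q_\cR$), absorption using $1/\eta \ge 2\cM_1^2\cF$, telescoping to $\cR^2/(\eta(K+1)) + 2\eta\cM_0^2$, and substituting $\eta$ and $K$. Your hedge about possibly adjusting the constant $2$ is unnecessary: with the stepsize in \cref{eq:GD_params}, the absorption works exactly as you wrote it and gives a final bound of $5\epsilon$.
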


\begin{corollary}<cor:GD>
  Under the conditions of \Cref{thm:GD}, it is sufficient to perform the following number of iterations to reach the precision $f(\ox_K) - f^* \leq \epsilon$:
  \begin{equation}
    K = \cO\brs*{
      \exp\brr{\cO\brs{\cR\cM_1}} \cdot \brs{\cR\cM_0 / \epsilon} + \brs{\cR\cM_0 / \epsilon}^2
    }.
  \end{equation}
\end{corollary}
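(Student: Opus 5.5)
The corollary follows by substituting the bound on $\cF$ from \cref{eq:MF_bound} into the iteration count of \Cref{thm:GD}. By that theorem, with $K = \ceil{\brs{\cR\cM_1}^2\brs{\cF/\epsilon} + \brs{\cR\cM_0/\epsilon}^2}$ we get $f(\ox_K) - f^* \leq C\epsilon$ for a universal constant $C$. Running the theorem with target accuracy $\epsilon/C$ in place of $\epsilon$ changes $K$ only by a universal constant factor. Hence it suffices to bound $\brs{\cR\cM_1}^2\cF/\epsilon$ by $\exp\brr{\cO\brs{\cR\cM_1}}\cdot\cR\cM_0/\epsilon$. The term $\brs{\cR\cM_0/\epsilon}^2$ already appears in the target expression, and the ceiling contributes at most one extra iteration, which is absorbed.

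To bound $\cF$, I will derive \cref{eq:MF_bound} directly from \Cref{lem:M01}, applied with $x = x^*$ (so that $f(x) - f^* = 0$) and $x' \in Q_\cR$. Since $\norm{x' - x^*} \leq 2\cR$, this gives
\[
\cF \leq \brs{\cM_0/\cM_1}\brs{\exp\brr{2\cR\cM_1} - 1}.
\]
Substituting yields
\[
\brs{\cR\cM_1}^2\cF/\epsilon \leq \brs{\cR\cM_0/\epsilon}\cdot \cR\cM_1\brs{\exp\brr{2\cR\cM_1} - 1}.
\]
It remains to absorb the polynomial prefactor: $t\brs{e^{2t}-1} \leq e^{3t}$ for all $t \geq 0$, since $t \leq e^{t}$. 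Setting $t = \cR\cM_1$ gives $\brs{\cR\cM_1}^2\cF/\epsilon \leq \exp\brr{3\cR\cM_1}\cdot\cR\cM_0/\epsilon$. The case $\cM_1 = 0$ is covered by the conventions $0/0=0$ and continuous extension: there $\cF \leq 2\cR\cM_0$, and the first term vanishes.

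No step is a real obstacle; the only care needed is the accuracy rescaling by the universal constant $C$ hidden in \Cref{thm:GD}, together with confirming that the $\cO$ inside the exponent absorbs both the factor $2$ in $2\cR$ and the polynomial factor $\cR\cM_1$. Combining the pieces gives $K = \cO\brs{\exp\brr{\cO\brs{\cR\cM_1}}\cdot\brs{\cR\cM_0/\epsilon} + \brs{\cR\cM_0/\epsilon}^2}$, as claimed.
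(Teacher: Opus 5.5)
Your proposal is correct and takes the same route as the paper, which obtains \Cref{cor:GD} simply by substituting the bound $\cF \leq \brs{\cM_0/\cM_1}\brs{\exp\brr{\cO\brs{\cR\cM_1}} - 1}$ from \cref{eq:MF_bound} into the iteration count of \Cref{thm:GD}. Your derivation of that bound from \Cref{lem:M01} at $x = x^*$ with $\norm{x'-x^*}\leq 2\cR$, the absorption $t\brs{e^{2t}-1}\leq e^{3t}$, and the rescaling of $\epsilon$ by the universal constant hidden in \Cref{thm:GD} fill in details the paper leaves implicit.
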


The complexity bound in \Cref{cor:GD} improves upon the iteration complexity that could be obtained by naively using the global $\cM$-Lipschitzness of the function $f(x)$ over $Q_\cR$, as discussed in \Cref{sec:M01_properties}. It also admits the following informal interpretation: the bound in \cref{eq:GD_rate} has an initial regime with the sublinear rate $\cO\brs{[\cR\cM_1]^2\cF/K}$ lasting up to $K_\text{init} = \cO\brs{\brs{\cR\cM_1}^2 \cdot \brs{\cM_1\cF/\cM_0}^2}$ iterations, followed by the slower rate $\cO\brs{\cR\cM_0/\mysqrt{K}}$, matching the standard rate of GD for $\cM_0$-Lipschitz convex functions. Although the latter rate cannot be improved in general due to the standard lower complexity bounds \citep{nesterov2018lectures}, the initial phase before reaching it can be exponentially long: by \cref{eq:MF_bound}, $K_\text{init} = \cO\brs{\exp\brr{\cO\brs{\cR\cM_1}}}$. Thus, despite the improvement over the naive global-Lipschitz reduction, the overall iteration complexity can still be poor, especially when $\cR\cM_1 \gg 1$.

It turns out that we can further improve the convergence rate of GD by utilizing adaptive stepsizes. In particular, we consider normalized GD \citep{shor2012minimization} and GD with the Polyak stepsizes \citep{polyak1969minimization}, Option~A and Option~B in \cref{eq:normalized_polyak_eta}, respectively. By \Cref{thm:normalized_polyak}, both methods require only $K_{\text{init}} = \cO\brs{\brs{\cR\cM_1}^2}$ initial iterations before reaching the optimal rate $\cO\brs{\cR\cM_0 / \mysqrt{K+1}}$. Consequently, the overall iteration complexity of normalized GD and Polyak GD contains no exponential factors and significantly improves upon the nonadaptive GD rate in \Cref{thm:GD}.

\begin{theorem}<thm:normalized_polyak>
  Let \Cref{ass:M01} hold and let the iterates $\brf{x_k}_{k=0}^K$ be generated by \cref{eq:GD}. Let the stepsizes $\eta_k > 0$ be defined according to the following options:
  \begin{equation}\label{eq:normalized_polyak_eta}
    \text{Option A:}\;\;
    \eta_k = \cR/\brs{\norm{\nabla f(x_k)}\mysqrt{K+1}}
    ,\quad\text{Option B:}\;\;
    \eta_k = \brs{f(x_k) - f^*}/\sqn{\nabla f(x_k)},
  \end{equation}
  and let the number of iterations $K \in \N$ be defined as follows:
  \begin{equation}\label{eq:normalized_polyak_K}
    K = 4\cdot \ceil*{\brs{\cR\cM_1}^2 + \brs{\cR\cM_0/\epsilon}^2}.
  \end{equation}
  Then, the inequality $\min_{k \in \brf{0,\dots,K}}\brs*{f(x_k) - f^*} \leq \epsilon$ holds.
\end{theorem}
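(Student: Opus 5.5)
The plan is to start from the standard one-step inequality for projected subgradient descent and turn it into a bound on a weighted sum of function gaps. Since $x^* \in Q_\cR$ and projection onto $Q_\cR$ is nonexpansive, convexity gives, for each $k$,
\begin{equation*}
  \sqn{x_{k+1} - x^*} \leq \sqn{x_k - x^*} - 2\eta_k\brs{f(x_k) - f^*} + \eta_k^2\sqn{\nabla f(x_k)}.
\end{equation*}
Write $\delta_k = f(x_k) - f^*$ and $g_k = \norm{\nabla f(x_k)}$. For convex $f$, \Cref{ass:M01} is equivalent to $g_k \leq \cM_0 + \cM_1\delta_k$, and this is the only way the assumption will be used. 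Note that $\sqn{x_0 - x^*} = \sqn{x^*} \leq \cR^2$. If some $g_k = 0$, then $x_k$ is optimal and the claim is immediate, so we may assume $g_k > 0$ for all $k$. Throughout, the argument is by contradiction: suppose $\delta_k > \epsilon$ for every $k \in \brf{0,\dots,K}$.

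\emph{Option A.} Here $\eta_k g_k = \cR/\mysqrt{K+1}$. Summing the one-step inequality over $k=0,\dots,K$ gives $\sum_k (2\cR/\mysqrt{K+1})\,\delta_k/g_k \leq \cR^2 + \cR^2 = 2\cR^2$, that is,
\begin{equation*}
  \tsum_{k=0}^K \delta_k/g_k \leq \cR\mysqrt{K+1}.
\end{equation*}
The key step is to lower bound each ratio using the assumption. Since $\delta\mapsto \delta/(\cM_0 + \cM_1\delta)$ is increasing and $\delta_k > \epsilon$, we get $\delta_k/g_k \geq \delta_k/(\cM_0 + \cM_1\delta_k) > \epsilon/(\cM_0 + \cM_1\epsilon)$. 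Hence $(K+1)\,\epsilon/(\cM_0+\cM_1\epsilon) < \cR\mysqrt{K+1}$, which rearranges to $\mysqrt{K+1} < \cR\cM_0/\epsilon + \cR\cM_1$. Squaring and using $(a+b)^2 \leq 2a^2+2b^2$ gives $K+1 < 2\brs{(\cR\cM_0/\epsilon)^2 + (\cR\cM_1)^2}$. This contradicts the choice of $K$ in \cref{eq:normalized_polyak_K}.

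\emph{Option B.} Substituting the Polyak stepsize into the one-step inequality gives $\sqn{x_{k+1}-x^*} \leq \sqn{x_k - x^*} - \delta_k^2/g_k^2$. Telescoping yields $\sum_k (\delta_k/g_k)^2 \leq \cR^2$. By the same monotonicity bound, each term exceeds $\brs{\epsilon/(\cM_0+\cM_1\epsilon)}^2$. Therefore $K+1 < \cR^2(\cM_0/\epsilon+\cM_1)^2 \leq 2\brs{(\cR\cM_0/\epsilon)^2 + (\cR\cM_1)^2}$, which again contradicts the choice of $K$.

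The only step that needs care is the monotonicity argument converting $\delta_k > \epsilon$ into a uniform lower bound on $\delta_k/g_k$. It is also the conceptual point of the result. The normalization in both options makes the progress per step depend only on the ratio $\delta_k/g_k$, and \Cref{ass:M01} bounds this ratio below by roughly $\min\brf{\epsilon/\cM_0, 1/\cM_1}$. This is why the bound contains no factor $\cF$ and no exponential terms. The iterates still remain in $Q_\cR$ because of the projection, but this fact is not needed anywhere in the argument.
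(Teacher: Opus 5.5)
Your proof is correct and follows the same route as the paper: the projected-subgradient one-step inequality, telescoping with $\|x_0 - x^*\| \leq \cR$, and a uniform lower bound on $\delta_k/g_k$ from $g_k \leq \cM_0 + \cM_1\delta_k$. The only difference is bookkeeping. You argue by contradiction using monotonicity of $\delta \mapsto \delta/(\cM_0 + \cM_1\delta)$ at the threshold $\epsilon$; when $\cM_0 = 0$ this gives $\geq$ rather than $>$, which still yields the contradiction. The paper instead uses $\delta/(\cM_0+\cM_1\delta) \geq \min\{\delta/(2\cM_0), 1/(2\cM_1)\}$ to get $\min\{\min_k \delta_k, \cM_0/\cM_1\} \leq 2\cR\cM_0/\sqrt{K+1}$, and then rules out the second branch via the choice of $K$.
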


\subsection{Stochastic Algorithms}\label{sec:stoch}

In this section, we analyze the convergence of projected stochastic gradient descent (SGD):
\begin{equation}\label{eq:SGD}
  x_{k+1} = \proj[Q_\cR]{x_k - \eta_k \nabla_{\xi_k} f(x_k)},\quad x_0 = 0,
\end{equation}
where $\xi_k \sim \cD$ are independent samples and $\nabla_\xi f(x)$ is the stochastic gradient estimator defined in \Cref{ass:G01}, a stochastic version of \Cref{ass:M01}. The unbiasedness requirement is standard in stochastic first-order methods. The decomposition in \cref{eq:G01_grad} can be viewed as a stochastic analogue of \cref{eq:M01_grad}; in fact, it implies \cref{eq:M01_grad} with $\cM_0 = \cG_0$ and $\cM_1 = \cG_1$, as stated in \Cref{rem:GM01}. For technical reasons, the condition in \cref{eq:G01_grad} is slightly stronger than merely imposing an upper bound, analogous to \cref{eq:M01_grad}, on the root second moment $\mysqrt{\E{\norm{\nabla_\xi f(x)}^2}}$. Nevertheless, it remains substantially more general than the standard bounded-second-moment assumption recovered by setting $\cG_1 = 0$, which implies bounded full gradients and is commonly used in the stochastic non-smooth convex optimization literature \citep{nemirovski2009robust,bubeck2015convex}.

\begin{assumption}<ass:G01>
  There exists a stochastic estimator $\nabla_\xi f(x)$ of the gradient $\nabla f(x)$, where $\xi \sim \cD$ is a random variable sampled from the distribution $\cD$. The gradient estimator $\nabla_\xi f(x)$ is unbiased, i.e., $\E[\xi \sim \cD]{\nabla_\xi f(x)} = \nabla f(x)$, and allows the decomposition $\nabla_\xi f(x) = u_\xi(x) + v_\xi(x)$, satisfying
  \begin{equation}\label{eq:G01_grad}
    \E[\xi \sim \cD]{\sqn{u_\xi(x)}}\leq \cG_0^2
    \quad\text{and}\quad
    \norm{v_\xi(x)} \las \cG_1\brs{f(x) - f^*},
    \quad\text{where\;\;}
    \cG_0,\cG_1 \geq 0.
  \end{equation}
\end{assumption}

\begin{remark}<rem:GM01>
  \Cref{ass:G01} implies \Cref{ass:M01} with $\cM_0 = \cG_0$ and $\cM_1 = \cG_1$.
\end{remark}

Next, under \Cref{ass:G01}, we analyze SGD with constant stepsizes and AdaGrad-Norm stepsizes \citep{streeter2010less}, a scalar variant of AdaGrad. The corresponding results are stated in \Cref{thm:SGD,thm:AdaGrad}, respectively. The convergence rates for the two algorithms coincide. This is not particularly surprising: it is consistent with existing theory for stochastic convex globally Lipschitz optimization, where the rates for SGD and AdaGrad-Norm also coincide \citep{nemirovski2009robust,duchi2011adaptive}.

\begin{theorem}<thm:SGD>
  Let \Cref{ass:G01} hold and let the iterates $\brf{x_k}_{k=0}^K$ be generated by \cref{eq:SGD}.  Let the stepsizes $\eta_k > 0$ and the number of iterations $K \in \N$ be defined as follows:
  \begin{equation}\label{eq:SGD_params}
    \eta_k = 1/\max\brf{2\cG_1^2\cF, \cG_0\mysqrt{K+1}/\cR}
    ,\quad
    K = \cO\brs*{
      \exp\brr{\cO\brs{\cR\cG_1}} \cdot \brs{\cR\cG_0 / \epsilon} + \brs{\cR\cG_0 / \epsilon}^2
    }.
  \end{equation}
  Then, the inequality $\E*{f(\ox_K) - f^*} \leq \epsilon$ holds, where $\ox_K = \smash{\frac{1}{K+1}\sum_{k=0}^K x_k}$.
\end{theorem}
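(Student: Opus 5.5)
I will follow the standard one-step analysis of projected SGD and replace the usual bounded second moment by the decomposition in \Cref{ass:G01}. Since $x^* \in Q_\cR$ and projection is nonexpansive,
\begin{equation*}
  \sqn{x_{k+1} - x^*} \leq \sqn{x_k - x^*} - 2\eta\<\nabla_{\xi_k} f(x_k), x_k - x^*> + \eta^2\sqn{\nabla_{\xi_k} f(x_k)}.
\end{equation*}
Let $\eta_k \equiv \eta$ denote the constant stepsize of \cref{eq:SGD_params}. Take the expectation conditional on $x_k$. Unbiasedness and convexity bound the inner-product term by $-2\eta\brs{f(x_k) - f^*}$. For the second moment, use $\sqn{u+v} \leq 2\sqn{u} + 2\sqn{v}$ to get
\begin{equation*}
  \E{\sqn{\nabla_{\xi_k} f(x_k)} \mid x_k} \leq 2\cG_0^2 + 2\cG_1^2\brs{f(x_k)-f^*}^2.
\end{equation*}
All iterates lie in $Q_\cR$, so $f(x_k) - f^* \leq \cF$. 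Hence the last term is at most $2\cG_1^2\cF\brs{f(x_k) - f^*}$. This is the key step: the $\cG_1$-part of the noise is absorbed into the descent term instead of being bounded by the exponentially large constant $\cM^2$ from \cref{eq:MF}.

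Next I use the stepsize. Since $\eta \leq 1/(2\cG_1^2\cF)$, we have $2\eta^2\cG_1^2\cF\brs{f(x_k)-f^*} \leq \eta\brs{f(x_k)-f^*}$. Therefore
\begin{equation*}
  \E{\sqn{x_{k+1}-x^*}} \leq \E{\sqn{x_k-x^*}} - \eta\,\E{f(x_k)-f^*} + 2\eta^2\cG_0^2.
\end{equation*}
Summing over $k=0,\dots,K$, using $\norm{x_0 - x^*} \leq \cR$, and applying Jensen's inequality to $\ox_K$ gives
\begin{equation*}
  \E{f(\ox_K)-f^*} \leq \frac{\cR^2}{\eta(K+1)} + 2\eta\cG_0^2 \leq \frac{2\cR^2\cG_1^2\cF}{K+1} + \cO\brs{\cR\cG_0/\mysqrt{K+1}},
\end{equation*}
where I used $1/\eta \leq 2\cG_1^2\cF + \cG_0\mysqrt{K+1}/\cR$ and $\eta \leq \cR/(\cG_0\mysqrt{K+1})$. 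This is the stochastic analogue of the bound behind \Cref{thm:GD}.

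Finally, I choose $K$ so that both terms are at most $\epsilon/2$. This requires $K \gtrsim \brs{\cR\cG_1}^2\cF/\epsilon + \brs{\cR\cG_0/\epsilon}^2$. By \Cref{rem:GM01}, $f$ is $(\cG_0,\cG_1)$-Lipschitz, so \cref{eq:MF_bound} gives $\cF \leq \brs{\cG_0/\cG_1}\brs{\exp\brr{\cO\brs{\cR\cG_1}}-1}$. Then
\begin{equation*}
  \brs{\cR\cG_1}^2\cF/\epsilon \leq \cR^2\cG_1\cG_0\exp\brr{\cO\brs{\cR\cG_1}}/\epsilon = \exp\brr{\cO\brs{\cR\cG_1}}\cdot\brs{\cR\cG_0/\epsilon},
\end{equation*}
since the factor $\cR\cG_1$ is absorbed into the exponential. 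When $\cG_1 = 0$ this term vanishes under the convention $0/0 = 0$, consistent with the limit convention in the notation. This yields the stated $K$.

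The main obstacle is the second-moment bound. A naive bound $\cG_1^2\cF^2$ would leave a term $\eta\cG_1^2\cF^2$ that is not controlled by the chosen $\eta$. Converting one factor of $f(x_k)-f^*$ into $\cF$, and keeping the other factor so the descent term can absorb it, is what makes the argument work. This requires the almost sure bound on $v_\xi$ and the fact that the projection keeps every iterate in $Q_\cR$.
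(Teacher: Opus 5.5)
Your proposal is correct and follows essentially the same route as the paper. You use the one-step projected-SGD inequality, split the stochastic gradient via $\|u+v\|^2\le 2\|u\|^2+2\|v\|^2$, bound one factor of $f(x_k)-f^*$ by $\cF$ so the $\cG_1$-term is absorbed into the descent term through $\eta\le 1/(2\cG_1^2\cF)$, sum with Jensen to get $\cR^2/(\eta(K+1))+2\eta\cG_0^2$, and finally use \Cref{rem:GM01} with \cref{eq:MF_bound} to turn $[\cR\cG_1]^2\cF/\epsilon$ into the exponential term, all exactly as the paper does (your factor-of-two normalization of the squared distance is the only difference, and it is cosmetic).
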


\begin{theorem}<thm:AdaGrad>
  Let \Cref{ass:G01} hold and let the iterates $\brf{x_k}_{k=0}^K$ be generated by \cref{eq:SGD}.  Let the stepsizes $\eta_k > 0$ and the number of iterations $K \in \N$ be defined as follows:
  \begin{equation}\label{eq:AdaGrad_params}
    \eta_k = \cR/\mysqrt{2\tsum_{i=0}^{k}\sqn{\nabla_{\xi_i}f(x_i)}}
    ,\quad
    K = \cO\brs*{
      \exp\brr{\cO\brs{\cR\cG_1}} \cdot \brs{\cR\cG_0 / \epsilon} + \brs{\cR\cG_0 / \epsilon}^2
    }.
  \end{equation}
  Then, the inequality $\E*{f(\ox_K) - f^*} \leq \epsilon$ holds, where $\ox_K = \smash{\frac{1}{K+1}\sum_{k=0}^K x_k}$.
\end{theorem}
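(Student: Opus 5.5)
The plan is the standard projected online-gradient analysis with adaptive stepsizes. A separate argument is needed for the part of the gradient that grows with the optimality gap, because that part can be large.

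\textbf{Step 1: regret inequality.} Since $x^*\in Q_\cR$, nonexpansiveness of the projection gives
\[\norm{x_{k+1}-x^*}^2\le \norm{x_k-x^*}^2-2\eta_k\langle g_k,x_k-x^*\rangle+\eta_k^2\norm{g_k}^2,\]
where $g_k=\nabla_{\xi_k}f(x_k)$. The stepsize $\eta_k$ depends on $g_k$, so I use the AdaGrad-Norm telescoping in the form with $1/\eta_k$ nondecreasing and all iterates in $Q_\cR$ (so $\norm{x_k-x^*}\le 2\cR$). With $G_k=\tsum_{i\le k}\norm{g_i}^2$ and the choice $\eta_k=\cR/\sqrt{2G_k}$, the lemma $\tsum_k \norm{g_k}^2/\sqrt{G_k}\le 2\sqrt{G_K}$ gives
\[\tsum_{k=0}^K\langle g_k,x_k-x^*\rangle\le \cO\brs{\cR\sqrt{G_K}}.\]

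\textbf{Step 2: expectation and noise split.} Unbiasedness gives $\E{\langle g_k,x_k-x^*\rangle}=\E{\langle\nabla f(x_k),x_k-x^*\rangle}\ge\E{f(x_k)-f^*}$. Write $\delta_k=f(x_k)-f^*$ and $S=\tsum_k\delta_k$. By \Cref{ass:G01}, $\norm{g_k}^2\le 2\norm{u_{\xi_k}(x_k)}^2+2\cG_1^2\delta_k^2$. Using $\sqrt{a+b}\le\sqrt a+\sqrt b$ and Jensen,
\[\E{S}\le\cO\brs{\cR\cG_0\sqrt{K+1}}+\cO\brs{\cR\cG_1\,\E{\sqrt{\tsum_k\delta_k^2}}}.\]

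\textbf{Step 3: controlling the $\cG_1$ term.} This is the main obstacle. Here $\delta_k\le\cF$ because $x_k\in Q_\cR$, so $\sqrt{\tsum_k\delta_k^2}\le\sqrt{\cF S}$. Hence
\[\E{S}\le \cO\brs{\cR\cG_0\sqrt{K}}+\cO\brs{\cR\cG_1\sqrt{\cF\,\E{S}}}.\]
Solving this quadratic inequality in $\sqrt{\E{S}}$ gives
\[\E{S}\le\cO\brs{\cR\cG_0\sqrt{K}+\brs{\cR\cG_1}^2\cF}.\]
Dividing by $K+1$ and applying Jensen to $\ox_K$ yields
\[\E{f(\ox_K)-f^*}\le\cO\brs{\cR\cG_0/\sqrt{K}+\brs{\cR\cG_1}^2\cF/K}.\]

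One point needs care: the telescoping lemma and the Jensen step. The first expectation is of $\langle g_k,\cdot\rangle$ with a random $\eta$, but the bound in Step 1 is pathwise. Taking expectations of the left side term by term is legitimate because $x_k$ is $\cF_{k-1}$-measurable and the regret bound holds pathwise. I would also double-check that the $\delta$ term bound does not need $\eta$ independent of $g_k$; it does not.

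\textbf{Step 4: complexity.} Take $K\ge\brs{\cR\cG_1}^2\cF/\epsilon+\brs{\cR\cG_0/\epsilon}^2$. Then \cref{eq:MF_bound} with $\cM_0=\cG_0$ and $\cM_1=\cG_1$ (\Cref{rem:GM01}) gives $\brs{\cR\cG_1}^2\cF\le \cR\cG_0\cdot\cR\cG_1\exp\brr{\cO\brs{\cR\cG_1}}$. This equals $\cR\cG_0\exp\brr{\cO\brs{\cR\cG_1}}$, since the polynomial factor is absorbed into the exponential. This yields the stated $K$.
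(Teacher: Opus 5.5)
Your proposal is correct and follows essentially the same route as the paper. Both arguments use convexity and unbiasedness, the AdaGrad-Norm regret bound $\cO(\cR\sqrt{G_K})$, the $u$/$v$ split with Jensen applied to the $u$-part, the bound $f(x_k)-f^*\le\cF$ on $Q_\cR$, absorption of the $\cG_1$-term, and finally \cref{eq:MF_bound}. The differences are cosmetic: you derive the AdaGrad-Norm regret bound by telescoping rather than citing \citet{streeter2010less}, and you absorb the $\cG_1$-term via Jensen and a quadratic inequality in $\sqrt{\mathbb{E}[S]}$ rather than via Young's inequality inside the expectation.
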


As in the deterministic GD result in \Cref{thm:GD}, the iteration complexities in \Cref{thm:SGD,thm:AdaGrad} contain exponential factors and therefore can be very large. This raises the natural question of whether gradient normalization or Polyak stepsizes can yield improvements analogous to those obtained in the deterministic setting in \Cref{thm:normalized_polyak}. Unfortunately, the analysis in \Cref{thm:normalized_polyak} does not directly extend to the stochastic setting: gradient normalization and Polyak stepsizes generally break the unbiasedness property, which is crucial for the convergence analysis. In fact, normalized SGD can diverge even on simple one-dimensional convex problems \citep[Counterexample~1]{karimireddy2019error}. However, in the next \Cref{sec:alg}, we show that it is still possible to significantly improve upon the convergence rates of SGD and AdaGrad-Norm.

\section{Better Algorithms for Stochastic Convex Generalized Lipschitz Problems}\label{sec:alg}

\subsection{Algorithm Construction}\label{sec:wd}

In this section, we show how to construct an algorithm with convergence guarantees that significantly improve upon those in \Cref{sec:stoch}. We begin by defining the functions $f_k(x)\colon \Rd \to \R$ for $k \in \N$:
\begin{equation}\label{eq:fk}
  f_k(x) = \pi_k f\brr{\alpha_kx + \brr{1-\alpha_k}x_{k-1}},
  \quad
  \text{where}\quad
  \pi_k(1-\alpha_k) = \pi_{k-1}.
\end{equation}
Here, $\alpha_k \in [0,1]$ and $\pi_{k-1} \geq 0$ for $k \in \N$.
We then consider the following iterations for $k \in \N$:
\begin{equation}\label{eq:wd}
  x_k = (1-\alpha_k)x_{k-1} + \alpha_k z_k,\quad x_0 = 0,
\end{equation}
where the points $z_k \in Q_\cR$ are generated by an online optimization algorithm applied to an online linear optimization problem with regret
\begin{equation}\label{eq:regret}
  \reg_K = \tmax_{z \in Q_\cR}\tsum_{k=1}^K \<z_k - z,g_k>,
  \quad\text{where}\;\;
  g_k = \nabla_{\xi_k} f_k(z_k).
\end{equation}
Here, $\xi_k \sim \cD$ are random samples independent of the iterates $z_1,\dots,z_k$. When $1/\alpha_k = \pi_k = k$, the iterations in \cref{eq:wd} can be seen as a form of primal averaging \citep{nesterov2015quasi,tao2018primal} or online-to-batch conversion \citep{cutkosky2019anytime}. These techniques yield convergence guarantees for the last iterate $x_K$, rather than for the averaged iterate $\ox_K$ as in \Cref{thm:SGD,thm:AdaGrad}. However, we will later show that this choice of parameters leads to significantly suboptimal convergence rates under \Cref{ass:G01}. Consequently, in \Cref{lem:wd}, we analyze the iterations in \cref{eq:wd} for a general choice of parameters.

\begin{lemma}<lem:wd>
  Under \Cref{ass:G01}, the following inequality holds:
  \begin{equation}
    \pi_K \E{f(x_K) - f^*}\leq \pi_0\brs{f(x_0) - f^*} + \E{\reg_K}.
  \end{equation}
\end{lemma}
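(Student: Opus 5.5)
We prove the bound through a one-step inequality that telescopes. The key observation is that $f_k(x_k')$ has a simple form when its argument is $z_k$. By \cref{eq:wd}, $\alpha_k z_k + (1-\alpha_k)x_{k-1} = x_k$, so $f_k(z_k) = \pi_k f(x_k)$. Likewise, $f_k(x^*) = \pi_k f(\alpha_k x^* + (1-\alpha_k)x_{k-1})$.

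The function $f_k$ is convex, being a nonnegative multiple of a convex function composed with an affine map. Its subgradient is $\nabla f_k(x) = \pi_k\alpha_k \nabla f(\alpha_k x + (1-\alpha_k)x_{k-1})$, and we take the stochastic estimator $\nabla_\xi f_k(x)$ to be $\pi_k\alpha_k\nabla_\xi f(\cdot)$ at the same point. By the unbiasedness in \Cref{ass:G01}, together with the independence of $\xi_k$ from $z_1,\dots,z_k$ (and hence from $x_{k-1}$), we get $\E[\xi_k]{g_k} = \nabla f_k(z_k)$. Convexity then gives
\begin{equation*}
  f_k(z_k) - f_k(x^*) \leq \<z_k - x^*, \nabla f_k(z_k)>.
\end{equation*}
Taking expectations and using the tower property,
\begin{equation*}
  \E{f_k(z_k) - f_k(x^*)} \leq \E{\<z_k - x^*, g_k>}.
\end{equation*}
The left-hand side is bounded below using convexity of $f$ once more:
\begin{equation*}
  f_k(x^*) \leq \pi_k\alpha_k f(x^*) + \pi_k(1-\alpha_k) f(x_{k-1}) = (\pi_k - \pi_{k-1}) f^* + \pi_{k-1} f(x_{k-1}),
\end{equation*}
where the last step uses $\pi_k(1-\alpha_k) = \pi_{k-1}$. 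Since $f_k(z_k) = \pi_k f(x_k)$, subtracting $\pi_k f^*$ from both sides yields
\begin{equation*}
  \pi_k\brs{f(x_k) - f^*} - \pi_{k-1}\brs{f(x_{k-1}) - f^*} \leq f_k(z_k) - f_k(x^*).
\end{equation*}

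Summing this over $k = 1,\dots,K$ telescopes to
\begin{equation*}
  \pi_K\E{f(x_K) - f^*} \leq \pi_0\brs{f(x_0) - f^*} + \E{\tsum_{k=1}^K \<z_k - x^*, g_k>}.
\end{equation*}
Because $x^* \in Q_\cR$, the sum inside the expectation is at most $\reg_K$ pointwise, which gives the claim.

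The only delicate point is the expectation step. The comparator $x^*$ is fixed, whereas $\reg_K$ is a maximum over $z$. We therefore apply the conditional expectation with $x^*$ first, and only afterwards bound the sum by the maximum. This step also needs $z_k$ and $x_{k-1}$ to be measurable with respect to $\xi_1,\dots,\xi_{k-1}$ (plus any independent randomness), and $\xi_k$ to be independent of them, which is exactly what is assumed. Integrability of $g_k$ is needed as well; it follows from \Cref{ass:G01}, since $z_k \in Q_\cR$ and $f$ is bounded on $Q_\cR$.
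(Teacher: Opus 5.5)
Your proof is correct and is essentially the paper's argument: lower-bound the regret by the linear sum at the fixed comparator $x^* \in Q_\cR$, use unbiasedness and the independence of $\xi_k$ to replace $g_k$ by $\nabla f_k(z_k) = \alpha_k\pi_k\nabla f(x_k)$, apply convexity of $f_k$, use $f_k(z_k) = \pi_k f(x_k)$ together with convexity of $f$ to bound $f_k(x^*)$, and telescope via $\pi_k(1-\alpha_k) = \pi_{k-1}$. Deriving the per-step inequality first and comparing with $\reg_K$ last is only a cosmetic reordering; in your integrability remark, the relevant fact is $x_k \in Q_\cR$, since that is where the stochastic gradient is evaluated, rather than $z_k \in Q_\cR$.
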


We further focus on online optimization algorithms that guarantee the regret bound stated in \Cref{ass:regret}. This bound is optimal when no assumptions are imposed on the gradients $g_k$ \citep{orabona2018scale}. Moreover, we will show that it enables adaptation to the unbounded gradients allowed under \Cref{ass:G01}. Several algorithms achieve this bound, including AdaGrad-Norm \citep{streeter2010less}, SOLO-FTRL \citep{orabona2018scale}, and the scalar-stepsize variant of Leon \citep{jiang2026adaptive,kovalev2026optimal}.

\begin{assumption}<ass:regret>
  The iterates $z_k \in Q_\cR$ satisfy the following, where $\cC \geq 1$ is a universal constant:
  \begin{equation}
    \reg_K \las \cC\cR\mysqrt{\tsum_{k=1}^K \sqn{g_k}}.
  \end{equation}
\end{assumption}

\subsection{Convergence Analysis with Default Averaging}\label{sec:avg}

\begin{figure}[t]
  \vspace{-2em}
  \begin{minipage}{0.48\linewidth}
    \begin{algorithm}[H]
      \caption{Clipped AdamW (\Cref{thm:avg})}\label{alg:avg}
      \begin{algorithmic}[1]
        \State $m_0 = 0$, $v_0 = 0$, $x_1 = 0$
        \For{$k=1,\dots,K-1$}
        \State $\hg_k = \nabla_{\xi_k} f(x_k)$, where $\xi_k \sim \cD$
        \State $m_k = m_{k-1}+\hg_k \vphantom{m_k = (1-\alpha)m_{k-1}+ \alpha \hg_k}$\label{line:avg:m}
        \State $v_k = v_{k-1} +  \sqn{\hg_k}\vphantom{v_k = (1-\alpha)^2v_{k-1} +  \alpha^2\sqn{\hg_k}}$\label{line:avg:v}
        \State $x_{k+1} = \frac{k}{k+1}x_k - \frac{\cR}{k+1}\clip[2]{m_k / \mysqrt{v_k}}$
        \EndFor
      \end{algorithmic}
    \end{algorithm}
  \end{minipage}
  \hfill
  \begin{minipage}{0.51\linewidth}
    \begin{algorithm}[H]
      \caption{Clipped AdamW (\Cref{thm:exp1})}\label{alg:exp}
      \begin{algorithmic}[1]
        \State $m_0 = 0$, $v_0 = 0$, $x_1 = 0$
        \For{$k=1,\dots,K-1$}
        \State $\hg_k = \nabla_{\xi_k} f(x_k)$, where $\xi_k \sim \cD$
        \State $m_k = (1-\alpha)m_{k-1}+ \alpha \hg_k$\label{line:exp:m}
        \State $v_k = (1-\alpha)^2v_{k-1} +  \alpha^2\sqn{\hg_k}$\label{line:exp:v}
        \State $x_{k+1} = (1-\alpha)x_k - \alpha \cR \clip[2]{m_k / \mysqrt{v_k}}
        \vphantom{x_{k+1} = \frac{k}{k+1}x_k - \frac{\cR}{k+1}\clip[2]{m_k / \mysqrt{v_k}}}$
        \label{line:exp:x}
        \EndFor
      \end{algorithmic}
    \end{algorithm}
  \end{minipage}
\end{figure}

In this section, we analyze the convergence of the iterations in \cref{eq:wd} under the default choice of parameters: $1/\alpha_k = \pi_k = k$. When SOLO-FTRL is used as the online learning algorithm to guarantee the regret bound in \Cref{ass:regret}, we obtain \Cref{alg:avg}, which is very similar to AdamW with scalar stepsizes and clipped updates. Further, we mostly focus on this particular algorithm, although the results and discussions apply to the iterations in \cref{eq:wd} combined with other online learning algorithms that generate the iterates $z_k$ satisfying \Cref{ass:regret}.

We begin the convergence analysis with an upper bound on the expected regret in \Cref{lem:regret}. Next, in \Cref{thm:avg}, we establish the main convergence result under the default choice of parameters. Finally, using the inequality in \cref{eq:MF_bound}, we derive an explicit iteration complexity in \Cref{cor:avg}.

\begin{lemma}<lem:regret>
  Let \Cref{ass:G01,ass:regret} hold. Then, the following inequality holds:
  \begin{equation}
    \E{\reg_K} \leq
    \cC\cR\cG_0\mysqrt{\tsum_{k=1}^K \alpha_k^2\pi_k^2}
    + \E*{\cC\cR\cG_1\mysqrt{\tsum_{k=1}^K \alpha_k^2\pi_k^2\brs*{f(x_k) - f^*}^2}}.
  \end{equation}
\end{lemma}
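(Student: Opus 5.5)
Start from \Cref{ass:regret}, which gives almost surely $\reg_K \leq \cC\cR\mysqrt{\tsum_{k=1}^K \sqn{g_k}}$, so it suffices to bound $\E{\mysqrt{\tsum_{k=1}^K \sqn{g_k}}}$.

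\textbf{Step 1: express $g_k$ through the stochastic gradient of $f$.} By the chain rule applied to \cref{eq:fk}, $g_k = \nabla_{\xi_k} f_k(z_k) = \alpha_k\pi_k \nabla_{\xi_k} f(\alpha_k z_k + (1-\alpha_k)x_{k-1})$. By \cref{eq:wd}, the argument is exactly $x_k$, so $g_k = \alpha_k\pi_k\nabla_{\xi_k} f(x_k)$. The definition of the stochastic estimator of $f_k$ is taken to be precisely this scaled estimator. Then use the decomposition of \Cref{ass:G01}: $g_k = \alpha_k\pi_k u_{\xi_k}(x_k) + \alpha_k\pi_k v_{\xi_k}(x_k)$.

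\textbf{Step 2: split the square root.} By the triangle inequality for the Euclidean norm on the stacked vector in $\R^{dK}$,
\[
\mysqrt{\tsum_{k=1}^K \sqn{g_k}} \leq \mysqrt{\tsum_{k=1}^K \alpha_k^2\pi_k^2\sqn{u_{\xi_k}(x_k)}} + \mysqrt{\tsum_{k=1}^K \alpha_k^2\pi_k^2\sqn{v_{\xi_k}(x_k)}}.
\]
For the second term, apply the almost-sure bound $\norm{v_{\xi_k}(x_k)} \leq \cG_1\brs{f(x_k)-f^*}$ termwise. This yields the second term of the claim directly, after taking expectations and multiplying by $\cC\cR$.

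\textbf{Step 3: the $u$-term.} This is the only step needing care, because $x_k$ is random and depends on $\xi_1,\dots,\xi_{k-1}$. First apply Jensen's inequality (concavity of the square root) to get
\[
\E{\mysqrt{\tsum \alpha_k^2\pi_k^2\sqn{u_{\xi_k}(x_k)}}} \leq \mysqrt{\tsum \alpha_k^2\pi_k^2\E{\sqn{u_{\xi_k}(x_k)}}}.
\]
Then use the tower property: condition on $x_k$, which is measurable with respect to $\xi_1,\dots,\xi_{k-1}$ and independent of $\xi_k$. This gives $\E{\sqn{u_{\xi_k}(x_k)}\mid x_k} \leq \cG_0^2$. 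The parameters $\alpha_k,\pi_k$ are deterministic, so the bound becomes $\cG_0\mysqrt{\tsum \alpha_k^2\pi_k^2}$.

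Combining the three steps gives the stated inequality. The main point to verify is the measurability and independence structure in Step 3. This requires $z_k$, and hence $x_k$, to be determined before $\xi_k$ is drawn, which \Cref{sec:wd} explicitly stipulates.
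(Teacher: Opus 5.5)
Your proposal is correct and follows the paper's proof. Both arguments apply \Cref{ass:regret} and rewrite $g_k = \alpha_k\pi_k\nabla_{\xi_k} f(x_k)$ via \cref{eq:wd,eq:fk}. They then split the square root with the triangle inequality, apply Jensen's inequality to the $u$-part, and use the almost-sure bound on the $v$-part. Your explicit conditioning argument for $\E{\sqn{u_{\xi_k}(x_k)}} \leq \cG_0^2$, which relies only on $\xi_k$ being independent of $x_k$, spells out a step the paper uses implicitly.
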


\begin{theorem}<thm:avg>
  Let \Cref{ass:regret,ass:G01} hold, and assume $\cR\cG_1 \geq 1$. Let $\alpha_k = 1/k$ and $\pi_0 = 0$. Let $K \in \N$ be defined as follows:
  \begin{equation}\label{eq:K0}
    K = \ceil*{
      \brs{\cR\cG_0/\epsilon}^2
      +
      \brs*{\brs{\cR\cG_1}^2\cdot \brs{\cF / \epsilon}}
      \cdot \ln\brs*{e+\brs{\cR\cG_0/\epsilon} + \brs{\cR\cG_1}^2\cdot\brs{\cF/\epsilon}}
    }.
  \end{equation}
  Then, the inequality $\E*{f(x_K) - f^*}\leq \cO\brs{\epsilon}$ holds.
\end{theorem}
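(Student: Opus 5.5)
With $\alpha_k = 1/k$ and $\pi_k = k$ we have $\alpha_k\pi_k = 1$. Since $\pi_0 = 0$, \Cref{lem:wd,lem:regret} give
\begin{equation*}
  K\,\E{f(x_K) - f^*} \leq \cC\cR\cG_0\mysqrt{K} + \cC\cR\cG_1\E*{\mysqrt{\tsum_{k=1}^K \brs{f(x_k) - f^*}^2}}.
\end{equation*}
The first term contributes $\cO\brs{\cR\cG_0/\mysqrt{K}}$ to the rate, which is at most $\cO\brs{\epsilon}$ by the $\brs{\cR\cG_0/\epsilon}^2$ part of \cref{eq:K0}. The work lies in the second term.

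The points $x_k$ are convex combinations of the $z_i \in Q_\cR$ (with $x_0 = 0$), so each $x_k$ lies in $Q_\cR$ and $f(x_k) - f^* \leq \cF$. Writing $\delta_k = \E{f(x_k) - f^*}$, I would use $\brs{f(x_k)-f^*}^2 \leq \cF\brs{f(x_k)-f^*}$ and then Jensen, which gives $\E{\mysqrt{\sum_k \brs{f(x_k)-f^*}^2}} \leq \mysqrt{\cF\sum_{k=1}^K \delta_k}$. The same inequality applies with $K$ replaced by any $k \leq K$, because the lemmas hold at every horizon. This yields the recursion
\begin{equation*}
  k\,\delta_k \leq \cC\cR\cG_0\mysqrt{k} + \cC\cR\cG_1\mysqrt{\cF\,S_k},\qquad S_k = \tsum_{i=1}^k \delta_i .
\end{equation*}

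Next I would solve this recursion. Since $\delta_k = S_k - S_{k-1}$, it becomes $S_k - S_{k-1} \leq \frac{a}{\mysqrt{k}} + \frac{b\mysqrt{S_k}}{k}$ with $a = \cC\cR\cG_0$ and $b = \cC\cR\cG_1\mysqrt{\cF}$. I would prove by induction an ansatz $S_k \leq 2a\mysqrt{k} + \cO\brs{b^2\ln^2(e+k)}$. To see why this works, note that the $\mysqrt{S}$ term, summed against $1/k$, contributes $\sum_k \frac{b\mysqrt{S_k}}{k} \lesssim b\mysqrt{S_K}\ln K$. So $S_K \leq 2a\mysqrt{K} + b\mysqrt{S_K}\ln K$, and hence $S_K \lesssim a\mysqrt{K} + b^2\ln^2 K$.

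Substituting back into the $K$-inequality gives
\begin{equation*}
  \delta_K \lesssim \frac{a}{\mysqrt{K}} + \frac{b\mysqrt{a}K^{1/4} + b^2\ln K}{K}.
\end{equation*}
The cross term satisfies $b\mysqrt{a}K^{-3/4} \leq \frac{a}{\mysqrt{K}} + \frac{b^2}{K}$ by AM–GM. So the rate is $\cO\brs{\cR\cG_0/\mysqrt{K} + \brs{\cR\cG_1}^2\cF\ln K/K}$. The condition $\cR\cG_1 \geq 1$ is used to absorb the constant $\cC$ and to ensure $b^2$ dominates the lower-order terms.

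Finally, $K$ in \cref{eq:K0} makes $\brs{\cR\cG_1}^2\cF\ln K/K \leq \cO\brs{\epsilon}$. The reason is that $K \geq A\ln(e+A)$ with $A = \brs{\cR\cG_1}^2\cF/\epsilon$, and the logarithm inside \cref{eq:K0} dominates $\ln K$ up to constants. This uses the standard fact that $\ln K/K$ is decreasing and that $K \geq A\ln(e+A')$ implies $A\ln K/K = \cO\brs{1}$ when $A' \geq A$ and $K \lesssim \mathrm{poly}(A', \cR\cG_0/\epsilon)$.

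I expect the main obstacle to be making the $\ln K$ factor rigorous, both in the induction for $S_k$ and in matching it to the $\ln(\cdot)$ term in \cref{eq:K0}. If the log-squared bound turns out to be loose, I would instead bound $\sum_k \mysqrt{S_k}/k$ by $\mysqrt{S_K}\,(1+\ln K)$ directly using monotonicity of $S_k$. That gives a quadratic inequality in $\mysqrt{S_K}$ with only a single log, which is enough for the stated complexity.
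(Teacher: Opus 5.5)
Your proposal is correct. It matches the paper's proof up to the recursion $k\delta_k \le a\sqrt{k} + b\sqrt{S_k}$, and again after $S_K$ has been bounded. The difference lies in how the recursion is solved.

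\textbf{The paper's route.} It first applies Young's inequality, $b\sqrt{S_k} \le b^2/(4p) + pS_k$, which turns the recursion into the linear one $k\delta_k \le a\sqrt{k} + b^2/(4p) + pS_k$. It then invokes the discrete Gr\"onwall-type \Cref{lem:tech}. The growth factor $(\pi_k/\pi_{i-1})^p = (k/(i-1))^p$ in that lemma forces the choice $p = \min\{1/\ln K, 1/4\}$, so that $K^p \le e$. The resulting $1/p^2$ factor produces the bound $S_K \le 4a\sqrt{K} + \cO(b^2(1+\ln K)^2)$.

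\textbf{Your route.} You sum $\delta_k \le a/\sqrt{k} + b\sqrt{S_k}/k$ over $k$ and use that $S_k$ is nondecreasing. This gives $S_K \le 2a\sqrt{K} + bH_K\sqrt{S_K}$ with $H_K = \sum_{k\le K} 1/k \le 1+\ln K$, and hence $S_K \le 4a\sqrt{K} + b^2H_K^2$. This is the same bound, with explicit constants and no parameter to tune. The summation argument you present as a heuristic and as a fallback is already a complete proof, so the induction on an ansatz is unnecessary. The squared logarithm is harmless: the square root in $b\sqrt{S_K}/K$ reduces it to the single logarithm in the rate.

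\textbf{What each buys.} Your argument exploits the special structure of this theorem, namely $\alpha_k\pi_k = 1$ with harmonic weights $1/k$, and the sublinear $\sqrt{S_k}$ dependence. The paper's argument buys uniformity. The same \Cref{lem:tech}, with $p = 1/2$, also drives \Cref{thm:exp1,thm:exp2}. There the recursion is genuinely linear, the weights $\pi_k$ grow geometrically, and the total weight $\sum_k \alpha_k$ exceeds the absorbable threshold, so a plain monotone summation cannot close.

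\textbf{Two minor remarks.} First, the assumption $\cR\cG_1 \ge 1$ is not used by either argument; $\cC$ is a universal constant and is absorbed into $\cO(\cdot)$ regardless. Second, the rate should carry the factor $1+\ln K$ rather than $\ln K$, so that the case $K = 1$ is covered.
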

\begin{corollary}<cor:avg>
  Under the conditions of \Cref{thm:avg}, to reach the precision $\E*{f(x_K) - f^*}\leq \epsilon$, it is sufficient to perform the following number of iterations:
  \begin{equation}
    K = \tilde\cO\brs*{
      \exp\brr{\cO\brs{\cR\cG_1}} \cdot \brs{\cR\cG_0/\epsilon}
      + \brs{\cR\cG_0/\epsilon}^2
    }.
  \end{equation}
\end{corollary}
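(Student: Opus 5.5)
Corollary~\ref{cor:avg} follows by substituting the bound on $\cF$ from \cref{eq:MF_bound} into the iteration count \cref{eq:K0} of \Cref{thm:avg} and simplifying. Since \Cref{rem:GM01} gives \Cref{ass:M01} with $\cM_0 = \cG_0$ and $\cM_1 = \cG_1$, the estimate in \cref{eq:MF_bound} applies with these constants. It yields
\[
\cF \leq \brs{\cG_0/\cG_1}\brs{\exp\brr{\cO\brs{\cR\cG_1}} - 1},
\]
which \Cref{lem:M01} derives by taking $x = x^*$ and $\norm{x'-x^*}\leq 2\cR$ for $x' \in Q_\cR$. Hence
\[
\brs{\cR\cG_1}^2\cdot\brs{\cF/\epsilon} \leq \cR^2\cG_1\cG_0\exp\brr{\cO\brs{\cR\cG_1}}/\epsilon
= \brs{\cR\cG_1}\exp\brr{\cO\brs{\cR\cG_1}}\cdot\brs{\cR\cG_0/\epsilon}.
\]
The polynomial prefactor $\cR\cG_1$ is absorbed into the exponential, since $t \leq e^{t}$ for $t = \cR\cG_1 \geq 1$, after enlarging the hidden constant in $\cO\brs{\cR\cG_1}$. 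This makes the second term of \cref{eq:K0} equal to $\exp\brr{\cO\brs{\cR\cG_1}}\cdot\brs{\cR\cG_0/\epsilon}$, up to the logarithmic factor.

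Next I handle the logarithm $\ln\brs{e+\brs{\cR\cG_0/\epsilon}+\brs{\cR\cG_1}^2\brs{\cF/\epsilon}}$. Plugging in the same bound on $\cF$, its argument is at most $e + \brs{\cR\cG_0/\epsilon}\exp\brr{\cO\brs{\cR\cG_1}}$. The logarithm is therefore at most
\[
\cO\brs{\cR\cG_1} + \ln\brs{e+\cR\cG_0/\epsilon}.
\]
The second summand is polylogarithmic in the problem constants and $\epsilon$, so $\tilde\cO$ hides it. The first summand is a factor $\cO\brs{\cR\cG_1}$ multiplying the exponential term, and it is absorbed into $\exp\brr{\cO\brs{\cR\cG_1}}$ exactly as the prefactor was. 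Combining both terms with the unaffected $\brs{\cR\cG_0/\epsilon}^2$ term gives the stated $K$.

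The theorem only guarantees $\E{f(x_K)-f^*}\leq \cO\brs{\epsilon}$, so a final step rescales $\epsilon$ by a universal constant to obtain precision exactly $\epsilon$. This changes $K$ only by constant factors, which $\tilde\cO$ hides.

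The only point needing care is bookkeeping: the $\ln$ factor must be split into a genuinely polylogarithmic part and a part linear in $\cR\cG_1$. The latter has to go into the exponential, not into $\tilde\cO$, because it depends on $\cR\cG_1$ polynomially rather than logarithmically. With that split there is no real obstacle; the argument is a direct substitution.
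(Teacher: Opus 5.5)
Your proposal is correct and follows the paper's route. The paper gives no separate proof and says only that the corollary follows from \cref{eq:MF_bound}: substitute the bound $\cF \leq (\cG_0/\cG_1)(\exp(\cO(\cR\cG_1)) - 1)$, valid via \Cref{rem:GM01}, into the iteration count \cref{eq:K0} and simplify. Your split of the logarithm into an $\cO(\cR\cG_1)$ part, absorbed into the exponential using $\cR\cG_1 \geq 1$, and a genuinely polylogarithmic part $\ln(e+\cR\cG_0/\epsilon)$ is exactly the bookkeeping the paper leaves implicit.
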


Unfortunately, the complexity bound for \Cref{alg:avg} in \Cref{cor:avg} does not improve upon the corresponding bounds for SGD and AdaGrad-Norm in \Cref{thm:SGD,thm:AdaGrad}. As in the discussion of GD convergence in \Cref{sec:det}, this bound admits an informal two-phase interpretation, with slow sublinear rates in both phases. The main limiting factor is the default choice of parameters for the iterations in \cref{eq:wd}, which corresponds to simple averaging of the gradients and squared gradient norms in \Cref{line:avg:m,line:avg:v} of \Cref{alg:avg}. This simple averaging is the key difference from practical Adam-type methods, which use exponential decay instead. In the next \Cref{sec:exp}, we show that exponential decay is not merely a practical heuristic, but can substantially improve upon the complexity bound in \Cref{cor:avg}.

\subsection{Improved Rates via Exponential Decay}\label{sec:exp}

In this section, we show that replacing the simple averaging in \Cref{alg:avg} with exponential decay improves the convergence rate. Specifically, we replace the default schedule $1/\alpha_k = \pi_k = k$ in \cref{eq:wd} with a constant parameter $\alpha_k = \alpha$. This yields \Cref{alg:exp}, which coincides with scalar-stepsize AdamW with clipped updates. The constant choice of $\alpha_k$ is exactly what induces the exponential decay in \Cref{line:exp:m,line:exp:v} of \Cref{alg:exp}.

We begin the convergence analysis with \Cref{lem:regret_exp1}, which gives the key improved upper bound on the regret. It exploits the fact that, with a sufficiently small choice of the parameters $\alpha_k$, the iterates in \cref{eq:wd} remain stable, i.e., $x_k \approx x_{k-1}$. Intuitively, this may imply the stability of the gradients, i.e., $g_k \approx g_{k-1}$, which makes the method behave more like normalized GD in \Cref{thm:normalized_polyak} by accumulating stable squared gradient norms in \Cref{line:exp:v}. Formally, the argument uses the stability of the function values $f(x_k)$, which accounts for the improvement of the bound in \Cref{lem:regret_exp1} over the bound in \Cref{lem:regret}. Note that \citet{nesterov2015quasi,cutkosky2019anytime} also made informal stability-related observations about primal averaging and online-to-batch conversion, but did not provide any formal justification that these schemes actually improve convergence rates. In contrast, our analysis in \Cref{proof:lem:regret_exp1} is substantially different: it formally exploits the iterate stability and leads to improved convergence guarantees.

\begin{lemma}<lem:regret_exp1>
  Let \Cref{ass:regret,ass:G01} hold and assume $\cR\cG_1 \geq 1$. Let $\alpha_k$ satisfy the following:
  \begin{equation}\label{eq:alpha_T}
    \alpha_k = \alpha
    ,\quad\text{where}\;\;
    \alpha T \cR\cG_1\leq 1
    \quad\text{and}\quad
    T=\ceil*{\hC\brs{\cR\cG_1}^2},
  \end{equation}
  where $\hC$ is a sufficiently large universal constant.
  Then, the following inequality holds for all $K \geq T$:
  \begin{equation}\label{eq:regret_bound}
    \E*{\reg_K}
    \leq
    \tfrac{1}{2}\tsum_{k=1}^K\alpha_k\pi_k\E*{f(x_k) - f^*}
    +
    \cO\brs*{
      \cR\cG_0\mysqrt{\tsum_{k=1}^K \alpha_k^2\pi_k^2}
      +
      T\cR\cG_0\tsum_{k=1}^K\alpha_k^2\pi_k
    }.
  \end{equation}
\end{lemma}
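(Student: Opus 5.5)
The plan is to reduce the regret to a weighted sum of squared optimality gaps via \Cref{ass:regret}, and then control that sum by the \emph{stability} of $f(x_k)$ over windows of length $T$.

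\textbf{Rewriting the stochastic gradients.} By \cref{eq:fk,eq:wd}, we have $\alpha_k z_k + (1-\alpha_k)x_{k-1} = x_k$. Hence
\[
g_k = \nabla_{\xi_k} f_k(z_k) = a_k\nabla_{\xi_k} f(x_k),
\qquad a_k := \alpha_k\pi_k .
\]
Using the decomposition of \Cref{ass:G01} and $\sqrt{p+q}\le\sqrt p+\sqrt q$, \Cref{ass:regret} gives
\[
\reg_K \le \cC\cR\mysqrt{\tsum_k a_k^2\sqn{u_{\xi_k}(x_k)}} + \cC\cR\cG_1\mysqrt{\tsum_k a_k^2\Delta_k^2},
\qquad \Delta_k := f(x_k)-f^*.
\]
This is exactly \Cref{lem:regret}. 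After taking expectations, Jensen's inequality and $\E{\sqn{u}}\le\cG_0^2$ turn the first term into $\cO(\cR\cG_0\mysqrt{\tsum_k a_k^2})$. All the work is in the second term, which a naive treatment cannot absorb into $\tfrac12\tsum_k a_k\E{\Delta_k}$.

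\textbf{Stability of the gaps.} Since $z_k,x_{k-1}\in Q_\cR$ (the latter by convexity and $x_0=0$), each step satisfies $\norm{x_k-x_{k-1}} = \alpha\norm{z_k-x_{k-1}}\le 2\alpha\cR$. So for $\abs{j-k}\le T$ we get $\norm{x_j-x_k}\le 2\alpha T\cR\le 2/\cG_1$ by \cref{eq:alpha_T}. Applying \Cref{lem:M01} with $\cM_0=\cG_0$ and $\cM_1=\cG_1$ (\Cref{rem:GM01}) then gives
\[
\Delta_j \le \Delta_k\brs{1+2e^2\alpha T\cR\cG_1} + 2e^2\alpha T\cR\cG_0 \le c_1\Delta_k + c_2\,\delta,
\qquad \delta := \alpha T\cR\cG_0 .
\]
The weights are also stable over such windows: $a_j/a_k=(1-\alpha)^{k-j}$ lies in $[e^{-2},e^{2}]$, since $\alpha T\le 1/(\cR\cG_1)\le 1$. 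Because $K\ge T$, every $k\le K$ has a full window of $T$ indices in $\{1,\dots,K\}$. Averaging the stability inequality over that window gives the \emph{pathwise} bound
\[
a_k\Delta_k \le \tfrac{c}{T}\tsum_{j=1}^K a_j\Delta_j + c\,a_k\delta .
\]

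\textbf{Closing the argument.} Write $S=\tsum_k a_k\Delta_k$. Then
\[
\tsum_k a_k^2\Delta_k^2 \le \brr{\max_k a_k\Delta_k}\, S \le \tfrac{c}{T}S^2 + c\,\delta\,\max_k a_k\cdot S .
\]
Taking square roots,
\[
\cC\cR\cG_1\mysqrt{\tsum_k a_k^2\Delta_k^2} \le \cC\cR\cG_1\mysqrt{c/T}\,S + \cC\cR\cG_1\mysqrt{c\,\delta \max_k a_k\, S}.
\]
Since $T=\ceil{\hC(\cR\cG_1)^2}$, the first coefficient is at most $1/4$ for $\hC$ large enough. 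For the second term I would not apply Young's inequality to it directly, because the bound $\max_k a_k\le a_K$ loses too much. Instead I would go back to the pointwise bound and split $a_k\Delta_k\cdot a_k\Delta_k$, using the stability estimate only on one factor:
\[
a_k^2\Delta_k^2 \le a_k\Delta_k\Bigl(\tfrac{c}{T}S + c\,a_k\delta\Bigr).
\]
The cross term then sums to $\tsum_k a_k^2\Delta_k\delta$. Young's inequality applied termwise gives
\[
\cC\cR\cG_1\mysqrt{\tsum_k a_k^2\Delta_k\delta} \le \tfrac14 S + \cO\brr{\cR^2\cG_1^2\,\delta\tsum_k a_k^2/a_k}.
\]
Using $(\cR\cG_1)^2\lesssim T$ and $\delta=\alpha T\cR\cG_0$, the remainder is $\cO(T\cR\cG_0\tsum_k\alpha_k^2\pi_k)$, up to the factor $\alpha T\cR\cG_1\le 1$. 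Taking expectations yields \cref{eq:regret_bound} with the coefficient $\tfrac12$.

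\textbf{Main obstacle.} The crucial point is the window-averaging step: it must hold pathwise so that expectations can be taken only at the end. The remainder must also come out as $T\cR\cG_0\tsum_k\alpha_k^2\pi_k$ rather than something involving $\max_k a_k$. If the termwise Young step does not match the constants, my fallback is to apply it blockwise over consecutive windows of length $T$. Within a block the $a_k$ are comparable, so $\tsum_{k\in\text{block}}a_k^2\Delta_k^2\lesssim T^{-1}\brr{\tsum_{k\in\text{block}}a_k\Delta_k}^2+\dots$, and the square root is then taken across blocks.
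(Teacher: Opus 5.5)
Your reduction to $\cC\cR\cG_1\sqrt{\sum_k a_k^2\Delta_k^2}$ is correct, and so is the pathwise window bound $a_k\Delta_k\le\frac{c}{T}S+c\,a_k\delta$. The closing step you actually commit to, however, fails.

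\textbf{Where it fails.} You bound $\sqrt{\sum_k a_k^2\Delta_k\delta}\le\sum_k a_k\sqrt{\Delta_k\delta}$ and apply Young termwise. Since $a_k^2/a_k=a_k$, this leaves the remainder $\cC^2\cR^2\cG_1^2\,\delta\sum_k a_k$. With $\delta=\alpha T\cR\cG_0$, this equals $\cC^2(\cR\cG_1)^2\cdot T\cR\cG_0\sum_k\alpha_k^2\pi_k$, which is the allowed term multiplied by $(\cR\cG_1)^2\asymp T/\hC$.

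The constraint $\alpha T\cR\cG_1\le 1$ cannot absorb this factor; it points the wrong way. Pulling it out leaves $\cR\cG_1\cdot\cR\cG_0\sum_k\alpha\pi_k$. The target is $\alpha T\cR\cG_0\sum_k\alpha\pi_k$, and $\alpha T\le 1/(\cR\cG_1)$, so the gap stays at least $(\cR\cG_1)^2$.

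The loss comes from summing over $k$. Because $\sum_k a_k^2\Delta_k\le(\max_k a_k)S$, the correct remainder involves $\max_k a_k$. But $\sum_k a_k\ge (T/e^2)\max_k a_k$ here, so replacing the maximum by the sum costs a factor of order $T$.

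\textbf{The fix.} The route you rejected is the one that works. For constant $\alpha$, $a_k=\alpha\pi_k$ is increasing and $a_k\ge e^{-2}a_K$ for $K-T<k\le K$. Since $K\ge T$, this gives $\max_k a_k=a_K\le\frac{e^2}{T}\sum_{k=1}^K a_k$, so the bound is not lossy. Hence
\[
\cC\cR\cG_1\sqrt{c\,\delta\,a_K S}\le\tfrac14 S+\cC^2c\,\cR^2\cG_1^2\,\delta\,a_K\le\tfrac14 S+\tfrac{\cC^2ce^2}{\hC}\,T\cR\cG_0\sum_{k=1}^K\alpha_k^2\pi_k .
\]
Combined with your first display and $\cC\cR\cG_1\sqrt{c/T}\le\frac14$, this gives the pathwise bound $\cC\cR\cG_1\sqrt{\sum_k a_k^2\Delta_k^2}\le\frac12 S+\cO(T\cR\cG_0\sum_k\alpha_k^2\pi_k)$. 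Taking expectations then finishes the proof.

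\textbf{Comparison with the paper.} The paper argues differently:
\begin{enumerate}
  \item It partitions $\{1,\dots,K\}$ into blocks of length between $T$ and $2T-1$.
  \item It compares each $f(x_k)$ to $f(x_{k_s})$ at the block start, using the crude form of \Cref{lem:M01}.
  \item It bounds the root by $\sum_s\sqrt{T}\alpha\pi_{k_s}(f(x_{k_s})-f^*)$.
  \item It averages $f(x_{k_s})$ back over the block, using the finer form of \Cref{lem:M01}; this step produces the $T\cR\cG_0\sum_k\alpha_k^2\pi_k$ term.
\end{enumerate}
Your window-averaging argument with the corrected Young step is a shorter alternative to this.

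Your blockwise fallback is essentially the paper's proof, and it also works. If you average inside the square root as you sketch, you get $\sum_k a_k^2\Delta_k^2\le\frac{C}{T}S^2+C\delta^2\sum_k a_k^2$. The $\delta$-part then costs only $\cR\cG_1\delta\sqrt{\sum_k a_k^2}\le\cR\cG_0\sqrt{\sum_k a_k^2}$.
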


Armed with \Cref{lem:regret_exp1,lem:wd}, we derive the main convergence result in \Cref{thm:exp1} and the explicit iteration complexity in \Cref{cor:exp1}. As for \Cref{alg:avg} in \Cref{sec:avg}, the complexity bound for \Cref{alg:exp} in \Cref{cor:exp1} admits a two-phase interpretation. However, while \Cref{alg:avg} and the other stochastic methods from \Cref{sec:stoch} achieve slow sublinear convergence rates in both phases, \Cref{alg:exp} can achieve fast linear convergence in the first phase, up to the precision $\hepsilon = \cO\brs{\cG_0/\cG_1}$. This improvement can be significant when $\cR \cG_1\gg 1$ and, more importantly, eliminates exponential factors from the iteration complexity.

\begin{theorem}<thm:exp1>
  Let \Cref{ass:regret,ass:G01} hold, and assume $\cR\cG_1 \geq 1$. Let $\pi_0 = 1$, and let $\alpha_k$ and $K \in \N$ be defined as follows:
  \begin{equation}\label{eq:alpha_K1}
    1/\alpha_k = 1/\alpha = \max\brf*{T\cR\cG_1, \brs{\cR\cG_0/\epsilon}^2, \brs{T\cR\cG_0/\epsilon}},
    \quad
    K =  \ceil*{\brs{2/\alpha_1}\ln \brs*{e + \brs{\cF/\epsilon}}},
  \end{equation}
  where $T$ is defined in \cref{eq:alpha_T}.
  Then, the inequality $\E*{f(x_K) - f^*}\leq \cO\brs{\epsilon}$ holds.
\end{theorem}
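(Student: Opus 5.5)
Throughout, write $\delta_k = \E{f(x_k) - f^*}$. The plan is to combine \Cref{lem:wd} with \Cref{lem:regret_exp1} and then reduce the result to a scalar recursion in the weights $\pi_k$.

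\textbf{Weights.} With $\alpha_k = \alpha$ and $\pi_0 = 1$, the relation $\pi_k(1-\alpha) = \pi_{k-1}$ gives
\[
\pi_k = (1-\alpha)^{-k},
\qquad
\alpha_k\pi_k = \alpha(1-\alpha)^{-k}.
\]
We first check the hypothesis of \Cref{lem:regret_exp1}. By the definition of $1/\alpha$ we have $\alpha T\cR\cG_1 \leq 1$. Also $K \geq 2/\alpha \geq 2T\cR\cG_1 \geq T$, using $\cR\cG_1 \geq 1$. So the lemma applies.

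\textbf{Basic inequality.} Plugging \Cref{lem:regret_exp1} into \Cref{lem:wd}, and using $f(x_0) - f^* \leq \cF$ (since $x_0 = 0 \in Q_\cR$), gives
\[
\pi_K\delta_K
\leq
\cF + \tfrac12\tsum_{k=1}^K \alpha\pi_k\delta_k
+ \cO\brs*{\cR\cG_0\mysqrt{\tsum_{k=1}^K\alpha^2\pi_k^2} + T\cR\cG_0\tsum_{k=1}^K\alpha^2\pi_k}.
\]

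\textbf{Main obstacle: the term $\tfrac12\sum\alpha\pi_k\delta_k$.} This term involves all earlier iterates, so the bound does not close at the single index $K$. I would handle it by noting that the argument of \Cref{lem:wd} and \Cref{lem:regret_exp1} works for every horizon $n \geq T$. Hence the same inequality holds with $K$ replaced by any $n \in \{T,\dots,K\}$. The early indices $k < T$ need a crude bound. The point $x_k$ is a convex combination of points of $Q_\cR$, so $\delta_k \leq \cF$. The corresponding contribution is at most $\tfrac12\sum_{k<T}\alpha\pi_k\cF \leq \cO(\cF)$, because $\alpha T \leq 1$ makes $\pi_k \leq e$ for $k \leq T$. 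This is absorbed into the $\cF$ term.

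\textbf{Discrete Grönwall step.} Define
\[
S_n = \tsum_{k\leq n}\alpha\pi_k\delta_k .
\]
Then $S_n - S_{n-1} = \alpha\pi_n\delta_n$, and the inequality at horizon $n$ reads $(S_n - S_{n-1})/\alpha \leq A_n + \tfrac12 S_n$, where $A_n$ is nondecreasing. Rearranging, $S_n(1-\alpha/2) \leq S_{n-1} + \alpha A_n$. Iterating this shows that $S_n$ grows at most like $\pi_n^{1/2}$ times $A_n$. Substituting back gives
\[
\pi_K\delta_K \leq \cO\brs*{A_K\,\pi_K^{1/2}}
\]
up to constants. If this route turns out too lossy, I would instead only use $\delta_k \leq \cF$ crudely for small $k$ and argue by strong induction that $\delta_k \leq c\,(A_k/\pi_k)$.

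\textbf{Geometric sums.} Since $\pi_k$ grows geometrically, we have
\[
\tsum_k \alpha^2\pi_k^2 \leq \cO(\alpha\pi_K^2),
\qquad
\tsum_k \alpha^2\pi_k \leq \cO(\alpha\pi_K).
\]
Dividing by $\pi_K$ yields
\[
\delta_K \lesssim \cF\pi_K^{-1/2} + \cR\cG_0\mysqrt{\alpha} + T\cR\cG_0\alpha .
\]

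\textbf{Parameter check.} The definition of $\alpha$ gives $\mysqrt{\alpha}\,\cR\cG_0 \leq \epsilon$ and $\alpha T\cR\cG_0 \leq \epsilon$. For the first term,
\[
\pi_K = (1-\alpha)^{-K} \geq e^{\alpha K} \geq \brs{e + \cF/\epsilon}^2,
\]
since $K \geq (2/\alpha)\ln(e + \cF/\epsilon)$. Therefore $\cF\pi_K^{-1/2} \leq \epsilon$, and altogether $\delta_K \leq \cO(\epsilon)$. The factor $2$ in $K$ is exactly what compensates for the square root lost in the Grönwall step caused by the $\tfrac12$ coefficient.
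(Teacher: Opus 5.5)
Your architecture is the paper's: \Cref{lem:wd} with \Cref{lem:regret_exp1} at every horizon $n\ge T$, a crude $\cF$-bound below $T$, a discrete Gr\"onwall step with exponent $1/2$ (the paper's \Cref{lem:tech} with $p=1/2$), and the factor $2$ in $K$ paying for the square root. The gap is in the Gr\"onwall step as you state it. Pulling out the nondecreasing $A_n$ gives $S_n\lesssim\pi_n^{1/2}A_n$ and $\pi_K\delta_K\le\cO(A_K\pi_K^{1/2})$. But $A_K$ contains the noise terms $\cR\cG_0\sqrt{\alpha}\,\pi_K+T\cR\cG_0\alpha\,\pi_K$, so dividing by $\pi_K$ yields only
\[
\delta_K\lesssim\cF\pi_K^{-1/2}+\bigl(\cR\cG_0\sqrt{\alpha}+T\cR\cG_0\alpha\bigr)\pi_K^{1/2}.
\]
Since $\pi_K^{1/2}\ge e+\cF/\epsilon$, the second term is in general of order $\epsilon\pi_K^{1/2}\ge e\epsilon+\cF$, which is no better than the trivial bound. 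Your displayed bound silently drops this $\pi_K^{1/2}$. The strong-induction fallback $\delta_k\le c\,A_k/\pi_k$ cannot close either. Through $\tfrac12 S_{n-1}$ it generates a term $\tfrac{c}{2}\alpha(n-1)\cF$, which exceeds $c\cF$ once $\alpha n\gtrsim 2$. Indeed, in the worst case of the recursion the $\cF$-part decays only like $\pi_n^{-1/2}$.

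The missing point is that each $A_k$ must be paired with its own amplification factor rather than replaced by $A_n$. Unroll exactly:
\[
S_n\le\sum_{k\le n}\alpha A_k(1-\alpha/2)^{-(n-k+1)}\le\sum_{k\le n}\alpha A_k(\pi_n/\pi_{k-1})^{1/2},
\]
using $(1-\alpha/2)^2\ge 1-\alpha$. Then apply your geometric-sum estimates at index $k$, together with the choice of $\alpha$, to get $A_k\le\cO(\cF)+\cO(\epsilon\pi_k)$. The constant part contributes $\cO\bigl(\alpha\cF\pi_n^{1/2}\sum_{j\ge0}(1-\alpha)^{j/2}\bigr)=\cO(\cF\pi_n^{1/2})$. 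The growing part contributes $\cO\bigl(\alpha\epsilon\pi_n^{1/2}\sum_{k\le n}\pi_k\pi_{k-1}^{-1/2}\bigr)=\cO(\epsilon\pi_n)$, because that sum is geometric with ratio $(1-\alpha)^{-1/2}$ and is dominated by its last term. Hence $\pi_K\delta_K\le A_K+\tfrac12 S_K=\cO(\cF\pi_K^{1/2}+\epsilon\pi_K)$, which gives your final bound. This is exactly the paper's \Cref{lem:tech} followed by its closing chain of inequalities, and with this correction the rest of your argument goes through.
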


\begin{corollary}<cor:exp1>
  Under the conditions of \Cref{thm:exp1}, to reach the precision $\E*{f(x_K) - f^*}\leq \epsilon$, it is sufficient to perform the following number of iterations:
  \begin{equation}
    K = \tilde\cO\brs*{[\cR\cG_1]^4 + \brs{\cR\cG_1}^3 \cdot \brs{\cR\cG_0/\epsilon} + \cR\cG_1 \cdot \brs{\cR\cG_0 / \epsilon}^2}
  \end{equation}
\end{corollary}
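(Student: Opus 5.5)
The corollary follows from \Cref{thm:exp1} by bounding $K$ explicitly. The key observation is that the exponential quantity $\cF$ enters only through a logarithm.

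First, recall that $T = \ceil{\hC\brs{\cR\cG_1}^2} = \cO\brs{\brs{\cR\cG_1}^2}$, using $\cR\cG_1 \geq 1$. Substituting this into the definition of $\alpha$ in \cref{eq:alpha_K1} gives
\begin{equation*}
  1/\alpha \leq T\cR\cG_1 + \brs{\cR\cG_0/\epsilon}^2 + T\cR\cG_0/\epsilon
  = \cO\brs*{\brs{\cR\cG_1}^3 + \brs{\cR\cG_0/\epsilon}^2 + \brs{\cR\cG_1}^2\brs{\cR\cG_0/\epsilon}}.
\end{equation*}

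Next, handle the logarithm. By \cref{eq:MF_bound} with $\cM_0 = \cG_0$ and $\cM_1 = \cG_1$ (valid by \Cref{rem:GM01}),
\begin{equation*}
  \cF/\epsilon \leq \brs{\cG_0/(\cG_1\epsilon)}\exp\brr{\cO\brs{\cR\cG_1}}.
\end{equation*}
Hence
\begin{equation*}
  \ln\brs{e + \cF/\epsilon} \leq \cO\brs{\cR\cG_1} + \ln\brs{e + \cR\cG_0/\epsilon},
\end{equation*}
where we used $\cG_0/(\cG_1\epsilon) \leq \cR\cG_0/\epsilon$, again by $\cR\cG_1 \geq 1$. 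The second term is polylogarithmic and is absorbed into $\tilde\cO$. The first term contributes one extra factor of $\cR\cG_1$.

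Finally, take $K = \ceil{(2/\alpha)\ln(e+\cF/\epsilon)}$ from \cref{eq:alpha_K1}. Multiplying the bound on $1/\alpha$ by $\cR\cG_1$ (plus logarithms) yields
\begin{equation*}
  K = \tilde\cO\brs*{\brs{\cR\cG_1}^4 + \brs{\cR\cG_1}^3\brs{\cR\cG_0/\epsilon} + \cR\cG_1\brs{\cR\cG_0/\epsilon}^2},
\end{equation*}
which is the claimed bound.

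The only delicate point is that one should not bound $\cF$ crudely. It must stay inside the logarithm, so that the exponential turns into the linear factor $\cO\brs{\cR\cG_1}$. The $\cO\brs{\epsilon}$ guarantee of \Cref{thm:exp1} is then rescaled to exactly $\epsilon$ by replacing $\epsilon$ with $\epsilon/c$ for a universal constant $c$, which changes the bound only by constant factors.
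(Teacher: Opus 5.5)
Your proof is correct and takes essentially the same route the paper intends: bound $T=\cO([\cR\cG_1]^2)$ inside $1/\alpha$, and apply \cref{eq:MF_bound} (via \Cref{rem:GM01}) only inside $\ln(e+\cF/\epsilon)$. There the exponential becomes an additive $\cO(\cR\cG_1)$ plus a polylogarithmic term, and multiplying out gives exactly the three stated terms, with the rescaling $\epsilon\to\epsilon/c$ handled correctly.
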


Unfortunately, a constant choice of $\alpha_k$ is no longer optimal when the target precision is below $\hepsilon$. However, this limitation can be addressed with a simple two-stage schedule for $\alpha_k$. We extend the regret bound in \cref{eq:regret_bound} to this schedule in \Cref{lem:regret_exp2}, and then derive the corresponding convergence result and explicit complexity bound in \Cref{thm:exp2} and \Cref{cor:exp2}. The resulting guarantee combines the best features of \Cref{alg:avg,alg:exp}: it achieves fast linear convergence in the first phase and optimal sublinear convergence in the second phase.

\begin{lemma}<lem:regret_exp2>
  Under the conditions of \Cref{thm:exp2}, the inequality in \cref{eq:regret_bound} holds for $K \geq T$.
\end{lemma}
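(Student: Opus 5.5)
We do not see the statement of \Cref{thm:exp2}, so we assume the natural two-stage schedule: $\alpha_k = \alpha$ constant, as in \cref{eq:alpha_T}, for $k \leq K_1$, after which $\alpha_k$ decreases like $1/(k - K_1 + k_0)$ with $k_0 \geq 1/\alpha$, so that $\alpha_k \leq \alpha$ for all $k$ and $\pi_k(1-\alpha_k)=\pi_{k-1}$. The plan is to repeat the proof of \Cref{lem:regret_exp1} and check that it uses only two properties of the schedule. The first is the uniform smallness $\alpha_k T\cR\cG_1 \leq 1$. The second is a slow-variation property of the weights $\alpha_k\pi_k$ over windows of length $T$: $\alpha_i\pi_i \leq \cO(1)\cdot \alpha_k\pi_k$ whenever $|i-k|\leq T$. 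Both hold for the constant stage. For the decaying stage, $\alpha_k\pi_k$ is nondecreasing, and $\alpha_{k+T}\pi_{k+T}/(\alpha_k\pi_k) \leq \exp(\sum \alpha_j) \leq e$, because $T\alpha_k\leq 1$.

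\textbf{Step 1 (stability).} By \cref{eq:wd}, $\norm{x_k - x_{k-1}} = \alpha_k\norm{z_k - x_{k-1}} \leq 2\cR\alpha_k$. Hence over any window of length $T$ the total movement is at most $2\cR T\max\alpha_j \leq 2/\cG_1$. By \Cref{lem:M01} with $\cM_1=\cG_1$ (using \Cref{rem:GM01}), the exponential factor is bounded by a constant. This gives $f(x_i)-f^* \leq \cO(1)\,[f(x_k)-f^*] + \cO(\cR\cG_0 T\alpha)$ for $|i-k|\leq T$, where the additive term comes from the $\cM_0$ part.

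\textbf{Step 2 (regret).} Start from \Cref{ass:regret} and split $g_k=\alpha_k\pi_k(u_k+v_k)$. The $u$-part contributes $\cC\cR\cG_0\sqrt{\sum\alpha_k^2\pi_k^2}$ in expectation, by Jensen. For the $v$-part, bound $\cR\cG_1\sqrt{\sum\alpha_k^2\pi_k^2\Delta_k^2}$, where $\Delta_k = f(x_k)-f^*$; note $z_k$ enters only through $x_k$, since $f_k(z_k)=\pi_k f(x_k)$. As in \Cref{lem:regret_exp1}, partition $\{1,\dots,K\}$ into blocks of length $T$ and use $\sqrt{\sum a_k^2}\leq\sum_{\text{blocks}}\sqrt{\sum_{k\in B} a_k^2}$. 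Within each block, Step 1 lets us replace every $\Delta_k$ by a block average up to constants, so $\sqrt{\sum_{B}\alpha_k^2\pi_k^2\Delta_k^2} \lesssim T^{-1/2}\sum_B \alpha_k\pi_k\Delta_k + \sqrt{T}\,\cR\cG_0 T\alpha\sum_B\alpha_k\pi_k$. This step uses the slow-variation property, because the weights $\alpha_k\pi_k$ must be comparable across the block. Multiplying by $\cR\cG_1$ and using $T=\lceil\hC(\cR\cG_1)^2\rceil$, the first term becomes at most $\tfrac12\sum\alpha_k\pi_k\Delta_k$ once $\hC$ is large. Bounding the second term by $\cO(T\cR\cG_0\sum\alpha_k^2\pi_k)$ requires the additive term in Step 1 to be the local $\alpha_i$ rather than the global $\alpha$. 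This again follows from slow variation, since $\alpha_i\leq e\,\alpha_k$ on the window. Taking expectations then gives \cref{eq:regret_bound}.

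\textbf{Main obstacle.} The delicate point is the block near the switch time $K_1$, where the schedule changes form. There I would verify slow variation explicitly: at the switch $\alpha_k$ is continuous (with $k_0=1/\alpha$), and afterwards it decays by at most a constant factor within $T$ steps. The other delicate point is the expectation handling: Step 1 holds almost surely, so the block bounds are pathwise, and expectation is taken only at the end. If slow variation fails across the switch, the fallback is to split the sum at $K_1$ and apply \Cref{lem:regret_exp1} to each stage separately, absorbing the single boundary block into the $\cO(\cdot)$ terms.
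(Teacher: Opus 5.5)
Your overall strategy is the same as the paper's, which reruns the proof of \Cref{lem:regret_exp1}: blocks of length $T$, stability over a block via \Cref{lem:M01}, and absorption through the factor $\cR\cG_1/\sqrt{T}$. The gap is in how you handle the non-constant $\alpha_k$, because the schedule in \cref{eq:alpha2} is not the one you assumed. There, $\alpha_k = 1/(T\cR\cG_1)$ for $k\le ST$, and it then jumps down to $1/\max\{T\cR\cG_1,(\cR\cG_0/\epsilon)^2,T\cR\cG_0/\epsilon\}$, which can be smaller by an arbitrarily large factor. Both facts you rely on then fail for every pair $i\le ST<k$: the two-sided slow variation $\alpha_i\pi_i\le\cO(1)\,\alpha_k\pi_k$ for $|i-k|\le T$, and the bound $\alpha_i\le e\,\alpha_k$ you use for the additive term.

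For blocks lying inside a single stage your argument is fine, since $\alpha_k$ is constant there. The switch itself cannot be avoided. The lemma is claimed for every $K\ge T$ and is invoked at every $k$ in the proof of \Cref{thm:exp2}, including $ST<K<(S+1)T$. In that range the last block straddles the switch, and the second stage alone has fewer than $T$ indices, so \Cref{lem:regret_exp1} cannot be applied to it. Your fallback does not repair this. A boundary block cannot be absorbed into the $\cO(\cdot)$ terms of \cref{eq:regret_bound}, because those terms do not involve $f(x_k)-f^*$; bounding the gaps by $\cF$ would introduce a quantity absent from the statement. The contribution $\cR\cG_1\sqrt{\sum_{k\in B}\alpha_k^2\pi_k^2[f(x_k)-f^*]^2}$ must instead be charged to $\tfrac12\sum_k\alpha_k\pi_k\E{f(x_k)-f^*}$ with the $1/\sqrt{T}$ gain. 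That works only if $\Omega(T)$ indices of the block carry weight comparable to the block maximum.

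The missing idea, which the paper uses, is a one-sided comparison together with alignment of the switch with the blocks. Anchor each block $\cK_s$ at its left endpoint $k_s$. Since the schedule is nonincreasing, $\alpha_k\le\alpha_{k_s}$ and $\pi_k\le\cO(\pi_{k_s})$ on $\cK_s$. This gives $\norm{x_k-x_{k_s}}\le\cO(\alpha_{k_s}T\cR)\le\cO(1/\cG_1)$. It also reduces the $v$-part to $\sum_s\sqrt{T}\alpha_{k_s}\pi_{k_s}[f(x_{k_s})-f^*]$ plus $(\cG_0/\cG_1)\sqrt{\sum_k\alpha_k^2\pi_k^2}$.

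For the reverse direction, use that $ST$ is a multiple of $T$. Then the first $T$ indices of every block, including a straddling last block, share the value $\alpha_{k_s}$. Hence $T\alpha_{k_s}\le\sum_{k\in\cK_s}\alpha_k$ and $\sum_{k\in\cK_s}\alpha_{k_s}^2\le\cO(\sum_{k\in\cK_s}\alpha_k^2)$. The first inequality replaces slow variation in the averaging step. The second, combined with $\alpha_k\alpha_{k_s}\le\alpha_k^2+\alpha_{k_s}^2$, turns the additive $\alpha_{k_s}T\cR\cG_0$ error into $\sqrt{T}\cR^2\cG_0\cG_1\sum_k\alpha_k^2\pi_k\le T\cR\cG_0\sum_k\alpha_k^2\pi_k$. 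With these two facts in place of slow variation, your block argument goes through.

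There is also a smaller slip. Step~1's additive error $c=\cO(\cR\cG_0T\alpha)$ contributes $c\sqrt{\sum_{k\in B}\alpha_k^2\pi_k^2}\lesssim (c/\sqrt{T})\sum_{k\in B}\alpha_k\pi_k=\sqrt{T}\cR\cG_0\alpha\sum_{k\in B}\alpha_k\pi_k$. You wrote $\sqrt{T}\cR\cG_0T\alpha\sum_{k\in B}\alpha_k\pi_k$ instead. With that extra factor $T$, multiplying by $\cR\cG_1$ would overshoot the target $T\cR\cG_0\sum_k\alpha_k^2\pi_k$ by a factor of order $T$.
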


\begin{theorem}<thm:exp2>
  Let \Cref{ass:regret,ass:G01} hold, and assume $\cR\cG_1 \geq 1$. Let $\alpha_k$ be defined as follows:
  \begin{equation}\label{eq:alpha2}
    1/\alpha_k =
    \begin{cases}
      T\cR\cG_1 & k \leq S T \\
      \max\brf*{T\cR\cG_1, \brs{\cR\cG_0/\epsilon}^2, \brs{T\cR\cG_0/\epsilon}}  & k > S T
    \end{cases},
  \end{equation}
  where $T$ is defined in \cref{eq:alpha_T}, and let $\pi_0 = 1$. Let $K,S \in \N$ be defined as follows:
  \begin{equation}\label{eq:KS2}
    K = TS + \ceil*{\brs{2/\alpha_{TS+1}} \ln \brs{e + \brs{\cG_0/(\cG_1\epsilon)}}}
    ,\quad
    S = \ceil*{\brs{2/(\alpha_1T)} \ln \brs{e + \brs{\cF\cG_1/\cG_0}}}.
  \end{equation}
  Then, the inequality $\E*{f(x_K) - f^*}\leq \cO\brs{\epsilon}$ holds.
\end{theorem}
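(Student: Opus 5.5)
The plan is to combine \Cref{lem:wd} with \Cref{lem:regret_exp2}, exactly as for \Cref{thm:exp1}, and then unroll the resulting recursion separately over the two stages of the schedule in \cref{eq:alpha2}. Since $\pi_k(1-\alpha_k)=\pi_{k-1}$ and $\pi_0=1$, we have $\pi_k=\prod_{i\le k}(1-\alpha_i)^{-1}$. Write $\delta_k=\E{f(x_k)-f^*}$.

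\textbf{Step 1: a recursion.} Plugging \cref{eq:regret_bound} into \Cref{lem:wd} gives, for every $K\ge T$,
\begin{equation*}
  \pi_K\delta_K \le \delta_0 + \tfrac12\tsum_{k=1}^K\alpha_k\pi_k\delta_k + \cO\brs*{\cR\cG_0\mysqrt{\tsum_{k=1}^K\alpha_k^2\pi_k^2}+T\cR\cG_0\tsum_{k=1}^K\alpha_k^2\pi_k}.
\end{equation*}
The term $\tfrac12\sum\alpha_k\pi_k\delta_k$ must be absorbed. The clean way is to note that $\alpha_k\pi_k=\pi_k-\pi_{k-1}$. Hence the inequality says that $\pi_K\delta_K$ is bounded by a Gronwall-type expression, and a discrete Gronwall argument should give
\begin{equation*}
  \pi_K\delta_K \lesssim \mysqrt{\pi_K}\,\brs{\delta_0+\text{noise terms}}.
\end{equation*}
I expect this absorption to be the main obstacle. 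The cleaner route is probably to apply \Cref{lem:wd} with the weights $\mysqrt{\pi_k}$, but since the lemma is stated for these particular $\pi_k$, I would instead use Gronwall. Then $\prod(1+\alpha_k/2)\approx\pi_K^{1/2}$, so the effective contraction factor is $\pi_K^{-1/2}\approx\exp(-\tfrac12\sum\alpha_k)$. The factor $2/\alpha$ appearing in $K$ and $S$ is consistent with exactly this halved exponent.

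\textbf{Step 2: first stage, $k\le ST$, with $\alpha=1/(T\cR\cG_1)$.} Here $\pi_{ST}=(1-\alpha)^{-ST}$. The noise terms are bounded using $\sum\alpha^2\pi_k^2\lesssim\alpha\pi^2$ and $\sum\alpha^2\pi_k\lesssim\alpha\pi$. After dividing by $\pi$ they give
\begin{equation*}
  \cR\cG_0\mysqrt{\alpha}+T\cR\cG_0\alpha\lesssim \cG_0/\cG_1,
\end{equation*}
using $T\cR\cG_1\ge1$. The initial term $\delta_0\le\cF$ is damped by $\exp(-\alpha ST/2)$. The choice of $S$ in \cref{eq:KS2} makes this at most $\cG_0/\cG_1$. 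Thus after the first stage $\delta_{ST}\lesssim\hepsilon=\cG_0/\cG_1$, with a linear rate.

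\textbf{Step 3: second stage.} I restart the argument from $x_{ST}$. This is valid because both \Cref{lem:wd} and the regret bound telescope, so the part $k\le ST$ can be folded into a prefactor $\pi_{ST}$. With the smaller constant $\alpha'=\alpha_{TS+1}$, the same computation gives
\begin{equation*}
  \delta_K\lesssim \delta_{ST}\exp(-\alpha'(K-ST)/2)+\cR\cG_0\mysqrt{\alpha'}+T\cR\cG_0\alpha'.
\end{equation*}
The definition of $\alpha'$ makes the last two terms at most $\cO(\epsilon)$. The choice of $K-ST$ together with $\delta_{ST}\lesssim\cG_0/\cG_1$ makes the first term at most $\epsilon$. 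There is one bookkeeping point to check: the noise sums accumulated during the first stage are multiplied by $\pi_{ST}/\pi_K$. They therefore decay like $\cG_0/\cG_1$ times the same exponential factor and pose no problem.
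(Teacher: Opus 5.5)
Your plan is sound and is essentially the paper's proof. It has the same three ingredients: the recursion from \Cref{lem:wd} and \Cref{lem:regret_exp2}; a discrete Gronwall step, which is exactly \Cref{lem:tech} with $p=1/2$ since $(1-\alpha_k/2)^{-1}\le(1-\alpha_k)^{-1/2}$; and two-stage bounds $\cO(\pi_k\cG_0/\cG_1)$ and then $\cO(\epsilon\pi_k)$ on the accumulated noise, with $S$ and $K$ chosen so that $\cF/\sqrt{\pi_{ST}}\le\cG_0/\cG_1$ and $(\cG_0/\cG_1)\sqrt{\pi_{ST}/\pi_K}\le\epsilon$.

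Two bookkeeping points need tightening.
\begin{itemize}
\item You state the recursion only for $k\ge T$, but the Gronwall step needs it for all $k\ge 1$. For $k<T$ the paper uses the trivial bound $\pi_k\E{f(x_k)-f^*}\le e\cF$.
\item The second stage is not a true restart, because \Cref{ass:regret} controls only the regret accumulated from $k=1$. What carries over is $\E{f(x_0)-f^*}+\tfrac12\tsum_{k\le ST}\alpha_k\pi_k\E{f(x_k)-f^*}$ plus the stage-one noise, not $\pi_{ST}\E{f(x_{ST})-f^*}$. Your Step~2 bounds this carried quantity by $\cO(\pi_{ST}\cG_0/\cG_1)$. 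After the second Gronwall step its contribution to $\E{f(x_K)-f^*}$ is $\cO((\cG_0/\cG_1)\sqrt{\pi_{ST}/\pi_K})$, a factor $\sqrt{\pi_{ST}/\pi_K}$ rather than $\pi_{ST}/\pi_K$.
\end{itemize}
This is why the paper avoids the restart and simply applies \Cref{lem:tech} once over the whole horizon, with a piecewise bound on the noise term.
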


\begin{corollary}<cor:exp2>
  Under the conditions of \Cref{thm:exp2}, to reach the precision $\E*{f(x_K) - f^*}\leq \epsilon$, it is sufficient to perform the following number of iterations:
  \begin{equation}
    K = \tilde\cO\brs*{[\cR\cG_1]^4 + \brs{\cR\cG_0 / \epsilon}^2}.
  \end{equation}
\end{corollary}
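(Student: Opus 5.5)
The corollary follows from \Cref{thm:exp2} by evaluating the iteration count $K = TS + \ceil{(2/\alpha_{TS+1})\ln(e+\cG_0/(\cG_1\epsilon))}$ from \cref{eq:KS2}. Throughout we use $T = \ceil{\hC(\cR\cG_1)^2} = \cO\brs{(\cR\cG_1)^2}$ (since $\cR\cG_1\ge 1$) and the parameter choice \cref{eq:alpha2}. We bound the two stages separately and then absorb logarithmic factors into $\tilde\cO$.

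\textbf{First stage.} Here $1/\alpha_1 = T\cR\cG_1$, so $2/(\alpha_1 T) = 2\cR\cG_1$ and
\begin{equation*}
  S = \cO\brs*{\cR\cG_1 \ln\brs{e + \cF\cG_1/\cG_0}}.
\end{equation*}
We need to show the logarithm is only polylogarithmic in the problem constants. By \cref{eq:MF_bound} (with $\cM_0=\cG_0$, $\cM_1=\cG_1$ via \Cref{rem:GM01}),
\begin{equation*}
  \cF\cG_1/\cG_0 \le \exp\brr{\cO\brs{\cR\cG_1}} - 1,
\end{equation*}
so $\ln(e+\cF\cG_1/\cG_0) = \cO\brs{\cR\cG_1}$. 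This is the step where the exponential from \cref{eq:MF_bound} becomes polynomial after taking the logarithm. It gives $S = \cO\brs{(\cR\cG_1)^2}$ and hence
\begin{equation*}
  TS = \cO\brs{(\cR\cG_1)^4}.
\end{equation*}
A cruder bound that keeps the logarithm explicitly, $TS = \tilde\cO\brs{(\cR\cG_1)^3}$, would only hold if one regards $\ln(\cF\cG_1/\cG_0)$ as polylogarithmic, and the paper's $\tilde\cO$ convention does not obviously permit that. Using the $\cO\brs{\cR\cG_1}$ bound on the logarithm is safe and matches the stated $[\cR\cG_1]^4$ term.

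\textbf{Second stage.} The number of iterations is $\cO\brs{(1/\alpha_{TS+1})\ln(e+\cG_0/(\cG_1\epsilon))}$, and the logarithm is polylogarithmic in the constants and $\epsilon$, so it is hidden by $\tilde\cO$. Moreover
\begin{equation*}
  1/\alpha_{TS+1} \le T\cR\cG_1 + (\cR\cG_0/\epsilon)^2 + T\cR\cG_0/\epsilon.
\end{equation*}
The first term is $\cO\brs{(\cR\cG_1)^3}$. For the cross term we use AM--GM:
\begin{equation*}
  T\cR\cG_0/\epsilon = \cO\brs{(\cR\cG_1)^2\cdot \cR\cG_0/\epsilon} \le \cO\brs{(\cR\cG_1)^4 + (\cR\cG_0/\epsilon)^2}.
\end{equation*}

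Summing the two stages gives $K = \tilde\cO\brs{[\cR\cG_1]^4 + [\cR\cG_0/\epsilon]^2}$. Finally, \Cref{thm:exp2} guarantees $\E{f(x_K)-f^*} \le \cO\brs{\epsilon}$, so rescaling $\epsilon$ by a universal constant yields precision $\epsilon$ at the cost of only a constant factor in $K$.
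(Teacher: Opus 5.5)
Your proposal is correct and matches how the paper obtains this corollary. The paper gives no separate proof; it reads the bound off from the parameters in \cref{eq:alpha2,eq:KS2}, using \cref{eq:MF_bound} to turn $\ln(e+\cF\cG_1/\cG_0)$ into $\cO\brs{\cR\cG_1}$ so that $TS=\cO\brs{[\cR\cG_1]^4}$, and absorbing the cross term $T\cR\cG_0/\epsilon$ by AM--GM, which is the same bookkeeping that produces \Cref{cor:exp1}.
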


\subsection{Discussion}

We conclude this section by summarizing the results obtained above and discussing them in the context of modern literature.

\textbf{Advanced vs classical averaging.}
Several advanced averaging schemes for convex optimization have been developed in recent years, including primal averaging \citep{nesterov2015quasi,tao2018primal}, online-to-batch conversion \citep{cutkosky2019anytime}, and other schemes \citep{defazio2024road}. Although these schemes are aesthetically appealing \citep{cutkosky2019anytime} and can improve practical performance \citep{defazio2024road}, there is currently no theoretical evidence that they can lead to improved convergence rates compared to the classical Polyak--Ruppert averaging \citep{polyak1990new,ruppert1988efficient} which is used, for instance, in \Cref{thm:SGD,thm:AdaGrad}; refer to \citet[eqs.~1 and~2]{defazio2024road} for the definition. To the best of our knowledge, \Cref{thm:exp2} is the first result of this kind: it shows that the averaging scheme in \cref{eq:wd} yields significantly improved convergence guarantees under \Cref{ass:G01}, which allows unbounded gradients.

\textbf{Exponential decay in Adam/AdamW.} The original Adam paper \citep{kingma2014adam} interprets the exponentially weighted accumulations in \Cref{line:exp:m,line:exp:v} of \Cref{alg:exp} as estimates of the first and second moments of the gradient obtained via exponential moving averages. Unfortunately, this interpretation does not provide meaningful insight into the convergence properties of Adam/AdamW.

An alternative interpretation was proposed by \citet{defossez2020simple} and later revisited by \citet{chezhegov2024clipping}. To analyze the convergence of Adam, they compare AdaGrad with its counterpart, RMSProp \citep{tieleman2012lecture}, which replaces AdaGrad's cumulative sum of squared gradients (see, e.g., \cref{eq:AdaGrad_params}) with an exponentially weighted sum. \citet[Section~4.3]{defossez2020simple} suggest that RMSProp relates to AdaGrad in the same way that constant-stepsize SGD relates to decaying-stepsize SGD. This interpretation yields meaningful convergence guarantees for RMSProp; however, these guarantees do not improve upon those for AdaGrad and therefore do not explain the benefits of exponential decay. In contrast, our analysis in \Cref{sec:exp} shows that understanding Adam or AdamW through the RMSProp--AdaGrad analogy may be inaccurate: in the unbounded-gradient setting covered by \Cref{ass:G01}, AdamW obtains the improved convergence guarantees in \Cref{thm:exp2} by essentially exploiting iterate stability, which is absent in AdaGrad and RMSProp.

\textbf{Non-convex analysis of Adam.}
Another, more accurate interpretation of Adam with exponential decay was proposed by \citet{ahn2024understanding,ahn2024adam}. They analyze the convergence of Adam for non-convex functions using the exponentiated online-to-non-convex conversion \citep{zhang2024random}. Their approach shares some similarities with ours, as we also use a form of conversion from convex optimization to online optimization, as described in \Cref{sec:wd}. However, their analysis does not show that Adam can improve upon the convergence rates of SGD with momentum \citep{cutkosky2023optimal}. One reason is that SGD with momentum is already optimal for globally Lipschitz functions \citep{cutkosky2023optimal}, which highlights the importance of realistic assumptions like \Cref{ass:G01}. Another reason is that their analysis is limited to general non-convex functions. Therefore, they can only obtain vague first-order stationarity guarantees, which typically depend on the initial function gap $f(x_0) - f^*$. This gap can be exponential due to \Cref{lem:M01}, making exponential factors in the resulting iteration complexities unavoidable. In contrast, in \Cref{thm:exp1,thm:exp2}, we obtain global convergence rates for convex functions with only logarithmic dependence on the function gap $\cF$, which is essential for avoiding exponential factors in the complexity and improving upon other methods. We highlight that such theoretical results for convex functions are relevant to machine learning practice, as convexity-like properties have been observed in neural-network training despite global nonconvexity \citep{kleinberg2018alternative,zhou2019sgd,tran2024reevaluating}. We also extend our results to the quasar-convex setting \citep{hinder2020near} in \Cref{sec:quasar}, bringing them even closer to practice.

It is also worth noting that general non-convex analysis can limit the possibility of understanding why Adam outperforms other methods even without assuming that the objective function is globally Lipschitz. For instance, \citet{li2023convergence} derived the convergence guarantees for Adam on non-convex functions under the generalized smoothness assumption, but acknowledged that their results do not improve upon the convergence rate of SGD \citep{li2023convex}. In contrast, in \Cref{sec:uni}, we derive convergence guarantees for \Cref{alg:exp} on convex functions under generalized H\"older smoothness, and these guarantees also improve upon the results for other methods.

\textbf{Non-monotone stepsizes.}
It is well known that AdaGrad has the limitation that its stepsize are always non-increasing \citep{defazio2022grad}. On the other hand, the stepsize in \Cref{line:exp:x} of \Cref{alg:exp} is proportional to $1/\mysqrt{v_k}$ and can be non-monotone due to the exponential decay in \Cref{line:exp:v}. Although common intuition suggests that such non-monotonicity can be beneficial because it allows the method to ``forget'' gradients that are too ``old'', to the best of our knowledge, there are no theoretical results supporting this intuition. Our results in \Cref{thm:exp1,thm:exp2} support this intuition: the stepsize in \Cref{alg:exp} may be exponentially small in early iterations, because the gradients may be exponentially large due to the bound in \cref{eq:MF_bound}; however, exponential decay allows the method to adapt to these large gradients and yields convergence rates without exponential factors, as discussed in \Cref{sec:exp}.

\newpage

\cite*{}

\bibliographystyle{unsrtnat}
\bibliography{references}

@book{rockafellar1997convex,
  title     = {Convex analysis},
  author    = {Rockafellar, R Tyrrell},
  volume    = {28},
  year      = {1997},
  publisher = {Princeton university press}
}

@article{hagood2006recovering,
  title     = {Recovering a function from a Dini derivative},
  author    = {Hagood, John W and Thomson, Brian S},
  journal   = {The American Mathematical Monthly},
  volume    = {113},
  number    = {1},
  pages     = {34--46},
  year      = {2006},
  publisher = {Taylor \& Francis}
}

@article{duchi2011adaptive,
  title   = {Adaptive subgradient methods for online learning and stochastic optimization.},
  author  = {Duchi, John and Hazan, Elad and Singer, Yoram},
  journal = {Journal of machine learning research},
  volume  = {12},
  number  = {7},
  year    = {2011}
}

@article{streeter2010less,
  title   = {Less regret via online conditioning},
  author  = {Streeter, Matthew and McMahan, H Brendan},
  journal = {arXiv preprint arXiv:1002.4862},
  year    = {2010}
}

@inproceedings{cutkosky2019anytime,
  title        = {Anytime online-to-batch, optimism and acceleration},
  author       = {Cutkosky, Ashok},
  booktitle    = {International conference on machine learning},
  pages        = {1446--1454},
  year         = {2019},
  organization = {PMLR}
}

@article{orabona2018scale,
  title     = {Scale-free online learning},
  author    = {Orabona, Francesco and P{\'a}l, D{\'a}vid},
  journal   = {Theoretical Computer Science},
  volume    = {716},
  pages     = {50--69},
  year      = {2018},
  publisher = {Elsevier}
}

@article{ahn2024understanding,
  title   = {Understanding Adam optimizer via online learning of updates: Adam is FTRL in disguise},
  author  = {Ahn, Kwangjun and Zhang, Zhiyu and Kook, Yunbum and Dai, Yan},
  journal = {arXiv preprint arXiv:2402.01567},
  year    = {2024}
}

@article{jiang2026adaptive,
  title   = {Adaptive Matrix Online Learning through Smoothing with Guarantees for Nonsmooth Nonconvex Optimization},
  author  = {Jiang, Ruichen and Mhammedi, Zakaria and Mohri, Mehryar and Mokhtari, Aryan},
  journal = {arXiv preprint arXiv:2602.08232},
  year    = {2026}
}

@article{kovalev2026optimal,
  title   = {Optimal Projection-Free Adaptive SGD for Matrix Optimization},
  author  = {Kovalev, Dmitry},
  journal = {arXiv preprint arXiv:2604.02505},
  year    = {2026}
}

@inproceedings{hinder2020near,
  title        = {Near-optimal methods for minimizing star-convex functions and beyond},
  author       = {Hinder, Oliver and Sidford, Aaron and Sohoni, Nimit},
  booktitle    = {Conference on learning theory},
  pages        = {1894--1938},
  year         = {2020},
  organization = {PMLR}
}

@book{clarke1990optimization,
  title     = {Optimization and nonsmooth analysis},
  author    = {Clarke, Frank H},
  year      = {1990},
  publisher = {SIAM}
}

@inproceedings{zhang2020complexity,
  title        = {Complexity of finding stationary points of nonconvex nonsmooth functions},
  author       = {Zhang, Jingzhao and Lin, Hongzhou and Jegelka, Stefanie and Sra, Suvrit and Jadbabaie, Ali},
  booktitle    = {International Conference on Machine Learning},
  pages        = {11173--11182},
  year         = {2020},
  organization = {PMLR}
}

@article{shapiro1990concepts,
  title     = {On concepts of directional differentiability},
  author    = {Shapiro, Alexander},
  journal   = {Journal of optimization theory and applications},
  volume    = {66},
  number    = {3},
  pages     = {477--487},
  year      = {1990},
  publisher = {Springer}
}

@inproceedings{glorot2011deep,
  title        = {Deep sparse rectifier neural networks},
  author       = {Glorot, Xavier and Bordes, Antoine and Bengio, Yoshua},
  booktitle    = {Proceedings of the fourteenth international conference on artificial intelligence and statistics},
  pages        = {315--323},
  year         = {2011},
  organization = {JMLR Workshop and Conference Proceedings}
}

@article{loshchilov2017decoupled,
  title   = {Decoupled weight decay regularization},
  author  = {Loshchilov, Ilya and Hutter, Frank},
  journal = {arXiv preprint arXiv:1711.05101},
  year    = {2017}
}

@article{kingma2014adam,
  title   = {Adam: A method for stochastic optimization},
  author  = {Kingma, Diederik P and Ba, Jimmy},
  journal = {arXiv preprint arXiv:1412.6980},
  year    = {2014}
}

@article{nemirovski2009robust,
  title     = {Robust stochastic approximation approach to stochastic programming},
  author    = {Nemirovski, Arkadi and Juditsky, Anatoli and Lan, Guanghui and Shapiro, Alexander},
  journal   = {SIAM Journal on optimization},
  volume    = {19},
  number    = {4},
  pages     = {1574--1609},
  year      = {2009},
  publisher = {SIAM}
}

@article{tran2024reevaluating,
  title   = {Reevaluating Theoretical Analysis Methods for Optimization in Deep Learning},
  author  = {Tran, Hoang and Zhang, Qinzi and Cutkosky, Ashok},
  journal = {arXiv preprint arXiv:2407.01825},
  year    = {2024}
}

@book{nesterov2018lectures,
  title     = {Lectures on convex optimization},
  author    = {Nesterov, Yurii and others},
  volume    = {137},
  year      = {2018},
  publisher = {Springer}
}

@inproceedings{cutkosky2023optimal,
  title        = {Optimal stochastic non-smooth non-convex optimization through online-to-non-convex conversion},
  author       = {Cutkosky, Ashok and Mehta, Harsh and Orabona, Francesco},
  booktitle    = {International Conference on Machine Learning},
  pages        = {6643--6670},
  year         = {2023},
  organization = {PMLR}
}

@book{shor2012minimization,
  title     = {Minimization methods for non-differentiable functions},
  author    = {Shor, Naum Zuselevich},
  year      = {2012},
  publisher = {Springer Science \& Business Media}
}

@book{goodfellow2016deep,
  title     = {Deep Learning},
  author    = {Goodfellow, Ian and Bengio, Yoshua and Courville, Aaron},
  publisher = {MIT Press},
  year      = {2016}
}

@inproceedings{he2015delving,
  title     = {Delving deep into rectifiers: Surpassing human-level performance on imagenet classification},
  author    = {He, Kaiming and Zhang, Xiangyu and Ren, Shaoqing and Sun, Jian},
  booktitle = {Proceedings of the IEEE international conference on computer vision},
  pages     = {1026--1034},
  year      = {2015}
}

@article{tieleman2012lecture,
  title   = {Lecture 6.5-rmsprop: Divide the gradient by a running average of its recent magnitude},
  author  = {Tieleman, Tijmen},
  journal = {COURSERA: Neural networks for machine learning},
  volume  = {4},
  number  = {2},
  pages   = {26},
  year    = {2012}
}

@inproceedings{bernstein2018signsgd,
  title        = {signSGD: Compressed optimisation for non-convex problems},
  author       = {Bernstein, Jeremy and Wang, Yu-Xiang and Azizzadenesheli, Kamyar and Anandkumar, Animashree},
  booktitle    = {International conference on machine learning},
  pages        = {560--569},
  year         = {2018},
  organization = {PMLR}
}

@article{robbins1951stochastic,
  title     = {A stochastic approximation method},
  author    = {Robbins, Herbert and Monro, Sutton},
  journal   = {The annals of mathematical statistics},
  pages     = {400--407},
  year      = {1951},
  publisher = {JSTOR}
}

@article{zhang2019gradient,
  title   = {Why gradient clipping accelerates training: A theoretical justification for adaptivity},
  author  = {Zhang, Jingzhao and He, Tianxing and Sra, Suvrit and Jadbabaie, Ali},
  journal = {arXiv preprint arXiv:1905.11881},
  year    = {2019}
}

@article{gorbunov2024methods,
  title   = {Methods for convex $(l\_0, l\_1) $-smooth optimization: Clipping, acceleration, and adaptivity},
  author  = {Gorbunov, Eduard and Tupitsa, Nazarii and Choudhury, Sayantan and Aliev, Alen and Richt{\'a}rik, Peter and Horv{\'a}th, Samuel and Tak{\'a}{\v{c}}, Martin},
  journal = {arXiv preprint arXiv:2409.14989},
  year    = {2024}
}

@article{polyak1969minimization,
  title     = {Minimization of unsmooth functionals},
  author    = {Polyak, Boris Teodorovich},
  journal   = {USSR Computational Mathematics and Mathematical Physics},
  volume    = {9},
  number    = {3},
  pages     = {14--29},
  year      = {1969},
  publisher = {Elsevier}
}

@article{bubeck2015convex,
  title     = {Convex optimization: Algorithms and complexity},
  author    = {Bubeck, S{\'e}bastien},
  journal   = {Foundations and trends in Machine Learning},
  volume    = {8},
  number    = {3-4},
  pages     = {231--357},
  year      = {2015},
  publisher = {Emerald Publishing limited}
}

@article{nesterov2015quasi,
  title     = {Quasi-monotone subgradient methods for nonsmooth convex minimization},
  author    = {Nesterov, Yu and Shikhman, Vladimir},
  journal   = {Journal of Optimization Theory and Applications},
  volume    = {165},
  number    = {3},
  pages     = {917--940},
  year      = {2015},
  publisher = {Springer}
}

@article{tao2018primal,
  title     = {Primal averaging: A new gradient evaluation step to attain the optimal individual convergence},
  author    = {Tao, Wei and Pan, Zhisong and Wu, Gaowei and Tao, Qing},
  journal   = {IEEE transactions on cybernetics},
  volume    = {50},
  number    = {2},
  pages     = {835--845},
  year      = {2018},
  publisher = {IEEE}
}

@article{ahn2024adam,
  title   = {Adam with model exponential moving average is effective for nonconvex optimization},
  author  = {Ahn, Kwangjun and Cutkosky, Ashok},
  journal = {Advances in Neural Information Processing Systems},
  volume  = {37},
  pages   = {94909--94933},
  year    = {2024}
}

@article{defazio2024road,
  title   = {The road less scheduled},
  author  = {Defazio, Aaron and Yang, Xingyu and Mehta, Harsh and Mishchenko, Konstantin and Khaled, Ahmed and Cutkosky, Ashok},
  journal = {Advances in Neural Information Processing Systems},
  volume  = {37},
  pages   = {9974--10007},
  year    = {2024}
}

@article{defossez2020simple,
  title   = {A simple convergence proof of adam and adagrad},
  author  = {D{\'e}fossez, Alexandre and Bottou, L{\'e}on and Bach, Francis and Usunier, Nicolas},
  journal = {arXiv preprint arXiv:2003.02395},
  year    = {2020}
}

@article{chezhegov2024clipping,
  title   = {Clipping improves adam-norm and adagrad-norm when the noise is heavy-tailed},
  author  = {Chezhegov, Savelii and Klyukin, Yaroslav and Semenov, Andrei and Beznosikov, Aleksandr and Gasnikov, Alexander and Horv{\'a}th, Samuel and Tak{\'a}{\v{c}}, Martin and Gorbunov, Eduard},
  journal = {arXiv preprint arXiv:2406.04443},
  year    = {2024}
}

@techreport{ruppert1988efficient,
  title       = {Efficient estimations from a slowly convergent Robbins-Monro process},
  author      = {Ruppert, David},
  year        = {1988},
  institution = {Cornell University Operations Research and Industrial Engineering}
}

@article{polyak1990new,
  title   = {New stochastic approximation type procedures},
  author  = {Polyak, Boris T},
  journal = {Automat. i Telemekh},
  volume  = {7},
  number  = {98-107},
  pages   = {2},
  year    = {1990}
}

@article{defazio2022grad,
  title   = {Grad-GradaGrad? A non-monotone adaptive stochastic gradient method},
  author  = {Defazio, Aaron and Zhou, Baoyu and Xiao, Lin},
  journal = {arXiv preprint arXiv:2206.06900},
  year    = {2022}
}

@article{zhang2024random,
  title   = {Random scaling and momentum for non-smooth non-convex optimization},
  author  = {Zhang, Qinzi and Cutkosky, Ashok},
  journal = {arXiv preprint arXiv:2405.09742},
  year    = {2024}
}

@article{zhou2019sgd,
  title   = {Sgd converges to global minimum in deep learning via star-convex path},
  author  = {Zhou, Yi and Yang, Junjie and Zhang, Huishuai and Liang, Yingbin and Tarokh, Vahid},
  journal = {arXiv preprint arXiv:1901.00451},
  year    = {2019}
}

@inproceedings{kleinberg2018alternative,
  title        = {An alternative view: When does SGD escape local minima?},
  author       = {Kleinberg, Bobby and Li, Yuanzhi and Yuan, Yang},
  booktitle    = {International conference on machine learning},
  pages        = {2698--2707},
  year         = {2018},
  organization = {PMLR}
}

@article{li2023convergence,
  title   = {Convergence of adam under relaxed assumptions},
  author  = {Li, Haochuan and Rakhlin, Alexander and Jadbabaie, Ali},
  journal = {Advances in Neural Information Processing Systems},
  volume  = {36},
  pages   = {52166--52196},
  year    = {2023}
}

@article{li2023convex,
  title   = {Convex and non-convex optimization under generalized smoothness},
  author  = {Li, Haochuan and Qian, Jian and Tian, Yi and Rakhlin, Alexander and Jadbabaie, Ali},
  journal = {Advances in Neural Information Processing Systems},
  volume  = {36},
  pages   = {40238--40271},
  year    = {2023}
}

@article{kovalev2025non,
  title   = {Non-Euclidean SGD for Structured Optimization: Unified Analysis and Improved Rates},
  author  = {Kovalev, Dmitry and Borodich, Ekaterina},
  journal = {arXiv preprint arXiv:2511.11466},
  year    = {2025}
}

@article{kovalev2025sgd,
  title   = {Sgd with adaptive preconditioning: Unified analysis and momentum acceleration},
  author  = {Kovalev, Dmitry},
  journal = {arXiv preprint arXiv:2506.23803},
  year    = {2025}
}

@inproceedings{karimireddy2019error,
  title        = {Error feedback fixes signsgd and other gradient compression schemes},
  author       = {Karimireddy, Sai Praneeth and Rebjock, Quentin and Stich, Sebastian and Jaggi, Martin},
  booktitle    = {International conference on machine learning},
  pages        = {3252--3261},
  year         = {2019},
  organization = {PMLR}
}

@article{crawshaw2022robustness,
  title   = {Robustness to unbounded smoothness of generalized signsgd},
  author  = {Crawshaw, Michael and Liu, Mingrui and Orabona, Francesco and Zhang, Wei and Zhuang, Zhenxun},
  journal = {Advances in neural information processing systems},
  volume  = {35},
  pages   = {9955--9968},
  year    = {2022}
}

@article{lobanov2026avoiding,
  title   = {Avoiding Bias in Clipped SGD for Overparameterized Models under Generalized Smoothness},
  author  = {Lobanov, Aleksandr and Koloskova, Anastasia},
  journal = {arXiv preprint arXiv:2605.14800},
  year    = {2026}
}

@article{orabona2023normalized,
  title   = {Normalized gradients for all},
  author  = {Orabona, Francesco},
  journal = {arXiv preprint arXiv:2308.05621},
  year    = {2023}
}

@article{nesterov2015universal,
  title     = {Universal gradient methods for convex optimization problems},
  author    = {Nesterov, Yu},
  journal   = {Mathematical Programming},
  volume    = {152},
  number    = {1},
  pages     = {381--404},
  year      = {2015},
  publisher = {Springer}
}

@article{rodomanov2024universal,
  title   = {Universal gradient methods for stochastic convex optimization},
  author  = {Rodomanov, Anton and Kavis, Ali and Wu, Yongtao and Antonakopoulos, Kimon and Cevher, Volkan},
  journal = {arXiv preprint arXiv:2402.03210},
  year    = {2024}
}

@article{rodomanov2024universality,
  title   = {Universality of adagrad stepsizes for stochastic optimization: Inexact oracle, acceleration and variance reduction},
  author  = {Rodomanov, Anton and Jiang, Xiaowen and Stich, Sebastian},
  journal = {Advances in Neural Information Processing Systems},
  volume  = {37},
  pages   = {26770--26813},
  year    = {2024}
}

\newpage
\appendix

\section{Lower Bounds}\label{sec:lower}

The complexity results for SGD and AdaGrad-Norm established in \Cref{sec:stoch} for stochastic convex optimization under \Cref{ass:G01} are quite pessimistic. This naturally raises the question of whether the exponential factors in the complexities obtained in \Cref{thm:SGD,thm:AdaGrad} are unavoidable or merely artifacts of a loose convergence analysis. In \Cref{thm:lower_SGD,thm:lower_AdaGrad}, we provide lower bounds showing that these factors are indeed unavoidable. Therefore, in view of the improved complexity bound in \Cref{cor:exp2}, we conclude that scalar-stepsize AdamW with clipped updates provably outperforms SGD and AdaGrad in this setting.

\begin{theorem}<thm:lower_SGD>
  Let the constants $\cG_0,\cG_1,\cR,\epsilon > 0$ satisfy the following conditions:
  \begin{equation}\label{eq:lower_SGD_params}
    \epsilon \leq \brs{\cG_0/\cG_1}\exp\brr{\tfrac{1}{8}\cR\cG_1},\qquad \cR\cG_1 \geq 8.
  \end{equation}
  Let $\eta_k = \eta>0$ be an arbitrary stepsize, and let $\ox_K = \frac{1}{K+1}\sum_{k=0}^K x_k$, where $x_0,\dots,x_k$ are defined by \cref{eq:SGD}.
  There exists a convex function $f(x)\colon \Rd \to \R$ and corresponding stochastic gradient oracle $\nabla_\xi f(x) \colon \Rd \to \Rd$ satisfying \Cref{ass:G01}, such that at least the following number of iterations is required to reach the precision $\E{\min\brf{f(x_K), f(\ox_K)} - f^*} \leq \epsilon$:
  \begin{equation}
    K = \Omega\brs*{\exp\brr{\Omega\brs{\cR\cG_1}} \cdot \brs{\cR\cG_0/\epsilon}}.
  \end{equation}
\end{theorem}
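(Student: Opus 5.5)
The idea is to build, for the given constant stepsize $\eta$, a hard instance by gluing two standard obstructions. One penalizes large stepsizes through exponentially large gradients far from the optimum. The other penalizes small stepsizes through slow travel over distance $\cR$ under bounded noise. The construction depends on $\eta$, which is allowed since the quantifier order is "for every $\eta$ there exists $f$". I would work in one or two dimensions.

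\textbf{Case $\eta$ large.} Take the one-dimensional convex function $f(x) = (\cG_0/\cG_1)\brs{\exp(\cG_1\abs{x - x^*}) - 1}$ with $\abs{x^*} = \cR$, or a two-dimensional variant. It satisfies \cref{eq:M01_grad} with $\cM_0=\cG_0$, $\cM_1=\cG_1$. Use the deterministic oracle, with $v = \nabla f - \cG_0\,\mathrm{sign}$ and $\norm{v}\leq \cG_1\brs{f-f^*}$, so \Cref{ass:G01} holds. At $x_0=0$ the gradient has size about $\cG_0 e^{\cR\cG_1}$. If $\eta\,\cG_0 e^{\cR\cG_1/2}\gtrsim \cR$, the first steps overshoot and are clipped by the projection onto $Q_\cR$. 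I would rather take a slightly different instance: put $x^*$ near the center and add a steep exponential wall so that the iterate oscillates between far points of $Q_\cR$. Then $f(x_k)$ and $f(\ox_K)$ stay at least $(\cG_0/\cG_1)e^{\cR\cG_1/8}\geq\epsilon$, where the last inequality uses \cref{eq:lower_SGD_params}. The constants $1/8$ come from choosing where the wall sits, namely at distance $\cR/4$ or so.

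\textbf{Case $\eta$ small.} Use the linear-near-optimum instance $f(x) = \cG_0\abs{x - x^*}$ with $x^* = \cR$, starting from $0$, plus stochastic noise of variance $\cG_0^2$. Alternatively, use a deterministic $\cG_0$-Lipschitz function together with the exponential part placed so that it is never reached. Each step moves at most $\eta\cG_0$ in expectation. To get $f\leq\epsilon$ requires moving distance about $\cR$, or at least $\cR-\epsilon/\cG_0$, so $K\gtrsim \cR/(\eta\cG_0)$. This needs care because $\ox_K$ averages, but the averaged iterate is even farther from $x^*$, and convexity plus monotone progress give the same bound.

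\textbf{Combining.} Find the threshold $\eta^\dagger \approx \cR e^{-\Omega(\cR\cG_1)}/\cG_0 \cdot(\cG_0/(\cG_1\epsilon))$ or similar. For $\eta\geq\eta^\dagger$ the first instance prevents convergence entirely. For $\eta<\eta^\dagger$ the second yields $K\gtrsim \cR/(\eta\cG_0)\geq e^{\Omega(\cR\cG_1)}\cR\cG_0/\epsilon$. This is slightly off, so I will instead make the second instance $\epsilon$-dependent. Take $f(x)=\epsilon\cdot$(scaled) so that $\epsilon$-accuracy requires reaching a region of width $\epsilon/\cG_0$. Then the crux is balancing: stepsizes must be below roughly $\epsilon e^{-\Omega(\cR\cG_1)}/\cG_0^2$ to avoid blowup from gradients of size $\cG_0e^{\Omega(\cR\cG_1)}$ near the boundary. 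Such stepsizes force $K\gtrsim \cR/(\eta\cG_0)$. The blowup argument is the main obstacle. With projection, a large step does not diverge, so I must show that the iterate keeps bouncing, i.e.\ that the function value at the reflected point is again $\geq\epsilon$. I would first try a symmetric V-shaped exponential with two steep walls and a flat valley of width $\epsilon/\cG_0$. Any step larger than about $\epsilon/\cG_0^2\cdot e^{-\Omega(\cR\cG_1)}$ taken from near a wall overshoots the valley, giving a periodic orbit. Handling noise to make the orbit robust in expectation is the delicate part.
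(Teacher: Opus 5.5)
Your two-case split on $\eta$ and the bouncing instance for large $\eta$ match the paper's structure. However, the small-stepsize case has a genuine gap, which you partly notice, and your proposed repair does not close it. With slope $\cG_0$ along the path from $x_0=0$, you only get $K\gtrsim\cR/(\eta\cG_0)$.

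The missing idea is that the $\epsilon$-dependence must sit in the \emph{slope} of the hard instance, not in the stepsize threshold. The paper keeps the $\epsilon$-independent threshold $\eta\le\tfrac{2\cR}{\cG_0}e^{-\cR\cG_1/4}$. For $|x-\tfrac12\cR|>r$ it uses $f(x)=\tfrac{8\epsilon}{\cR}\bigl(|x-\tfrac12\cR|-r\bigr)+\tfrac{\cG_0}{\cG_1}(e^{\cG_1 r}-1)$, glued to the core $\tfrac{\cG_0}{\cG_1}(e^{\cG_1|x-\frac12\cR|}-1)$, with $r=\max\{0,\tfrac{1}{\cG_1}\ln\tfrac{8\epsilon}{\cR\cG_0}\}$. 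The core is needed only when $8\epsilon/\cR>\cG_0$, to keep \Cref{ass:G01} valid near the minimizer, and \cref{eq:lower_SGD_params} gives $r\le\cR/8$.

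This instance is only $O(\epsilon)$-suboptimal at $x_0$. Yet $f\ge 2\epsilon$ on $x\le\cR/8$, while each deterministic step moves at most $8\eta\epsilon/\cR$. Hence $K\ge\cR^2/(64\epsilon\eta)\ge e^{\cR\cG_1/4}\,\cR\cG_0/(128\epsilon)$. The factor $\cR\cG_0/\epsilon$ thus comes from the small slope, not from the threshold.

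Your alternative keeps slope $\cG_0$ and lowers the threshold to $\eta^\dagger\approx\epsilon e^{-\Omega(\cR\cG_1)}/\cG_0^2$. That requires the large-$\eta$ instance to defeat every $\eta>\eta^\dagger$, and overshooting cannot do this. An oscillation whose turning points have error $\ge\epsilon$ persists only if $\eta|\nabla f|$ there is at least comparable to its amplitude. Since $|\nabla f|\le\cG_0+\cG_1(f-f^*)$ and, by \Cref{lem:M01}, $|\nabla f|\le\cG_0e^{2\cR\cG_1}$ on $Q_\cR$, this requires roughly $\eta\gtrsim\min\{\epsilon/\cG_0^2,\ \cR e^{-O(\cR\cG_1)}/\cG_0\}$. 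For small $\epsilon$ this is much larger than $\eta^\dagger$. For stepsizes in between, the iterates of your valley instance do not keep bouncing. They settle in the valley with error $O(\eta\cG_0^2)$, while $\cR/(\eta\cG_0)$ falls short of the target bound.

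Two smaller points. First, in the bouncing case $\ox_K$ sits near $0$: for the orbit $\cR,-\cR,\cR,\dots$ one has $\ox_K\in\{0,\cR/(K+1)\}$. So the minimizer must be $\Omega(\cR)$ away from the origin, not ``near the center''. The paper puts it at $\tfrac34\cR$ and uses slope $-\cG_0e^{\cR\cG_1/4}$ on $x<\tfrac12\cR$, so that the gradients at $0$ and $-\cR$ coincide. Second, no noise is needed. A deterministic oracle satisfies \Cref{ass:G01} in both cases, so the ``delicate'' robustness issue you raise disappears.
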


\begin{theorem}<thm:lower_AdaGrad>
  Let the constants $\cG_0,\cG_1,\cR,\epsilon > 0$ satisfy the following conditions:
  \begin{equation}\label{eq:lower_AdaGrad_params}
    \epsilon \leq \brs{\cG_0/\cG_1}\exp\brr{\tfrac{1}{64}\cR\cG_1},\qquad \cR\cG_1 \geq 32.
  \end{equation}
  Let $\eta>0$ be an arbitrary parameter, and let $\ox_K = \frac{1}{K+1}\sum_{k=0}^K x_k$, where $x_0,\dots,x_k$ are defined by \cref{eq:SGD} with the AdaGrad-Norm stepsizes:
  \begin{equation}
    \eta_k = \eta / \mysqrt{\tsum_{i=0}^{k} \sqn{\nabla_{\xi_i} f(x_i)}}
  \end{equation}
  There exists a convex function $f(x)\colon \Rd \to \R$ and corresponding stochastic gradient oracle $\nabla_\xi f(x) \colon \Rd \to \Rd$ satisfying \Cref{ass:G01}, such that at least the following number of iterations is required to reach the precision $\E{\min\brf{f(x_K), f(\ox_K)} - f^*} \leq \epsilon$:
  \begin{equation}
    K = \Omega\brs*{\exp\brr{\Omega\brs{\cR\cG_1}}}.
  \end{equation}
\end{theorem}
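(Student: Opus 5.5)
We prove the lower bound for AdaGrad-Norm in \Cref{thm:lower_AdaGrad} with a one-dimensional construction that uses one mechanism regardless of $\eta$. The objective grows exponentially away from the optimum, so the iterates either explode early or move too slowly. Concretely, take $d=1$, $x^*$ with $\abs{x^*}=\cR/2$ (say $x^* = -\cR/2$, with $x_0=0$), and
\begin{equation*}
  f(x) = \brs{\cG_0/\cG_1}\brs{\exp\brr{\cG_1\abs{x - x^*}}-1}.
\end{equation*}
This $f$ is convex, $f^*=0$, and $\abs{f'(x)} = \cG_0 + \cG_1 f(x)$. Hence \Cref{ass:G01} holds with the deterministic oracle $u=\pm\cG_0$, $v=\pm\cG_1 f(x)$. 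Noise is not needed for this bound, since the claimed complexity $\exp\brr{\Omega\brs{\cR\cG_1}}$ carries no $\epsilon$-dependence beyond the condition $\epsilon \le \brs{\cG_0/\cG_1}\exp\brr{\frac{1}{64}\cR\cG_1}$. At $x_0=0$ the gap is $f(0)\approx \brs{\cG_0/\cG_1}e^{\cR\cG_1/2}$ and the gradient magnitude is $g_0 := \cG_0 e^{\cR\cG_1/2}$. To reach precision $\epsilon$, some iterate must enter the region $\abs{x-x^*}\le r_\epsilon$ with $r_\epsilon = \cG_1^{-1}\ln(1+\epsilon\cG_1/\cG_0)\le \cR/64+\cO(1/\cG_1)$. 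For the averaged iterate $\ox_K$, convexity and monotonicity in one dimension show that $\ox_K$ lies between $0$ and the extreme iterates, so $\ox_K$ close to $x^*$ forces a substantial fraction of the iterates to have travelled at least about $\cR/4$.

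We split on $\eta$. \emph{Large $\eta$} ($\eta \gtrsim \cR$ up to constants): the first step has $\eta_0 = \eta/g_0$, so $x_1 = \proj[Q_\cR]{\eta\,\mathrm{sign}}$, a jump of length $\eta$ toward $x^*$. This overshoots past $x^*$ to a point at distance about $\min\{\eta,\cR\}-\cR/2$ on the other side, or lands on the boundary $-\cR$. To make this case bite, we place the minimizer so that overshooting is bad: pick $x^*=-\cR/4$, so any step of length at least $\cR/2$ lands at distance $\ge\cR/4$ on the far side. The landing gradient is then of order $g_0$ again, and the accumulated denominator only grows. We then argue that the method oscillates: each step moves by at most $\eta g_k/\sqrt{\sum_{i\le k} g_i^2}$. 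Once gradients at distance $\cR/8$ from $x^*$ have been accumulated, later steps near $x^*$, where gradients are of order $\cG_0 e^{\cG_1 r}$, are exponentially small relative to the accumulated $\sqrt{\sum g_i^2}\ge \cG_0 e^{\cR\cG_1/8}$. The steps near the target therefore have length $\lesssim \eta\, e^{-\Omega(\cR\cG_1)}$, and covering the remaining $\Omega(\cR)$ distance needs $e^{\Omega(\cR\cG_1)}$ iterations.

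\emph{Small $\eta$} ($\eta \lesssim \cR$): every step has length at most $\eta g_k/\sqrt{\sum_{i\le k} g_i^2}\le \eta$. Since $g_0=\cG_0 e^{\cR\cG_1/4}$ sits in the denominator forever, once the iterate has moved to distance $\le \cR/8$ from $x^*$ its gradients are at most $\cG_0e^{\cR\cG_1/8}$. Each such step is then at most $\eta e^{-\cR\cG_1/8}$. Passing from distance $\cR/8$ down to distance $r_\epsilon\le\cR/64+\cO(1/\cG_1)$ requires travelling $\Omega(\cR)$, hence $K\ge \Omega(\cR/\eta)\,e^{\cR\cG_1/8} = e^{\Omega(\cR\cG_1)}$. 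This argument covers both cases uniformly: the key inequality is that every step taken inside the ball of radius $\cR/8$ around $x^*$ has length at most $\eta\cdot e^{-\Omega(\cR\cG_1)}$, regardless of history, because $g_0$ is in the denominator. The only remaining issue is large $\eta$, where a single step could jump from far away directly into the target region. This is the main obstacle.

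To handle it, we would exploit that a jump landing within $r_\epsilon$ of $x^*$ must start from some point $y$ with step $\eta \abs{f'(y)}/\sqrt{S}$ matching $\abs{y-x^*}$ to within $r_\epsilon$. We then make the landing non-generic, either by choosing $x^*$ adversarially after fixing $\eta$ (allowed, since $f$ may depend on $\eta$), or by adding a coordinate or randomization so that the exact landing has small probability. The simplest version takes $x^*$ as a function of $\eta$ so that the deterministic trajectory $\{x_k\}$ avoids the $r_\epsilon$-neighbourhood for $e^{\Omega(\cR\cG_1)}$ steps, using a pigeonhole argument over the candidate positions $x^*\in[-\cR/2,-\cR/4]$. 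Finally, the conditions $\cR\cG_1\ge 32$ and the bound on $\epsilon$ are used to make $r_\epsilon\le\cR/32$ and the constants consistent, and the statement for $\min\brf{f(x_K),f(\ox_K)}$ follows from the one-dimensional convex-hull remark above.
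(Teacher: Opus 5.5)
There is a genuine gap. Your single two-sided exponential $f(x)=(\cG_0/\cG_1)[\exp(\cG_1|x-x^*|)-1]$ cannot handle $\eta=\Theta(\cR)$, and the deferred ``pigeonhole over $x^*$'' does not repair it.

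The first AdaGrad-Norm step always has length exactly $\eta$ before projection, because $\eta_0=\eta/|\nabla f(x_0)|$, however large $|\nabla f(0)|$ is. With your choice $x^*=-\cR/4$:
\begin{itemize}
\item $\eta=\cR/4$ gives $x_1=x^*$;
\item $\eta=\cR/2$ gives $x_1=-\cR/2$ and $\ox_1=(x_0+x_1)/2=x^*$.
\end{itemize}
So the method is exactly optimal after one iteration, in each of your two regimes.

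Your small-$\eta$ argument is valid only when $\eta$ is below a small constant times $\cR$ (roughly $\eta<\cR/8-r_\epsilon$). Otherwise a single step crosses from outside the $\cR/8$-ball straight into the target, so the claim that it ``covers both cases uniformly'' is false. Your convex-hull remark for $\ox_K$ also fails once iterates lie on both sides of $x^*$, which is exactly what happens after an overshoot.

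Choosing $x^*$ as a function of $\eta$ controls only the first step, whose landing point $-\eta$ does not depend on $x^*$. After an overshoot, the landing gradient $\cG_0\exp(\cG_1|x_1-x^*|)$, and hence every later step length, depends on $x^*$ in an exponentially sensitive way. There is therefore no fixed set of positions to run a pigeonhole argument over. In the bouncing regime the gradients at $\pm\cR$ also differ, so you cannot predict where the iterates land.

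The missing idea, which the paper uses, is to drop the single function and build, for each range of $\eta$, an asymmetric piecewise function whose geometry makes the trajectory predictable.
\begin{itemize}
\item For $\eta\ge\cR\exp(\tfrac{1}{32}\cR\cG_1)$: the subgradients at $0$ and $\pm\cR$ all have magnitude $\cG_0\exp(\tfrac14\cR\cG_1)$. Each step then has length exactly $\eta/\sqrt{K}$, the iterates bounce between $\pm\cR$ for $(\eta/(2\cR))^2$ iterations, and $\ox_K\in\{0,\cR/(K+1)\}$ stays where the gap is large.
\item For $\tfrac14\cR\le\eta<\cR\exp(\tfrac{1}{32}\cR\cG_1)$ and for $\eta<\tfrac14\cR$: the key device is a linear piece of tiny slope $32\epsilon/\cR$ on the region where the iterate lands. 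In the middle regime it lies beyond a minimizer at $\cR/16$, reached by the forced first step of length $\eta\ge\cR/4$. In the small regime it lies between a steep region containing $x_0=0$ and a minimizer at $\cR$.
\end{itemize}
On that linear piece every step has length at most $\eta\cdot(32\epsilon/\cR)/|\nabla f(0)|$. This is exponentially small because the huge gradient at $x_0$ stays in the AdaGrad denominator, regardless of exactly where the iterate landed. Crossing the remaining $\Omega(\cR)$ distance then costs $\exp(\Omega(\cR\cG_1))$ iterations.

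The placement also keeps $\ox_K$ in the high-gap region. For example, in the middle regime $\ox_K\ge\cR/8$ because $x_1\ge\cR/4$ compensates for $x_0=0$. This decoupling of the landing gradient from the distance to $x^*$ is precisely what your two-sided exponential lacks.
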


\newpage

\section{Universality: Generalized H\"older Smooth Stochastic Convex Problems}\label{sec:uni}

\subsection{Generalized H\"older Smoothness}\label{sec:SL01}

In this section, we assume that the objective function $f(x)$ is differentiable and convex. Next, we introduce \Cref{ass:SL01}, which we call stochastic generalized H\"older smoothness.

\begin{assumption}<ass:SL01>
  There exists a stochastic estimator $\nabla_\xi f(x)$ of the gradient $\nabla f(x)$, where $\xi \sim \cD$ is a random variable sampled from the distribution $\cD$. The gradient estimator $\nabla_\xi f(x)$ is unbiased, i.e., $\E[\xi \sim \cD]{\nabla_\xi f(x)} = \nabla f(x)$, and allows the decomposition $\nabla_\xi f(x) = u_\xi(x) + v_\xi(x)$, satisfying
  \begin{equation}
    \E[\xi \sim \cD]{\sqn{u_\xi(x)}}\leq \sigma^2 + \cL_0^{\smash[t]{\frac{2}{1+\nu}}}\brs{f(x) - f^*}^{\smash[t]{\frac{2\nu}{1+\nu}}}
    \quad\text{and}\quad
    \norm{v_\xi(x)} \las \cL_1\brs{f(x) - f^*},
  \end{equation}
  where $\nu \in [0,1]$ and $\sigma,\cL_0,\cL_1 \geq 0$, and where $\sigma = 0$ whenever $\nu=0$.
\end{assumption}

We motivate \Cref{ass:SL01} by describing the following special cases.
\begin{enumerate}[label=\bf(\roman*)]
  \item In the case $(\nu,\sigma,\cL_0,\cL_1) = (0,0,\cG_0,\cG_1)$, \Cref{ass:SL01} matches \Cref{ass:G01}, which is a stochastic variant of the generalized Lipschitzness discussed in \Cref{sec:M01}.
  \item\label[case]{case:L01} In the noiseless case with $(\nu,\sigma) = (1,0)$, \Cref{ass:SL01} recovers the generalization of the $(\cL_0,\cL_1)$-smoothness up to universal constants; see \cref{eq:gorbunov} in \Cref{sec:M01}.
  \item\label[case]{case:Holder} In the noiseless case with $(\sigma,\cL_0,\cL_1) = (0,\cL,0)$, \Cref{ass:SL01} is implied by the standard $(\nu,\cL)$-H\"older smoothness assumption \citep{nesterov2015universal} up to universal constants; see Corollary~1 of \citet{orabona2023normalized}.
  \item Assuming the stochastic setting with globally bounded variance of the stochastic gradient and allowing $\sigma \neq 0$ in \cref{case:L01,case:Holder} recovers the stochastic variants of the corresponding assumptions.
\end{enumerate}
By analyzing these cases, one can conclude that \Cref{ass:SL01} interpolates between the stochastic variants of the generalized Lipschitzness and generalized smoothness, just like the standard H\"older smoothness interpolates between uniform Lipschitzness and uniform smoothness.

We also establish an important property of the generalized H\"older smooth functions in the following \Cref{lem:SL01}. It can be seen as the extension of \Cref{lem:M01} to the stochastic generalized H\"older smoothness setting.

\begin{lemma}<lem:SL01>
  Let  \Cref{ass:SL01} hold and let the function $h(x)\colon \Rd \to \R$ be defined as follows:
  \begin{equation}\label{eq:h}
    h(x) = \brs{\beta + f(x) - f^*}^{\smash[t]{\frac{1}{1+\nu}}}
    ,\quad\text{where}\;\; \beta = \min\brf*{\brs{\sigma^{1+\nu}/\cL_0}^{\smash{\frac{1}{\nu}}}, \sigma/\cL_1}
  \end{equation}
  Then, the following inequality holds for all $x,x' \in \Rd$:
  \begin{equation}
    \abs{h(x') - h(x)}^{1+\nu} \leq \cO\brs*{\brr*{ \cL_0 + \brs{\cL_1 h(x)}^{1+\nu}}\exp\brr*{\cL_1\norm{x'-x}}\norm{x'-x}^{1+\nu}}.
  \end{equation}
\end{lemma}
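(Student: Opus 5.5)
The plan is to treat $h$ as a function whose gradient is bounded by an affine function of $h$ itself, exactly as $f$ is in \cref{eq:M01_grad}. Then I would repeat the Grönwall-type argument behind \Cref{lem:M01} along the segment from $x$ to $x'$, and finally raise the result to the power $1+\nu$. Throughout, write $\delta = f(x)-f^*$.

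\textbf{Step 1 (gradient of $f$).} By unbiasedness and Jensen's inequality, $\norm{\nabla f(x)} \leq \mysqrt{\E{\sqn{u_\xi(x)}}} + \E{\norm{v_\xi(x)}}$. \Cref{ass:SL01} then gives
\[
\norm{\nabla f(x)} \leq \sigma + \cL_0^{\frac{1}{1+\nu}}\delta^{\frac{\nu}{1+\nu}} + \cL_1\delta .
\]

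\textbf{Step 2 (gradient of $h$).} We have $\nabla h = \tfrac{1}{1+\nu}(\beta+\delta)^{-\frac{\nu}{1+\nu}}\nabla f$. I would bound the three terms separately.
\begin{itemize}
\item The middle term satisfies $\delta^{\frac{\nu}{1+\nu}}(\beta+\delta)^{-\frac{\nu}{1+\nu}} \leq 1$.
\item The last term satisfies $\delta(\beta+\delta)^{-\frac{\nu}{1+\nu}} \leq h$.
\item The noise term satisfies $\sigma(\beta+\delta)^{-\frac{\nu}{1+\nu}} \leq \sigma\beta^{-\frac{\nu}{1+\nu}}$. This is where the choice of $\beta$ in \cref{eq:h} enters, via two cases.
  \begin{itemize}
  \item If $\beta = (\sigma^{1+\nu}/\cL_0)^{1/\nu}$, then $\sigma\beta^{-\frac{\nu}{1+\nu}} = \cL_0^{\frac{1}{1+\nu}}$.
  \item If $\beta = \sigma/\cL_1$, then $\sigma\beta^{-\frac{\nu}{1+\nu}} = \cL_1\beta^{\frac{1}{1+\nu}} \leq \cL_1 h$.
  \end{itemize}
\end{itemize}
The degenerate cases $\nu=0$ (where $\sigma=0$) and $\sigma=0$ are covered by the paper's conventions. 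Altogether, with $A = \cO(\cL_0^{\frac{1}{1+\nu}})$,
\[
\norm{\nabla h(x)} \leq A + \tfrac{c_\nu\cL_1}{1+\nu}\, h(x).
\]
Here $c_\nu$ must be kept equal to $1$ on the $\cL_1\delta$ contribution. The contribution coming from $\sigma$ can be paired with the $\tfrac{1}{1+\nu}$ prefactor as well, or absorbed by a slightly more careful split $\sigma \leq$ (either of the two cases).

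\textbf{Step 3 (Grönwall along the segment).} Set $r=\norm{x'-x}$ and $\varphi(t) = h(x+t(x'-x))$. Then $\abs{\varphi'} \leq r\brr{A + \tfrac{\cL_1}{1+\nu}\varphi}$, which gives
\[
\abs{h(x')-h(x)} \leq \brr*{\tfrac{(1+\nu)A}{\cL_1} + h(x)}\brs*{\exp\brr*{\tfrac{\cL_1 r}{1+\nu}} - 1} \leq \brr{A + \cL_1 h(x)}\exp\brr*{\tfrac{\cL_1 r}{1+\nu}}\, r .
\]
This uses $e^s-1 \leq s e^s$. Raising to the power $1+\nu$ and using $(a+b)^{1+\nu} \leq 2^\nu(a^{1+\nu}+b^{1+\nu})$ yields
\[
\cO\brs*{\brr{\cL_0 + (\cL_1 h)^{1+\nu}}\exp(\cL_1 r)\, r^{1+\nu}},
\]
which is the claimed bound.

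\textbf{Main obstacle.} The delicate point is keeping the exponent exactly $\cL_1 r$ after raising to the power $1+\nu$. A crude bound $\norm{\nabla h} \leq \cO(A + \cL_1 h)$ would yield $\exp((1+\nu)\cL_1 r)$, which cannot be absorbed into $\cO$. So in Step 2 I must track that the linear-in-$h$ coefficient is at most $\cL_1/(1+\nu)$, plus at most an additive constant. If the $\sigma$-term in the case $\beta=\sigma/\cL_1$ spoils this coefficient, I would redefine the split: bound $\sigma\beta^{-\frac{\nu}{1+\nu}} \leq \cL_1\beta^{\frac{1}{1+\nu}}$ by the constant $\cL_1 h(x)$ evaluated at the fixed base point, using $\beta \leq h(\cdot)^{1+\nu}$ everywhere. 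Alternatively, I would note that $\beta \leq \min_y h(y)^{1+\nu}$ and treat this term as part of the additive constant, scaling it against $\cL_1 h(x)$ only at the end.
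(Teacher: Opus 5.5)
Your proposal is correct and follows the paper's route: bound $\norm{\nabla h}$ by an additive constant plus $\tfrac{\cL_1}{1+\nu}h$, run the Gr\"onwall/Dini-derivative argument of \Cref{lem:M01} along the segment, and raise the result to the power $1+\nu$ so that the exponent becomes exactly $\cL_1\norm{x'-x}$. The two proofs differ only in how they treat the $\sigma$-term when $\beta=\sigma/\cL_1$. After you drop $-\beta h^{-\nu}$, Step~2 leaves the constant $\tfrac{\cL_1}{1+\nu}\beta^{1/(1+\nu)}$, which is not $\cO(\cL_0^{1/(1+\nu)})$ in general (for example, $\cL_0=0<\sigma$). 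So your ``Main obstacle'' fallback is what is actually needed, and it works: keep this term as an additive constant in the Gr\"onwall step and bound it by $\tfrac{\cL_1}{1+\nu}h(x)$ at the end. The paper instead writes $\cL_1\delta=\cL_1(h^{1+\nu}-\beta)$, so $-\cL_1\beta$ cancels $\sigma=\cL_1\beta$ exactly and yields $\norm{\nabla h}\le\tfrac{2}{1+\nu}\cL_0^{1/(1+\nu)}+\tfrac{\cL_1}{1+\nu}h$ pointwise.
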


\subsection{Convergence Analysis}

In this section, we analyze the convergence of the iterates generated by \cref{eq:wd} under \Cref{ass:regret,ass:SL01}. Our goal is to extend the two-stage convergence guarantee of \Cref{thm:exp2} to the stochastic generalized H\"older smooth setting. We first prove the corresponding generalized regret bound in \Cref{lem:regret_uni}, which extends \cref{eq:regret_bound}, and then derive the main convergence result and explicit iteration complexity bound in \Cref{thm:uni} and \Cref{cor:uni}.

\begin{lemma}<lem:regret_uni>
  Under the conditions of \Cref{thm:uni}, the following inequality holds for all $K \geq T$:
  \begin{equation}
    \begin{aligned}
      \E{\reg_K}
      &\leq
      \tfrac{1}{2}\tsum_{k=1}^K \alpha_k\pi_k\E*{f(x_k) - f^*}
      +\cO\brs*{
        \cR\sigma\mysqrt{\tsum_{k=1}^K \alpha_k^2\pi_k^2}
        +
        T\cR\sigma\tsum_{k=1}^K\alpha_k^2\pi_k
      }
      \\&
      +\cO\brs*{
        \cR^{1+\nu}\cL_0\brs*{\tsum_{k=1}^K [\alpha_k\pi_k]^{\frac{2}{1-\nu}}}^{\frac{1-\nu}{2}}
        +
        \brs{T\cR}^{1+\nu}\cL_0\tsum_{k=1}^K \alpha_k^{2+\nu}\pi_k
      }.
    \end{aligned}
  \end{equation}
\end{lemma}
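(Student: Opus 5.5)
We start from \Cref{ass:regret}, which gives $\E{\reg_K}\le \cC\cR\,\E{(\sum_{k=1}^K\sqn{g_k})^{1/2}}$, where $g_k=\alpha_k\pi_k\nabla_{\xi_k}f(y_k)$ and $y_k=\alpha_kz_k+(1-\alpha_k)x_{k-1}$ is the point at which the stochastic gradient is evaluated. By the chain rule for \cref{eq:fk} we have $g_k=\alpha_k\pi_k(u_k+v_k)$. By the triangle inequality, the square root then splits into a $u$-part and a $v$-part.

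For the $u$-part we apply Jensen's inequality and the bound $\E{\sqn{u}}\le\sigma^2+\cL_0^{2/(1+\nu)}\Delta^{2\nu/(1+\nu)}$, where $\Delta=f(y_k)-f^*$. The $\sigma$ contribution gives $\cR\sigma(\sum\alpha_k^2\pi_k^2)^{1/2}$ directly. For the $\cL_0$ contribution we use Hölder's inequality with exponents $\tfrac{1+\nu}{1-\nu}$ and $\tfrac{1+\nu}{2\nu}$ on $\sum(\alpha_k\pi_k)^2\Delta_k^{2\nu/(1+\nu)}$. Together with Young's inequality $\cR\cL_0^{1/(1+\nu)}A^{\nu/(1+\nu)}B\le \tfrac14 A+\cO(\cR^{1+\nu}\cL_0 B^{1+\nu})$, this converts the $\cL_0$ term into $\tfrac14\sum\alpha_k\pi_k\Delta_k$ plus the term $\cR^{1+\nu}\cL_0(\sum(\alpha_k\pi_k)^{2/(1-\nu)})^{(1-\nu)/2}$. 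To get this, we split $(\alpha_k\pi_k)^2\Delta_k^{2\nu/(1+\nu)}=(\alpha_k\pi_k)^{2/(1+\nu)}\cdot(\alpha_k\pi_k\Delta_k)^{2\nu/(1+\nu)}$ before applying Hölder.

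The $v$-part is handled exactly as in the proof of \Cref{lem:regret_exp1}. We bound it by $\cR\cL_1(\sum\alpha_k^2\pi_k^2\Delta_k^2)^{1/2}$ and then use iterate stability. Since $\norm{y_k-x_{k'}}\le 2\cR\sum_{j}\alpha_j\le 2\cR T\alpha\le 2/\cL_1$ over windows of length $T$, the function values within a window are comparable. Summing over windows of length $T=\lceil\hC(\cR\cL_1)^2\rceil$ then turns $\sqrt{\sum(\alpha_k\pi_k\Delta_k)^2}$ into roughly $T^{-1/2}\sum\alpha_k\pi_k\Delta_k$, and the factor $\cR\cL_1/\sqrt{T}$ is small enough to absorb this into $\tfrac14\sum\alpha_k\pi_k\Delta_k$. \Cref{lem:regret_exp1} does this with \Cref{lem:M01}; here we instead use \Cref{lem:SL01} applied to $h$.

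The main obstacle is that \Cref{lem:SL01} makes $h$ stable only up to additive errors of order $\cL_0^{1/(1+\nu)}\norm{x'-x}$, where $\norm{x'-x}\lesssim T\cR\alpha_k$, and up to the shift $\beta$. Converting $h^{1+\nu}$ comparisons back to $\Delta$ therefore produces error terms $(T\cR\alpha_k)^{1+\nu}\cL_0$ and $\beta$-terms. Weighted by $\alpha_k\pi_k$ inside the window comparison, these become $(T\cR)^{1+\nu}\cL_0\sum\alpha_k^{2+\nu}\pi_k$ and, via $\beta\le\sigma/\cL_1$, $T\cR\sigma\sum\alpha_k^2\pi_k$, matching the claimed error terms. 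A second issue is that the statement is in terms of $f(x_k)$ rather than $f(y_k)$. We plan to relate the two using the same stability bound, since $y_k$ and $x_k$ differ only by $\cO(\alpha_k\cR)$ and $x_k=y_k$ in \cref{eq:wd}; this conversion incurs only the same error terms. The delicate part is checking the window bookkeeping for the two-stage schedule of \Cref{thm:uni}, which we will follow as in \Cref{lem:regret_exp2}.
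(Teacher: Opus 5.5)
Your plan is essentially the paper's proof. It has the same $u$/$v$ split after \Cref{ass:regret}, the same Jensen, H\"older (exponents $\tfrac{1+\nu}{1-\nu}$, $\tfrac{1+\nu}{2\nu}$) and Young steps for the $\cL_0$ part, and the same block-stability argument of \Cref{lem:regret_exp1,lem:regret_exp2} run on $h$ through \Cref{lem:SL01}, with $T\geq\hC\brs{\cR\cL_1}^2$ making the $\cO\brs{\cR\cL_1/\sqrt{T}}$ coefficient small enough to absorb.

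Two remarks. First, no $f(y_k)$-versus-$f(x_k)$ conversion is needed, since $y_k=x_k$ exactly by \cref{eq:wd}. Second, turning the cross term $\beta^{\nu/(1+\nu)}\abs{h(x_{k_s})-h(x_k)}$ into $\cO\brs{\sigma\norm{x_k-x_{k_s}}}$ plus absorbable terms requires the other branch of the minimum in \cref{eq:h}, namely $\cL_0\beta^{\nu}\leq\sigma^{1+\nu}$; the bound $\beta\leq\sigma/\cL_1$ alone does not suffice there.
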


\begin{theorem}<thm:uni>
  Let \Cref{ass:regret,ass:SL01} hold, and assume $\cR\cL_1 \geq 1$. Let $T = \ceil*{\hC\brs{\cR\cL_1}^2}$, where $\hC$ is a sufficiently large universal constant, and let $\pi_0 = 1$. Let $\alpha_k$ be defined as follows:
  \begin{equation}\label{eq:alpha_uni}
    \resizebox{0.93\textwidth}{!}{
      \hspace{-0.7em}
      $
      \displaystyle
      \frac{1}{\alpha_k} =
      \begin{cases}
        T\cR\cL_1 & k \leq S T \\
        \max\brf*{
          T\cR\cL_1,
          \brs*{{\cR\sigma}/{\epsilon}}^2,
          \brs*{{\cR^{1+\nu}\cL_0}/{\epsilon}}^{\smash[t]{\frac{2}{1+\nu}}},
          \brs{{T\cR\sigma}/{\epsilon}},
          \brs*{{[T\cR]^{1+\nu}\cL_0}/{\epsilon}}^{\smash[b]{\frac{1}{1+\nu}}}
        }  & k > S T
      \end{cases}.
      $
    }
  \end{equation}
  Let $\hF = \smash{\cL_0/\cL_1^{1+\nu} + \sigma/\cL_1}$, and let $K,S \in \N$ be defined as follows:
  \begin{equation}\label{eq:KS_uni}
    K = TS + \ceil*{\brs{2/\alpha_{TS+1}} \ln\brs*{e+ \brs{\hF/\epsilon}}}
    ,\quad
    S = \ceil*{\brs{2/(\alpha_1T)} \ln \brs*{e+ \brs{\cF/\hF}}}.
  \end{equation}
  Then, the inequality $\E*{f(x_K) - f^*}\leq \cO\brs{\epsilon}$ holds.
\end{theorem}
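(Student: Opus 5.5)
The argument combines \Cref{lem:wd} with the regret bound of \Cref{lem:regret_uni} (taken as given, as in \Cref{thm:exp2}) and then tracks a two-stage recursion. The first stage drives the gap down to $\cO\brs{\hF}$ at a linear rate. The second stage drives it down to $\cO\brs{\epsilon}$ with a small constant $\alpha$.

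\textbf{Master recursion.} Lemma~\ref{lem:wd} is valid for any $K$, and since $\pi_k(1-\alpha_k)=\pi_{k-1}$ with $\pi_0=1$, we have $\pi_k=\prod_{i\le k}(1-\alpha_i)^{-1}$. Write $\delta_k=\E{f(x_k)-f^*}$. Plugging \Cref{lem:regret_uni} into \Cref{lem:wd} gives, for $K\ge T$,
\begin{equation*}
\pi_K\delta_K\le \pi_0\delta_0+\tfrac12\tsum_{k=1}^K\alpha_k\pi_k\delta_k+E_K,
\end{equation*}
where $E_K$ collects the four $\cO$-terms. The half-sum on the right cannot simply be dropped. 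I would apply the inequality at every horizon $K'\le K$ and use a Gr\"onwall-type argument: define $D_K=\pi_0\delta_0+E_K+\frac12\sum_{k\le K}\alpha_k\pi_k\delta_k$, so that $\pi_K\delta_K\le D_K$, and then $D_K\le D_{K-1}(1+\tfrac12\alpha_K)+(E_K-E_{K-1})$. Since $\prod(1+\alpha_k/2)\le\pi_K^{1/2}$, this yields $\delta_K\lesssim \pi_K^{-1/2}\delta_0+\pi_K^{-1}\sum_k \pi_K^{1/2}\pi_k^{-1/2}(E_k-E_{k-1})$. Alternatively, and more simply, I would restart the analysis at the beginning of stage two, with $x_{TS}$ playing the role of $x_0$ and $\pi$ renormalized to $1$ there. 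For $k<T$, where \Cref{lem:regret_uni} does not directly apply, I would use the crude bound from \Cref{lem:regret}; it is harmless because $\alpha T\cR\cL_1\le1$.

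\textbf{Error terms under a constant $\alpha$.} On a block with constant $\alpha_k=\alpha$ we have $\pi_k=(1-\alpha)^{-k}$, so sums weighted by $\pi_k^p$ are dominated by their last terms: $\sum_k\alpha^2\pi_k^2\lesssim\alpha\pi_K^2$, $\sum_k(\alpha\pi_k)^{2/(1-\nu)}\lesssim \alpha^{-1}(\alpha\pi_K)^{2/(1-\nu)}$, and $\sum\alpha^{2+\nu}\pi_k\lesssim\alpha^{1+\nu}\pi_K$. Dividing by $\pi_K$ gives
\begin{equation*}
\delta_K\lesssim \pi_K^{-1/2}\delta_0+\cR\sigma\sqrt\alpha+T\cR\sigma\alpha+\cR^{1+\nu}\cL_0\alpha^{(1+\nu)/2}+(T\cR)^{1+\nu}\cL_0\alpha^{1+\nu}.
\end{equation*}

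\textbf{Stage one.} Take $1/\alpha=T\cR\cL_1$ and $T\approx(\cR\cL_1)^2$. Each error term then becomes $\cO\brs{\sigma/\cL_1+\cL_0/\cL_1^{1+\nu}}=\cO\brs{\hF}$. For example, $T\cR\sigma\alpha=\sigma/\cL_1$. For the $\sqrt\alpha$ terms, $\cR\sigma\sqrt\alpha=\sigma/(\cL_1\sqrt{T\cL_1\cR}\cdot\cL_1^{-1}\cR^{-1})\le\sigma/\cL_1$, and the $\cL_0$ terms are handled the same way using $\cR\cL_1\ge1$. After $TS$ steps, $\pi_{TS}\ge e^{\alpha TS}$, so the choice of $S$ in \cref{eq:KS_uni} gives $\pi_{TS}^{-1/2}\cF\le\hF$, using $\delta_0\le\cF$ because $x_0=0\in Q_\cR$. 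Hence $\delta_{TS}=\cO\brs{\hF}$.

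\textbf{Stage two and the main obstacle.} In stage two the same bound holds with the new $\alpha$, and each max-term in \cref{eq:alpha_uni} makes exactly one error term at most $\epsilon$. The number of steps $2/\alpha\cdot\ln(e+\hF/\epsilon)$ then reduces $\hF$ to $\epsilon$. The main obstacle is the restart step. The regret lemma applies only for $K\ge T$ counted within a block, and its stability argument needs the previous iterate history, so \Cref{lem:regret_uni} has to be invoked on the whole schedule as stated. I would therefore work with the global inequality and split the sums at $k=TS$, bounding the stage-one contributions after multiplication by $\pi_K^{-1}$, which carries the factor $\prod_{k>TS}(1-\alpha)$. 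The delicate part is confirming that the $\frac12$-sum absorbs correctly across the $\alpha$ switch; I would handle it with the Gr\"onwall form above, which gives the $\pi^{-1/2}$ decay, hence the factor $2$ in $K$ and $S$.
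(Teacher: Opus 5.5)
Your proposal is correct and follows the paper's proof. In the paper's notation, with $\Delta_k$ the expected gap and $\delta_k=e\cF+E_k$, \Cref{lem:wd} plus \Cref{lem:regret_uni} give $\pi_k\Delta_k\le\delta_k+\tfrac12\sum_{i\le k}\alpha_i\pi_i\Delta_i$, and your Gr\"onwall step is exactly \Cref{lem:tech} with $p=1/2$. The precise one-step factor there is $(1-\alpha_K/2)^{-1}$, not $1+\alpha_K/2$, but it is still at most $(1-\alpha_K)^{-1/2}$, so the $\pi_K^{1/2}$ growth is unchanged. As you ultimately decide, the paper does not restart; it splits the global sums at $k=ST$ to reach $\cF[1-\alpha_1]^{ST/2}[1-\alpha_{ST+1}]^{(K-ST)/2}+\hF[1-\alpha_{ST+1}]^{(K-ST)/2}+\epsilon$.

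The remaining differences are cosmetic. For $k<T$ the paper just uses $\pi_k\Delta_k\le\pi_k\cF\le e\cF$ and folds $e\cF$ into $\delta_k$, which is simpler than a crude regret bound. It also weights the cumulative $\delta_k$ by $\alpha_k$ instead of using increments $E_k-E_{k-1}$, which spares you an Abel summation for the square-root terms.
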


\begin{corollary}<cor:uni>
  Under the conditions of \Cref{thm:uni}, to reach the precision $\E*{f(x_K) - f^*}\leq \epsilon$, it is sufficient to perform the following number of iterations:
  \begin{equation}
    K = \tilde\cO\brs*{[\cR\cL_1]^4 + \brs{\cR^{1+\nu}\cL_0 / \epsilon}^{\smash[t]{\frac{2}{1+\nu}}} + \brs{\cR\sigma/ \epsilon}^2}.
  \end{equation}
\end{corollary}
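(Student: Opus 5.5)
Since \Cref{thm:uni} is proved under the same regime as \Cref{thm:exp2}, I would derive \Cref{cor:uni} by bounding $K$ from \cref{eq:KS_uni} in terms of the problem constants. The only quantities that are not explicit are $T$, $\alpha_1$, $\alpha_{TS+1}$ and the logarithms, which involve $\cF$ and $\hF$.

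\textbf{First stage.} By \cref{eq:alpha_uni}, $1/\alpha_1 = T\cR\cL_1$, so $2/(\alpha_1 T) = 2\cR\cL_1$. Hence
\[
  TS \leq T\bigl(1 + 2\cR\cL_1\ln(e + \cF/\hF)\bigr).
\]
With $T = \cO([\cR\cL_1]^2)$ (using $\cR\cL_1 \geq 1$) this gives $TS = \cO([\cR\cL_1]^3\ln(e+\cF/\hF))$, which is within $[\cR\cL_1]^4$ up to logarithms.

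\textbf{Logarithm of $\cF$.} This is the one substantive point: I have to show that $\ln(e+\cF/\hF)$ is only polynomial in the parameters, i.e.\ that $\cF$ is at most exponential. I would use \Cref{lem:SL01} with $x = x^*$ and $x' \in Q_\cR$, so $\norm{x'-x} \leq 2\cR$ and $h(x^*) = \beta^{1/(1+\nu)}$. This gives
\[
  h(x')^{1+\nu} \leq \cO\bigl((\beta + \cL_0 + \cL_1^{1+\nu}\beta)\,e^{2\cR\cL_1}(2\cR)^{1+\nu}\bigr),
\]
so $\cF \leq h(x')^{1+\nu}$ and $\ln(e+\cF/\hF) = \cO\bigl(\cR\cL_1 + \ln(\text{poly}(\cR,\cL_0,\cL_1,\sigma,1/\hF))\bigr)$. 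The $\cR\cL_1$ term makes $TS = \cO([\cR\cL_1]^4)$ up to logarithms. The edge cases need checking: $\beta$ may be $0$ or undefined, e.g.\ $\sigma = 0$ gives $\hF = \cL_0/\cL_1^{1+\nu}$. In those cases the polylogarithmic factors absorbed by $\tilde\cO$ depend on these ratios, which the notation allows.

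\textbf{Second stage.} This part is $\ceil{(2/\alpha_{TS+1})\ln(e+\hF/\epsilon)}$, and $\ln(e+\hF/\epsilon)$ is logarithmic. I would bound each term of the max in \cref{eq:alpha_uni}:
\begin{itemize}
  \item $T\cR\cL_1 = \cO([\cR\cL_1]^3)$.
  \item $(\cR\sigma/\epsilon)^2$ and $(\cR^{1+\nu}\cL_0/\epsilon)^{2/(1+\nu)}$ already appear in the target bound.
  \item For $T\cR\sigma/\epsilon$, Young's inequality gives $T\cR\sigma/\epsilon \leq T^2 + (\cR\sigma/\epsilon)^2 = \cO([\cR\cL_1]^4) + (\cR\sigma/\epsilon)^2$.
  \item Similarly, $\bigl([T\cR]^{1+\nu}\cL_0/\epsilon\bigr)^{1/(1+\nu)} = T\,(\cR^{1+\nu}\cL_0/\epsilon)^{1/(1+\nu)} \leq T^2 + (\cR^{1+\nu}\cL_0/\epsilon)^{2/(1+\nu)}$.
\end{itemize}
Summing the two stages gives the claimed bound.

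I expect the main obstacle to be controlling $\ln(\cF/\hF)$ via \Cref{lem:SL01}, including the degenerate cases in $\beta$. The rest is Young's inequality bookkeeping.
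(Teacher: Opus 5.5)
Your proposal is correct and follows the intended route. The paper does not write out this derivation, but the pattern is the same as for \Cref{cor:exp1,cor:exp2}: expand $TS$ and $2/\alpha_{TS+1}$, use an exponential bound on $\cF$ (here \Cref{lem:SL01} at $x=x^*$) to get $\ln(e+\cF/\hF)=\cO(\cR\cL_1)$, and absorb the cross terms $T\cR\sigma/\epsilon$ and $T(\cR^{1+\nu}\cL_0/\epsilon)^{1/(1+\nu)}$ by Young's inequality.

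One small fix to your displayed bound: write $h(x')^{1+\nu}\le 2^\nu[\beta+|h(x')-h(x^*)|^{1+\nu}]$ rather than placing $\beta$ inside the factor $e^{2\cR\cL_1}(2\cR)^{1+\nu}$, since that version fails when $2\cR<1$. Then, because $\beta\le\sigma/\cL_1\le\hF$ and $\cL_0\le\cL_1^{1+\nu}\hF$, the ratio collapses to $\cF/\hF=\cO\bigl(1+(\cR\cL_1)^{1+\nu}e^{2\cR\cL_1}\bigr)$. So the first stage picks up no extra polylogarithmic factors in the other constants.
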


As with the corresponding result for the generalized Lipschitz setting in \Cref{sec:exp}, the guarantee in \Cref{cor:uni} admits an informal two-phase interpretation: fast linear convergence in the first phase up to the precision $\hepsilon = \cO\brs{\max\brf{\cR\sigma/\brs{\cR\cL_1}^2,\cR^{1+\nu}\cL_0 / \brs{\cR\cL_1}^{2(1+\nu)}}}$, followed by the phase with the sublinear convergence rate, which matches the standard convergence rate of SGD for convex $(\nu,\cL_0)$-H\"older smooth functions with bounded stochastic gradient variance \citep{rodomanov2024universal}. It is worth highlighting that concurrent work by \citet{lobanov2026avoiding} obtains a similar first-phase linear convergence result in the generalized smooth case $\nu=1$; however, their guarantees rely on significantly stronger or even unrealistic assumptions, such as increasing batch sizes or the interpolation condition.

The result in \Cref{thm:uni} also implies that, up to logarithmic factors, the schedule of $\alpha_k$ can be parameterized by the overall number of iterations $K$ and the constants $\cR$ and $\cL_1$, without prior knowledge of the H\"older exponent $\nu$. This parameter independence across smoothness levels is called universality \citep{nesterov2015universal} and is also a feature of many other adaptive algorithms such as AdaGrad \citep{rodomanov2024universality}.

\newpage

\section{Extension to Quasar-Convex Setting}\label{sec:quasar}

\subsection{Quasar Convexity and Generalized Gradients}

In this section, we no longer assume the convexity of the objective function $f(x)$. Instead, we assume that the function $f(x)$ is $\gamma$-quasar convex for $\gamma \in (0,1]$, that is, the following inequality holds:
\begin{equation}\label{eq:quasar}
  f(t x^* + (1-t)x) \leq \gamma t f^* + (1-\gamma t)f(x)
  \;\;\text{for all}\;\;x \in \Rd,\; t \in [0,1].
\end{equation}
Refer to \citet{hinder2020near} for more details on quasar-convex functions.
Consequently, the function $f(x)$ may no longer be subdifferentiable at every point $x \in \Rd$. However, we can define the generalized subdifferential $\gpartial f(x) = \partial \phi_x(0)$, where $\phi_x(w) = f\gprime (x;w)$. Here, $f\gprime (x;w)$ is the generalized directional derivative \citep{clarke1990optimization}, which is a convex function with respect to the second argument and is defined for all $x, w \in \Rd$ as follows:
\begin{equation}\label{eq:gprime}
  f\gprime (x;w) = \tlimsup_{x'\to x, t\to +0} \tfrac{1}{t}\brs{f(x' + tw) - f(x')}.
\end{equation}
Note that both $\gpartial f(x)$ and $f\gprime (x;w)$ exist because the function $f(x)$ is assumed to be locally Lipschitz. Furthermore, we assume that the objective function $f(x)$ is directionally differentiable in the sense of Hadamard \citep{shapiro1990concepts}, that is, the following limit exists for all $x,w \in \Rd$:
\begin{equation}
  f'(x;w) = \tlim_{w'\to w, t \to +0} \tfrac{1}{t}\brs{f(x + tw') - f(x)}.
\end{equation}
Now, we are ready to describe the generalized gradient oracle that we will further use in this section. Specifically, we assume that there exists $\nabla f(x;w) \in \Rd$, which we refer to as the generalized gradient and which satisfies the following properties for all $x, w \in \Rd$:
\begin{equation}\label{eq:ggrad}
  \nabla f(x;w) \in \gpartial f(x)
  \quad\text{and}\quad
  \<w,\nabla f(x;w)> = f'(x;w).
\end{equation}
The existence of such a generalized gradient is guaranteed by \citet[Lemma~4]{zhang2020complexity}. Refer to \citet[Section~2]{zhang2020complexity} for more details on generalized gradients. Moreover, in the following \Cref{lem:quasar}, we show that the generalized gradient can be used to define quasar-convexity via the first-order criterion, just as it is for differentiable functions.
\begin{lemma}<lem:quasar>
  The inequality in \cref{eq:quasar} is equivalent to the following condition:
  \begin{equation}\label{eq:quasar_grad}
    \gamma\brs{f(x) - f^*}\leq \<x-x^*, \nabla f(x;w)>
    \quad\text{for all}\;\;
    x,w \in \Rd.
  \end{equation}
\end{lemma}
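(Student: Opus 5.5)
The plan is to prove the two implications separately, writing $d = x^* - x$ throughout. The forward direction, \cref{eq:quasar} $\Rightarrow$ \cref{eq:quasar_grad}, goes through the Clarke derivative $f\gprime(x;d)$ rather than the Hadamard derivative. The reverse direction is a one-dimensional comparison argument along the segment $[x,x^*]$.

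\textbf{Step 1: \cref{eq:quasar} $\Rightarrow$ \cref{eq:quasar_grad}.} Fix $x,w$ and set $g = \nabla f(x;w)$. By \cref{eq:ggrad}, $g \in \gpartial f(x) = \partial\phi_x(0)$ with $\phi_x(\cdot) = f\gprime(x;\cdot)$, and $\phi_x$ is positively homogeneous. Hence $\<d,g> \leq f\gprime(x;d)$, so it suffices to show
\begin{equation*}
  f\gprime(x;d) \leq -\gamma\brs{f(x)-f^*}.
\end{equation*}
Note that the Hadamard derivative $f'(x;d)$ alone would not suffice, since $f' \leq f\gprime$ points the wrong way.

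For $x'$ near $x$ and small $t>0$, apply \cref{eq:quasar} at $x'$ to get
\begin{equation*}
  f\brr{x' + t(x^*-x')} - f(x') \leq -\gamma t\brs{f(x') - f^*}.
\end{equation*}
Local Lipschitzness with constant $L$ near $x$ gives $\abs{f(x'+td) - f(x'+t(x^*-x'))} \leq L t\norm{x'-x}$. Combining these,
\begin{equation*}
  \tfrac1t\brs{f(x'+td)-f(x')} \leq -\gamma\brs{f(x')-f^*} + L\norm{x'-x}.
\end{equation*}
Taking $\limsup$ as $x'\to x$ and $t\to+0$, and using continuity of $f$, yields the claimed bound. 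Since $x - x^* = -d$, this gives $\<x-x^*,g> \geq \gamma\brs{f(x)-f^*}$.

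\textbf{Step 2: \cref{eq:quasar_grad} $\Rightarrow$ \cref{eq:quasar}.} Fix $x$; the case $x = x^*$ is trivial. Let $y_t = x + td$, and define
\begin{equation*}
  \psi(t) = f(y_t) - f^*, \qquad \ell(t) = (1-\gamma t)\psi(0)
\end{equation*}
on $[0,1]$. The goal is $\psi \leq \ell$. Because $f$ is Hadamard directionally differentiable, $\psi$ has right derivative $\psi'_+(t) = f'(y_t;d)$. By \cref{eq:ggrad} with $w=d$, this equals $\<d,\nabla f(y_t;d)>$. Since $y_t - x^* = -(1-t)d$, applying \cref{eq:quasar_grad} at $y_t$ with $w = d$ gives, for $t<1$,
\begin{equation*}
  \psi'_+(t) \leq -\gamma\psi(t)/(1-t).
\end{equation*}

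Suppose $\psi(t_0) > \ell(t_0)$ for some $t_0 < 1$; the endpoint $t=1$ then follows by continuity. Let $s = \sup\brf{t \leq t_0 : \psi(t) \leq \ell(t)}$, which exists since $\psi(0)=\ell(0)$. By continuity $\psi(s) = \ell(s)$, and $\psi > \ell \geq 0$ on $(s,t_0]$. On that interval,
\begin{equation*}
  \psi'_+(t) \leq -\frac{\gamma\psi(t)}{1-t} < -\frac{\gamma(1-\gamma t)\psi(0)}{1-t} \leq -\gamma\psi(0) = \ell'(t),
\end{equation*}
where the last inequality uses $1-\gamma t \geq 1-t$. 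So the continuous function $\psi-\ell$ has negative right Dini derivative everywhere on $(s,t_0]$. By the standard monotonicity theorem for Dini derivatives of continuous functions (e.g.\ \citet{hagood2006recovering}), $\psi - \ell$ is nonincreasing on $[s,t_0]$. This gives $\psi(t_0)-\ell(t_0) \leq 0$, a contradiction.

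\textbf{Expected obstacle.} The main subtlety is in Step~1: the subgradient inclusion only controls $\<d,g>$ by the Clarke derivative, so quasar-convexity must be transferred to nearby points $x'$ using the local Lipschitz perturbation above. A naive ODE integration in Step~2 would only give the weaker bound $(1-t)^\gamma$ in place of $1-\gamma t$. The comparison with the linear function $\ell$, which exploits $1-\gamma t \geq 1-t$, is what closes the argument.
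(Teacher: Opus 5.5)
Your Step~1 is the paper's argument essentially line for line. The paper likewise bounds $\langle x^*-x,\nabla f(x;w)\rangle$ by the Clarke derivative $f^\circ(x;x^*-x)$. It then transfers quasar-convexity to nearby points $x'$ and absorbs the $t\|x'-x\|$ perturbation by local Lipschitzness.

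Your Step~2 is correct but takes a different route. The paper works as follows:
\begin{itemize}
\item It parametrizes from the minimizer, $p(t)=f(x^*+t(x-x^*))-f^*$.
\item It uses the integral representation of $p(t')-p(t)$ through Hadamard directional derivatives (Lemma~3 of \citet{zhang2020complexity}) to get $p(t')-p(t)\ge\int_t^{t'}\frac{\gamma}{s}p(s)\,ds$.
\item It introduces $q(t)=p(c)+\int_c^t\frac{\gamma}{s}p(s)\,ds$, which satisfies $p(c)\le q(t)\le p(t)$.
\item It integrates $\frac{d}{dt}\ln q\ge\gamma/t$ to obtain $p(c)\le c^\gamma p(1)$.
\item Only then does it relax via concavity, $c^\gamma\le 1-\gamma(1-c)$, with $c=1-t$.
\end{itemize}
You instead use only the pointwise right derivative along the segment, a first-crossing comparison against a barrier, and the Dini monotonicity theorem (the same tool the paper uses for \Cref{lem:M01}). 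This avoids the integral representation, the logarithm, and the separate case $p(c)=0$. The paper's route gives as by-products the sharper estimate $f(tx^*+(1-t)x)-f^*\le(1-t)^\gamma[f(x)-f^*]$ and monotonicity of $f$ along rays from $x^*$.

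Your closing remark is backwards. For $\gamma\in(0,1]$, concavity of $u\mapsto u^\gamma$ gives $(1-t)^\gamma\le 1-\gamma t$. So integrating $\psi'_+\le-\gamma\psi/(1-t)$ yields the \emph{stronger} bound $\psi(t)\le(1-t)^\gamma\psi(0)$, which is exactly what the paper derives before relaxing. Your comparison argument also works verbatim with the barrier $\ell(t)=(1-t)^\gamma\psi(0)$, since $\ell'(t)=-\gamma\ell(t)/(1-t)$ exactly. The inequality $1-\gamma t\ge 1-t$ is therefore not what closes the argument; it only makes the linear barrier admissible. This misstatement does not affect the validity of your proof.
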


We also introduce the following \Cref{ass:qG01}, which is an extension of \Cref{ass:G01} for the generalized gradient setting.

\begin{assumption}<ass:qG01>
  There exists a stochastic estimator $\nabla_\xi f(x;w)$ of the generalized gradient $\nabla f(x;w)$, where $\xi \sim \cD$ is a random variable sampled from the distribution $\cD$. The generalized gradient estimator $\nabla_\xi f(x;w)$ is unbiased, i.e., $\E[\xi \sim \cD]{\nabla_\xi f(x;w)} = \nabla f(x;w)$, and allows the decomposition $\nabla_\xi f(x;w) = u_\xi(x;w) + v_\xi(x;w)$, satisfying
  \begin{equation}
    \E[\xi \sim \cD]{\sqn{u_\xi(x;w)}}\leq \cG_0^2
    \quad\text{and}\quad
    \norm{v_\xi(x;w)} \las \cG_1\brs{f(x) - f^*},
    \quad\text{where\;\;}
    \cG_0,\cG_1 > 0.
  \end{equation}
\end{assumption}

\begin{lemma}<rem:qGM01>
  \Cref{ass:qG01} implies \Cref{ass:M01} with $\cM_0 = \cG_0$ and $\cM_1 = \cG_1$.
\end{lemma}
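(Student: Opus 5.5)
The plan is to bound the left-hand side of \Cref{ass:M01} by the norm of a generalized gradient, and then bound that norm in expectation using the decomposition in \Cref{ass:qG01}. Unlike \Cref{rem:GM01}, we cannot use a convex subgradient inequality. Instead, we rely on Hadamard directional differentiability and the identity $\<w,\nabla f(x;w)> = f'(x;w)$ from \cref{eq:ggrad}.

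\textbf{Step 1 (reduction to directional derivatives).} Fix $x\in\Rd$. Pick a sequence $u_n\to0$, $u_n\neq 0$, along which $\frac{1}{\norm{u_n}}\abs{f(x+u_n)-f(x)}$ converges to the $\limsup$ in \Cref{ass:M01}. This $\limsup$ is finite because $f$ is locally Lipschitz. Write $u_n = t_n w_n$ with $t_n=\norm{u_n}\to+0$ and $\norm{w_n}=1$. By compactness of the unit sphere, pass to a subsequence with $w_n\to w$, where $\norm{w}=1$. Hadamard directional differentiability then gives $\frac{1}{t_n}\brs{f(x+t_nw_n)-f(x)}\to f'(x;w)$. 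Hence the $\limsup$ equals $\abs{f'(x;w)}$ for some unit vector $w$.

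\textbf{Step 2 (Cauchy--Schwarz).} By \cref{eq:ggrad}, $f'(x;w)=\<w,\nabla f(x;w)>$. Therefore $\abs{f'(x;w)}\le\norm{w}\norm{\nabla f(x;w)}=\norm{\nabla f(x;w)}$. This holds for either sign of $f'(x;w)$, so no symmetry of $f'$ is required. This is what makes the argument work without convexity.

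\textbf{Step 3 (bounding the generalized gradient).} Using unbiasedness and the decomposition in \Cref{ass:qG01}, together with Jensen's inequality, we get
\begin{equation*}
  \norm{\nabla f(x;w)} = \norm{\E{u_\xi(x;w)+v_\xi(x;w)}} \le \mysqrt{\E{\sqn{u_\xi(x;w)}}} + \E{\norm{v_\xi(x;w)}} \le \cG_0 + \cG_1\brs{f(x)-f^*}.
\end{equation*}
Combining this with Steps 1 and 2 yields \Cref{ass:M01} with $\cM_0=\cG_0$ and $\cM_1=\cG_1$.

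I expect Step 1 to be the only delicate point. The $\limsup$ runs over arbitrary $u\to0$, not over a fixed ray, so Hadamard (rather than Gateaux) differentiability is exactly what is needed: it lets the direction $w_n$ vary along the sequence. The compactness extraction handles this. Integrability of $u_\xi$ and $v_\xi$ is implicit in the assumption, so Step 3 is routine.
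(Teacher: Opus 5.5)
Your proof is correct and complete. The paper states this lemma without proof, as it does for \Cref{rem:GM01} in the convex case, so there is no argument of the paper's to compare against; your three steps fill that gap. Applying Cauchy--Schwarz to $\langle w,\nabla f(x;w)\rangle = f'(x;w)$ correctly replaces the convex subgradient inequality and handles both signs of $f'(x;w)$.

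Two brief comments. First, the step you flag as delicate is routine. With a local Lipschitz constant $L_x$ near $x$ you have $|f(x+t_nw_n)-f(x+t_nw)|\le L_x t_n\|w_n-w\|$. So one-sided differentiability along the fixed ray $w$ already gives the limit along $(t_n,w_n)$; for locally Lipschitz functions the Hadamard and ray-wise notions coincide.

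Second, an alternative route, closer to the tools the paper uses in \Cref{sec:quasar} (the proofs of \Cref{lem:quasar,thm:quasar}), is Lemma~3 of \citet{zhang2020complexity} together with \cref{eq:ggrad}:
\[
|f(x+u)-f(x)| = \Bigl|\int_0^1 \langle \nabla f(x+su;u), u\rangle \, ds\Bigr| \le \|u\|\int_0^1 \bigl(\cG_0 + \cG_1[f(x+su)-f^*]\bigr)\, ds .
\]
Here the integrand is bounded by your Step~3, applied at the point $x+su$ with direction $u$. Dividing by $\|u\|$ and using continuity of $f$ then gives \Cref{ass:M01}. This version skips the compactness extraction and yields a segment-wise bound directly, but it relies on an external integral representation. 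Your argument is purely local and uses only the directional derivative at $x$ itself.
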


\subsection{Algorithm and Convergence Analysis}

Similar to \Cref{sec:alg}, we consider the iterations in \cref{eq:wd}, where the iterates $z_k \in Q_\cR$ are obtained by an online learning algorithm for bounding the regret $\reg_K$ in \cref{eq:regret}, satisfying \Cref{ass:regret}, but with a different definition of $g_k$:
\begin{equation}\label{eq:g_quasar}
  g_k = \alpha_k\pi_k\nabla_{\xi_k} f(\ox_k;x_k - x_{k-1}),\quad \ox_k = x_{k-1} + \zeta_k(x_k - x_{k-1}),
\end{equation}
where the samples $\xi_k \sim \cD$ and $\zeta_k \sim \text{u.a.r.}([0,1])$ are independent of each other and of $z_1,\dots,z_k$. Here, the generalized stochastic gradient is computed at the point $\ox_k$ sampled uniformly at random on the segment $[x_k, x_{k-1}]$ which is inspired by the approach of \citet{zhang2020complexity,cutkosky2023optimal} for non-smooth non-convex optimization. Below, we derive the main convergence result and explicit iteration complexity bound in \Cref{thm:quasar} and \Cref{cor:quasar}.

\begin{theorem}<thm:quasar>
  Let \Cref{ass:qG01,ass:regret} hold and let $\cR\cG_1 \geq 1$. Let $\pi_k$ be defined as follows:
  \begin{equation}\label{eq:pi_quasar}
    \pi_0 = 1,\qquad (1-\gamma\alpha_k)\pi_k = \pi_{k-1}.
  \end{equation}
  Let $T = \smash{\ceil*{\hC\brs{\cR\cG_1/\gamma}^2}}$ for a sufficiently large universal constant $\hC$, and let $\alpha_k$ be defined as follows:
  \begin{equation}\label{eq:alpha_quasar}
    1/\alpha_k =
    \begin{cases}
      T\cR\cG_1 & k \leq S T \\
      \max\brf*{T\cR\cG_1, [1/\gamma] \cdot \brs{\cR\cG_0/\epsilon}^2, [1/\gamma] \cdot \brs{T\cR\cG_0/\epsilon}}  & k > S T
    \end{cases}.
  \end{equation}
  Let $K,S \in \N$ be defined as follows:
  \begin{equation}\label{eq:KS_quasar}
    K = TS + \ceil*{\brs{2/(\gamma\alpha_{TS+1})} \ln \brs{e + \brs{\cG_0/(\gamma\cG_1\epsilon)}}}
    ,\quad
    S = \ceil*{\brs{2/(\gamma\alpha_1T)} \ln \brs{e + \brs{\gamma\cF\cG_1/\cG_0}}}.
  \end{equation}
  Then, the inequality $\E*{f(x_K) - f^*}\leq \cO\brs{\epsilon}$ holds.
\end{theorem}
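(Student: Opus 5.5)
The proof follows the convex template of \Cref{thm:exp2}, which rests on \Cref{lem:wd} and \Cref{lem:regret_exp2}. Each ingredient is replaced by its quasar-convex counterpart; throughout, $\gamma$ is absorbed into the weights $\pi_k$ and into $T$.

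\textbf{Step 1 (analogue of \Cref{lem:wd}).} By the fundamental theorem of calculus along the segment $[x_{k-1},x_k]$, using Hadamard directional differentiability and $\<w,\nabla f(x;w)> = f'(x;w)$ from \cref{eq:ggrad}, we get
\[
f(x_k)-f(x_{k-1}) = \E[\zeta_k]{\<x_k-x_{k-1},\nabla f(\ox_k;x_k-x_{k-1})>}.
\]
This needs $t\mapsto f(x_{k-1}+t(x_k-x_{k-1}))$ to be absolutely continuous, which holds since $f$ is locally Lipschitz. From \cref{eq:wd} we have $x_k-x_{k-1}=\alpha_k(z_k-x_{k-1})$. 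We also need to relate $\<x_{k-1}-x^*,\nabla f(\ox_k;\cdot)>$ to the gap; the natural fix is to write $x_{k-1}-x^* = (\ox_k-x^*) - \zeta_k(x_k-x_{k-1})$ and apply \Cref{lem:quasar} at $\ox_k$. This gives
\[
\gamma\brs{f(\ox_k)-f^*} \le \<\ox_k - x^*,\nabla f(\ox_k;w)>.
\]
Combining these with the recursion $(1-\gamma\alpha_k)\pi_k=\pi_{k-1}$ should yield, after telescoping and taking expectations (unbiasedness holds because $\xi_k$ is independent of $z_k$, $\zeta_k$),
\[
\pi_K\E{f(x_K)-f^*}\le \pi_0\brs{f(x_0)-f^*} + \E{\reg_K} + (\text{error}).
\]
The error is controlled by $\gamma\alpha_k\pi_k\brs{f(x_{k})-f(\ox_k)}$-type terms, which involve second-order differences $\alpha_k^2\pi_k\norm{z_k-x_{k-1}}\cdot\norm{\nabla}$. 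I would bound these with the generalized Lipschitz estimate from \Cref{rem:qGM01} and \Cref{lem:M01}: since $\norm{x_k-x_{k-1}}\le 2\alpha_k\cR$ and $\alpha_k\cR\cG_1\le 1/T$, the exponential factor is $\cO(1)$. The error therefore becomes $\cO\brs{\alpha_k^2\pi_k\cR\brs{\cG_0+\cG_1(f(x_k)-f^*)}}$. Its $\cG_1$-part is at most a small multiple of $\gamma\alpha_k\pi_k\brs{f(x_k)-f^*}$ because $\alpha_k\cR\cG_1\le1/T\ll\gamma$, and its $\cG_0$-part matches the $T\cR\cG_0\sum\alpha_k^2\pi_k$ term.

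\textbf{Step 2 (regret, analogue of \Cref{lem:regret_exp2}).} By \Cref{ass:regret}, $\reg_K\le \cC\cR(\sum\sqn{g_k})^{1/2}$. Splitting $g_k=\alpha_k\pi_k(u+v)$ and using $\sqrt{a+b}\le\sqrt a+\sqrt b$ with Jensen, the $u$-part gives $\cR\cG_0(\sum\alpha_k^2\pi_k^2)^{1/2}$. The $v$-part is $\cC\cR\cG_1\,\E{(\sum\alpha_k^2\pi_k^2\brs{f(\ox_k)-f^*}^2)^{1/2}}$, which must be bounded by $\tfrac{\gamma}{2}\sum\alpha_k\pi_k\E{f(x_k)-f^*}$ plus lower-order terms. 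Here I would reuse the stability argument of \Cref{lem:regret_exp1}, applied verbatim with $\ox_k$ in place of $x_k$ (using \Cref{lem:M01} to pass between them). Over windows of length $T$ the iterates move by at most $T\alpha\cR\le 1/\cG_1$, so the function gaps within a window are comparable up to constants plus $\cO(\cG_0/\cG_1)$. Moreover, $\pi_k$ varies by only a constant factor within a window since $\gamma\alpha T\le 1$. Hence
\[
\Big(\tsum_k \alpha_k^2\pi_k^2\delta_k^2\Big)^{1/2}\lesssim T^{-1/2}\tsum_k\alpha_k\pi_k\delta_k+\dots,
\]
and choosing $T=\ceil{\hC(\cR\cG_1/\gamma)^2}$ makes the coefficient $\cR\cG_1 T^{-1/2}\le \gamma/(2\cC')$.

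\textbf{Step 3 (conclusion).} Inserting the regret bound into Step 1 and absorbing $\tfrac{\gamma}{2}\sum\alpha_k\pi_k\E{f(x_k)-f^*}$ gives a bound on $\E{f(x_K)-f^*}$. Since $\pi_k$ grows like $\prod_k(1-\gamma\alpha_k)^{-1}$, this bound is
\[
\frac{\pi_0\cF}{\pi_K}+\cO\Big(\frac{\cR\cG_0\sqrt{\sum\alpha_k^2\pi_k^2}+T\cR\cG_0\sum\alpha_k^2\pi_k}{\pi_K}\Big).
\]
With the ratio estimates $\sqrt{\sum\alpha^2\pi_k^2}/\pi_K\lesssim\sqrt{\alpha/\gamma}$ and $\sum\alpha^2\pi_k/\pi_K\lesssim\alpha/\gamma$, the two stages of \cref{eq:alpha_quasar,eq:KS_quasar} follow exactly as in \Cref{thm:exp2}. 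The first stage contracts $\cF$ to $\cO(\cG_0/(\gamma\cG_1))$ in $ST$ steps, and the second reaches $\epsilon$.

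\textbf{Main obstacle.} I expect the hardest part to be Step 2's stability argument. Gradients are now taken at the random points $\ox_k$, and quasar-convexity gives no monotonicity, so the windowed comparison of $\delta_k$ must be carried out purely through \Cref{lem:M01}, with careful conditioning on $\zeta_k$.
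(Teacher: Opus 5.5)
Your Steps 1 and 2 are sound and follow the paper's route closely. The paper also writes $f(x_k)-f(x_{k-1})$ as the $\zeta_k$-average of $\langle \nabla f(\ox_k;x_k-x_{k-1}),x_k-x_{k-1}\rangle$, applies \Cref{lem:quasar} at $\ox_k$, and controls the second-order leftovers with \Cref{lem:M01} and $\alpha_k\cR\cG_1\le 1/T\ll\gamma$. Your splitting $x_{k-1}-x^*=(\ox_k-x^*)-\zeta_k(x_k-x_{k-1})$ is a valid variant of the paper's use of $(1-\alpha_k)(x_k-x_{k-1})=\alpha_k(z_k-x_k)$. The paper then reruns the windowed argument of \Cref{lem:regret_exp2} pathwise, so the conditioning on $\zeta_k$ you flag as the main obstacle is harmless. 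This bounds the self-referential part of $\E{\reg_K}$ by $\tfrac{\gamma}{4}\sum_k\alpha_k\pi_k\Delta_k$, where $\Delta_k=\E{f(x_k)-f^*}$.

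The gap is in Step 3. What Steps 1--2 give is, for every $K\ge T$,
\[
\pi_K\Delta_K\le \delta_K+\frac{\gamma}{2}\sum_{k=1}^K\alpha_k\pi_k\Delta_k ,
\]
where $\delta_K$ collects the initial gap and the $\cG_0$-terms. This sum cannot be ``absorbed'': the left side involves only the last iterate. Since $\pi_k-\pi_{k-1}=\gamma\alpha_k\pi_k$, you have $\sum_{k\le K}\alpha_k\pi_k=(\pi_K-\pi_0)/\gamma$, so the extra term is $\tfrac12(\pi_K-\pi_0)$ times a weighted average of all past gaps. That is the same order as the left side, not a lower-order term.

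Consequently your displayed bound $\pi_0\cF/\pi_K+\dots$ does not follow. For constant $\alpha$ and $\delta_k\equiv c$, the sequence attaining equality for all $k$ has
\[
\Delta_K=c\,\big(\tfrac{1-\gamma\alpha}{1-\gamma\alpha/2}\big)^K\approx c\,e^{-\gamma\alpha K/2},
\]
not $c/\pi_K\approx c\,e^{-\gamma\alpha K}$.

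The missing ingredient is a discrete Gr\"onwall step, the paper's \Cref{lem:tech2}. Set $\tau_k=\sum_{i\le k}\alpha_i\pi_i\Delta_i$; then $\tau_k\le\tau_{k-1}+\alpha_k\delta_k+\tfrac{\gamma\alpha_k}{2}\tau_k$. The inequality $\tfrac{\gamma\alpha_k}{2}\le 1-(1-\gamma\alpha_k)^{1/2}$ turns this into $\tau_k/\sqrt{\pi_k}\le(\tau_{k-1}+\alpha_k\delta_k)/\sqrt{\pi_{k-1}}$. Hence
\[
\Delta_K\le\cO\big(\delta_K/\pi_K+\textstyle\sum_{k}\gamma\alpha_k\delta_k/\sqrt{\pi_K\pi_{k-1}}\big).
\]

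This lemma needs the recursion for every $k\in\N$, including $k<T$, where the windowed regret bound is unavailable. There you use $\pi_k\Delta_k\le e\cF$, since $x_k\in Q_\cR$ and $\pi_k\le e$.

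The price is a halved exponent. The final bound is
\[
\cF(1-\gamma\alpha_1)^{ST/2}(1-\gamma\alpha_{ST+1})^{(K-ST)/2}+\tfrac{\cG_0}{\gamma\cG_1}(1-\gamma\alpha_{ST+1})^{(K-ST)/2}+\cO(\epsilon),
\]
which is exactly why $K$ and $S$ in \cref{eq:KS_quasar} carry the factor $2$. With this lemma inserted, your two-stage bookkeeping goes through; without it the argument does not close.
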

\begin{corollary}<cor:quasar>
  Under the conditions of \Cref{thm:quasar}, to reach the precision $\E*{f(x_K) - f^*}\leq \epsilon$, it is sufficient to perform the following number of iterations:
  \begin{equation}
    K = \tilde\cO\brs*{[1/\gamma]^6 \cdot [\cR\cG_1]^4 + [1/\gamma]^2 \cdot\brs{\cR\cG_0 / \epsilon}^2}.
  \end{equation}
\end{corollary}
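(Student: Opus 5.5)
Corollary~\ref{cor:quasar} follows from \Cref{thm:quasar} by counting iterations. Everything reduces to bounding $K = TS + \ceil{[2/(\gamma\alpha_{TS+1})]\ln(e+\cG_0/(\gamma\cG_1\epsilon))}$ from \cref{eq:KS_quasar}, with $T$, $\alpha_k$ and $S$ substituted from \cref{eq:alpha_quasar}. The logarithmic factors involving $\cF$ will be controlled with \cref{eq:MF_bound}. By \Cref{rem:qGM01}, that bound applies with $\cM_0=\cG_0$ and $\cM_1=\cG_1$.

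\textbf{First stage.} Since $1/\alpha_1 = T\cR\cG_1$, we get
\[
S = \ceil{[2/\gamma]\,\cR\cG_1\ln(e+\gamma\cF\cG_1/\cG_0)}.
\]
By \cref{eq:MF_bound}, $\cF\cG_1/\cG_0 \le \exp(\cO[\cR\cG_1])$, and $\gamma\le 1$. Hence
\[
\ln(e+\gamma\cF\cG_1/\cG_0) = \cO(1+\cR\cG_1) = \cO(\cR\cG_1),
\]
using $\cR\cG_1\ge1$. This is the one point that is not pure bookkeeping: one must check that the exponential bound on $\cF$ only enters through a logarithm. The cost is an extra factor $\cR\cG_1$, which is polynomial rather than exponential. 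It gives $S = \cO([1/\gamma](\cR\cG_1)^2)$. Using $T = \cO([1/\gamma]^2(\cR\cG_1)^2)$, we obtain
\[
TS = \cO([1/\gamma]^3(\cR\cG_1)^4).
\]
This sits inside the claimed $[1/\gamma]^6(\cR\cG_1)^4$, with room to spare for the rounding in $T$.

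\textbf{Second stage.} Here $1/\alpha_{TS+1}$ is the maximum of three terms, and we bound it by their sum. The log factor $\ln(e+\cG_0/(\gamma\cG_1\epsilon))$ is polylogarithmic in the problem constants, so $\tilde\cO$ absorbs it. The three contributions, each multiplied by $2/\gamma$, are as follows.
\begin{itemize}
\item The term $T\cR\cG_1/\gamma = \cO([1/\gamma]^3(\cR\cG_1)^3)$, which is at most $[1/\gamma]^6(\cR\cG_1)^4$.
\item The term $[1/\gamma]^2(\cR\cG_0/\epsilon)^2$, which is exactly the second claimed term.
\item The cross term $[1/\gamma]^2\,T\cR\cG_0/\epsilon = \cO([1/\gamma]^4(\cR\cG_1)^2(\cR\cG_0/\epsilon))$.
\end{itemize}

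\textbf{Cross term.} This is the only genuine inequality step. Apply AM--GM in the form $ab\le a^2+b^2$ with
\[
a = [1/\gamma]^3(\cR\cG_1)^2, \qquad b = [1/\gamma](\cR\cG_0/\epsilon).
\]
This gives
\[
[1/\gamma]^4(\cR\cG_1)^2(\cR\cG_0/\epsilon) \le [1/\gamma]^6(\cR\cG_1)^4 + [1/\gamma]^2(\cR\cG_0/\epsilon)^2 .
\]

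Summing the two stages and absorbing constants and logarithms into $\tilde\cO$ yields the stated complexity.
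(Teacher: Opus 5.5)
Your proposal is correct and follows the substitution argument the paper leaves implicit. You plug the parameters of \Cref{thm:quasar} into $K$, use \cref{eq:MF_bound} (via \Cref{rem:qGM01}) to bound $\ln(e+\gamma\cF\cG_1/\cG_0)$ by $\cO\brs{\cR\cG_1}$, and absorb the cross term by AM--GM, which is exactly where the $[1/\gamma]^6$ factor in the statement comes from. The only step left unstated is the harmless rescaling of $\epsilon$ by a universal constant, which converts the theorem's $\cO\brs{\epsilon}$ guarantee into exactly $\epsilon$.
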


\newpage

\section{Beyond Scalar stepsizes}\label{sec:preconditioning}

\begin{algorithm}[t]
  \caption{AdamW/LeonW (Diagonal Preconditioning)}
  \label{alg:diag}
  \begin{algorithmic}[1]
    \State $m_0 = 0$, $v_0 = \delta \ones$, $x_1 = 0$
    \For{$k=1,\dots,K-1$}
    \State $\hg_k = \nabla_{\xi_k} f(x_k)$, where $\xi_k \sim \cD$
    \State $m_k = (1-\alpha_k)m_{k-1}+ \alpha_k \hg_k$,\;\; $v_k = (1-\alpha_k)^2v_{k-1} +  \alpha_k^2 \brr{\hg_k\odot \hg_k}$
    \State Option I (AdamW): $x_{k+1} = (1-\alpha_{k+1})x_k - \alpha_{k+1}\cR \clip[\infty]{m_k \oslash \mysqrt{v_k}}$
    \label{line:diag:adam}
    \State Option II (LeonW): $x_{k+1} = (1-\alpha_{k+1})x_k - \alpha_{k+1}\cR \brr{m_k \oslash \mysqrt{m_k\odot m_k + v_k}}$
    \label{line:diag:leon}
    \EndFor
  \end{algorithmic}
\end{algorithm}

\begin{algorithm}[t]
  \caption{LeonW (Matrix Preconditioning)}
  \label{alg:matrix}
  \begin{algorithmic}[1]
    \State $M_0 = 0$, $V_0 = \delta I$, $X_1 = 0$
    \For{$k=1,\dots,K-1$}
    \State $\hG_k = \nabla_{\xi_k} f(X_k)$, where $\xi_k \sim \cD$
    \State $M_k = (1-\alpha_k)M_{k-1}+ \alpha_k \hG_k$,\;\; $V_k = (1-\alpha_k)^2V_{k-1} +  \alpha_k^2 \hG_k\hG_k^\top$
    \State $X_{k+1} = (1-\alpha_{k+1})X_k - \alpha_{k+1} \cR \mysqrt{\brr{M_kM_k^{\top} + V_k}^{-1}}M_k$
    \EndFor
  \end{algorithmic}
\end{algorithm}

In this section, we show that the results obtained in the main part of the paper are not specific to scalar stepsizes. First, we consider \Cref{alg:diag} with diagonal preconditioning, which has two options. Option~I in \Cref{line:diag:adam} solves the online linear optimization problem in \cref{eq:regret} using SOLO-FTRL, which leads to AdamW with the standard diagonal preconditioning and clipped updates. Option~II in \Cref{line:diag:leon} solves the problem in \cref{eq:regret} using a diagonal variant of Leon and leads to a slight variation of AdamW without clipping, which we call LeonW. We also consider a variant of LeonW with matrix preconditioning, \Cref{alg:matrix}, which solves the problem in \cref{eq:regret} using the matrix variant of Leon.

The convergence analysis for these algorithms is very similar to that in \Cref{sec:alg}, but it can exploit structural properties of the objective function, such as different coordinate scales (\Cref{alg:diag}) or low-rank gradients (\Cref{alg:matrix}). We will provide a unified analysis based on the framework of \citet{kovalev2025non,kovalev2025sgd} in the next version of the paper.

\newpage

\section[Proofs for \crtCref{sec:M01}]{Proofs for \Cref{sec:M01}}

\proofsubsection{lem:M01}

\textbf{\Cref{eq:M01} implies \Cref{ass:M01}.} We can upper $\limsup_{u \to 0}\frac{1}{\norm{u}}\abs{f(x + u) - f(x)}$ as follows:
\begin{align*}
  \limsup_{u \to 0}\tmfrac{1}{\norm{u}}\abs{f(x + u) - f(x)}
  &\at{uses \cref{eq:M01}}\leq
  \limsup_{u \to 0}\brr*{\cM_0 + \cM_1\brs*{f(x) - f^*}}\exp\brr{\cM_1\norm{u}}
  \\&=
  \cM_0 + \cM_1\brs*{f(x) - f^*},
\end{align*}
where \annotate.

\textbf{\Cref{ass:M01} implies \Cref{eq:M01}.}
Let the function $p(t)\colon \R_+ \to \R$ be defined as follows:
\begin{align*}
  p(t) = f(x + t(x'-x)) - f^*,
\end{align*}
let the function $q(t)\colon \R_+ \to \R$ be defined as follows:
\begin{align*}
  q(t) =  \norm{x'-x}\smash{\mint_0^t} \brr{\cM_0 + \cM_1p(\tau)}\df \tau,
\end{align*}
and let the function $r(t)\colon \R_+ \to \R$ be defined as follows:
\begin{align*}
  r(t) = \abs{p(t) - p(0)} - q(t).
\end{align*}
First, we need to show that $r(t) \leq 0$ for all $t \in \R_+$. Observe that $r(0) = 0$. Hence, it sufficient to show that the function $r(t)$ is non-increasing. Let $r^+(t)$ be the upper Dini derivative of the function $r(t)$, which is defined as follows:
\begin{align*}
  r^+(t) = \limsup_{t' \downarrow t}\tmfrac{1}{t' - t}\brs{r(t') - r(t)}.
\end{align*}
We can upper-bound $r^+(t)$ as follows:
\begin{align*}
  r^+(t)
  &\at{uses the definition of the functions $r(t)$ and $p(t)$}=
  \limsup_{t' \downarrow t}\tmfrac{1}{t' - t}\brs{\abs{p(t') - p(0)} - q(t') - \abs{p(t) - p(0)} + q (t)}
  \\&\leq
  \limsup_{t' \downarrow t}
  \tmfrac{1}{t' - t}\brs{\abs{p(t') - p(0)} - \abs{p(t) - p(0)}}
  -
  \lim_{t' \to t}
  \tmfrac{1}{t' - t}\brs{q(t') - q(t)}
  \\&\leq
  \limsup_{t' \downarrow t}
  \tmfrac{1}{t' - t}\abs{p(t') - p(t)}
  -
  \lim_{t' \to t}
  \tmfrac{1}{t' - t}\brs{q(t') - q(t)}
  \\&\at{uses the definition of the function $q(t)$ and the fact that the function $p(t)$ is continuous}=
  \limsup_{t' \downarrow t}
  \tmfrac{1}{t' - t}\abs{p(t') - p(t)}
  -
  \norm{x'-x}\brr{\cM_0 + \cM_1p(t)}
  \\&\leq
  \limsup_{t' \to t}
  \tmfrac{1}{\abs{t' - t}}\abs{p(t') - p(t)}
  -
  \norm{x'-x}\brr{\cM_0 + \cM_1p(t)}
  \\&\at{uses the definition of the function $p(t)$ and \Cref{ass:M01}}\leq
  0,
\end{align*}
where \annotate. Hence, by the Monotonicity Theorem \citep{hagood2006recovering}, we conclude that the function $r(t)$ is non-increasing, which implies $r(t) \leq 0$ for all $t \in \R_+$.

Next, since the function $p(t)$ is continuous, the derivative $\dot{q}(t)$ of the function $q(t)$ exists and can be upper-bounded as follows:
\begin{align*}
  \dot{q}(t)
  &\at{uses the definition of $q(t)$}=
  \brr*{\cM_0 + \cM_1 p(t)}\norm{x'-x}
  \\&\leq
  \brr*{\cM_0 + \cM_1 \brs*{p(0)+ \abs{p(t) - p(0)}}}\norm{x'-x}
  \\&\at{uses the inequality $r(t) \leq 0$}\leq
  \brr{\cM_0 + \cM_1 \brs{p(0)+ q(t)}}\norm{x'-x},
\end{align*}
where \annotate.
After rearranging, we obtain the following inequality:
\begin{align*}
  \dot{l}(t) \leq \cM_1\norm{x'-x},
\end{align*}
where $l(t) = \ln \brs{\cM_0 + \cM_1\brs*{p(0) + q(t)}}$.
By integrating the inequality, we obtain
\begin{align*}
  \ln \brs{\cM_0 + \cM_1\brs{p(0) + q(1)}} - \ln \brs{\cM_0 + \cM_1\brs{p(0) + q(0)}} \leq \cM_1\norm{x'-x}.
\end{align*}
By taking the exponent of both sides and rearranging, we obtain
\begin{align*}
  \cM_0 + \cM_1\brs{p(0) + q(1)} &\leq \brs*{\cM_0 + \cM_1\brs{p(0) + q(0)}}\exp\brr{\cM_1\norm{x'-x}}
  \\&\at{uses the definition of $q(t)$}=
  \brs*{\cM_0 + \cM_1p(0)}\exp\brr{\cM_1\norm{x'-x}}
\end{align*}
where \annotate.
After rearranging, we obtain the following
\begin{align*}
  q(1) &\leq \brs{\cM_0 + \cM_1p(0)}\tmfrac{1}{\cM_1}\brs{\exp\brr{\cM_1\norm{x'-x}} - 1}
  \\&\at{uses the convexity of the exponent}\leq
  \brs{\cM_0 + \cM_1p(0)}\exp\brr{\cM_1\norm{x'-x}}\norm{x'-x},
\end{align*}
where \annotate.
Finally, we use the inequality $r(1) \leq 0$.\qed

\newpage

\section[Proofs for \crtCref{sec:basic_alg}]{Proofs for \Cref{sec:basic_alg}}

\proofsubsection{thm:GD}

We define $\eta = \eta_k$ and proceed with the following inequality:
\begin{align*}
  \tfrac{1}{2}\sqn{x_{k+1} - x^*}
  &\at{uses the iterations in \cref{eq:GD} and the fact that $x^* \in Q_\cR$}=
  \tfrac{1}{2}\sqn{\proj[Q_\cR]{x_k - \eta\nabla f(x_k)} - \proj[Q_\cR]{x^*}}
  \\&\at{uses the non-expansiveness of the projection}\leq
  \tfrac{1}{2}\sqn{x_k - x^* - \eta\nabla f(x_k)}
  \\&=
  \tfrac{1}{2}\sqn{x_k - x^*}
  -\eta\<\nabla f(x_k),x_k - x^*>
  +\tfrac{1}{2}\eta^2\sqn{\nabla f(x_k)}
  \\&\at{uses the convexity of the function $f(x)$}\leq
  \tfrac{1}{2}\sqn{x_k - x^*}
  -\eta\brs*{f(x_k)  - f^*}
  +\tfrac{1}{2}\eta^2\sqn{\nabla f(x_k)}
  \\&\at{uses \Cref{ass:M01}}\leq
  \tfrac{1}{2}\sqn{x_k - x^*}
  -\eta\brs*{f(x_k)  - f^*}
  +\tfrac{1}{2}\eta^2\brr{\cM_0 + \cM_1\brs{f(x_k) - f^*}}^2
  \\&\at{uses the definitions in \cref{eq:MF} and the fact that $\norm{x_k}\leq\cR$}\leq
  \tfrac{1}{2}\sqn{x_k - x^*}
  -\eta\brs*{f(x_k)  - f^*}
  +\eta^2\cM_0^2
  +\eta^2\cM_1^2\cF\brs{f(x_k) - f^*}
  \\&\leq
  \tfrac{1}{2}\sqn{x_k - x^*}
  -\tfrac{1}{2}\eta\brs*{f(x_k)  - f^*}
  +\eta^2\cM_0^2
\end{align*}
where \annotate. Next, we can obtain the following:
\begin{align}
  \nonumber
  f(\ox_K) - f^*
  &\at{uses the convexity of the function $f(x)$ and the definition of $\ox_K$}\leq
  \tmfrac{1}{K+1}\msum_{k=0}^{K}\brs{f(x_k) - f^*}
  \at{uses the inequality above and the fact that $\norm{x^*} \leq \cR$ and $x_0 = 0$}\leq
  \tmfrac{\cR^2}{\eta(K+1)} + 2\eta\cM_0^2
  \\&\at{use the definitions in \cref{eq:GD_params}}\leq
  \cO\brs*{\tmfrac{\cR\cM_0}{\sqrt{K+1}} +  \tmfrac{\brs{\cR\cM_1}^2\cF}{K+1}}
  \at{use the definitions in \cref{eq:GD_params}}\leq
  \cO\brs{\epsilon},\label{eq:GD_rate}
\end{align}
where \annotate.\qed

\proofsubsection{thm:normalized_polyak}

We start with the following inequality:
\begin{align*}
  \tfrac{1}{2}\sqn{x_{k+1} - x^*}
  &\at{uses the iterations in \cref{eq:GD} and the fact that $x^* \in Q_\cR$}=
  \tfrac{1}{2}\sqn{\proj[Q_\cR]{x_k - \eta_k\nabla f(x_k)} - \proj[Q_\cR]{x^*}}
  \\&\at{uses the non-expansiveness of the projection}\leq
  \tfrac{1}{2}\sqn{x_k - x^* - \eta_k\nabla f(x_k)}
  \\&=
  \tfrac{1}{2}\sqn{x_k - x^*} - \eta_k\<\nabla f(x_k),x_k - x^*> + \tfrac{1}{2}\eta_k^2\sqn{\nabla f(x_k)}
  \\&\at{uses the convexity of the function $f(x)$}\leq
  \tfrac{1}{2}\sqn{x_k - x^*} - \eta_k\brs{f(x_k) - f^*} + \tfrac{1}{2}\eta_k^2\sqn{\nabla f(x_k)},
\end{align*}
where \annotate.
Further, for Option~A, we proceed as follows:
\begin{align*}
  \tfrac{1}{2}\sqn{x_{k+1} - x^*}
  &\at{uses Option~A in \cref{eq:normalized_polyak_eta}}\leq
  \tfrac{1}{2}\sqn{x_k - x^*} - \tmfrac{\cR\brs{f(x_k) - f^*}}{\sqrt{K+1}\norm{\nabla f(x_k)}} + \tmfrac{\cR^2}{2\brs{K+1}}
  \\&\at{uses \Cref{ass:M01}}\leq
  \tfrac{1}{2}\sqn{x_k - x^*} - \tmfrac{\cR\brs{f(x_k) - f^*}}{\sqrt{K+1}\brr{\cM_0 + \cM_1\brs{f(x_k) - f^*}}} + \tmfrac{\cR^2}{2\brs{K+1}}
  \\&\leq
  \tfrac{1}{2}\sqn{x_k - x^*}
  - \min\brf*{\tmfrac{\cR\brs{f(x_k) - f^*}}{2\sqrt{K+1}\cM_0},\tmfrac{\cR}{2\sqrt{K+1}\cM_1}}
  + \tmfrac{\cR^2}{2\brs{K+1}},
\end{align*}
where \annotate.
After summing these inequalities and rearranging, we obtain the following:
\begin{align*}
  \min\brf*{\tmin_{k \in \brf{0,\dots,K}}\brs*{f(x_k) - f^*},\tmfrac{\cM_0}{\cM_1}}
  \leq \tmfrac{2\cR\cM_0}{\sqrt{K+1}}.
\end{align*}
From the choice of $K$ in \cref{eq:normalized_polyak_K}, we have $\tmfrac{\cM_0}{\cM_1} > \tmfrac{2\cR\cM_0}{\sqrt{K+1}}$. Hence, the minimum in the inequality above is attained in the first term, which implies
\begin{align*}
  \tmin_{k \in \brf{0,\dots,K}}\brs*{f(x_k) - f^*} \leq \tmfrac{2\cR\cM_0}{\sqrt{K+1}} \leq \epsilon.
\end{align*}
Finally, for Option~B, we proceed as follows:
\begin{align*}
  \tfrac{1}{2}\sqn{x_{k+1} - x^*}
  &\at{uses Option~B in \cref{eq:normalized_polyak_eta}}\leq
  \tfrac{1}{2}\sqn{x_k - x^*} - \tmfrac{\brs{f(x_k) - f^*}^2}{2\sqn{\nabla f(x_k)}}
  \\&\at{uses \Cref{ass:M01}}\leq
  \tfrac{1}{2}\sqn{x_k - x^*} - \tmfrac{\brs{f(x_k) - f^*}^2}{2\brs{\cM_0 + \cM_1\brs{f(x_k) - f^*}}^2}
  \\&\leq
  \tfrac{1}{2}\sqn{x_k - x^*} - \min\brf*{\tmfrac{\brs{f(x_k) - f^*}^2}{8\cM_0^2},\tmfrac{1}{8\cM_1^2}},
\end{align*}
where \annotate.
After summing these inequalities and rearranging, we obtain the following:
\begin{align*}
  \min\brf*{\tmin_{k \in \brf{0,\dots,K}}\brs*{f(x_k) - f^*},\tmfrac{\cM_0}{\cM_1}}^2
  \leq \tmfrac{4\brs{\cR\cM_0}^2}{K+1}.
\end{align*}
The rest of the proof is identical to Option~A.\qed

\proofsubsection{thm:SGD}

We define $\eta = \eta_k$ and proceed with the following inequality:
\begin{align*}
  \E*{\tfrac{1}{2}\sqn{x_{k+1} - x^*}}
  &\at{uses the iterations in \cref{eq:SGD} and the fact that $x^*\in Q_\cR$}=
  \E*{\tfrac{1}{2}\sqn{\proj[Q_\cR]{x_k - \eta\nabla_{\xi_k} f(x_k)} - \proj[Q_\cR]{x^*}}}
  \\&\at{uses the non-expansiveness of the projection}\leq
  \E*{\tfrac{1}{2}\sqn{x_k - x^* - \eta\nabla_{\xi_k} f(x_k)}}
  \\&=
  \E*{\tfrac{1}{2}\sqn{x_k - x^*}
    -\eta\<\nabla_{\xi_k} f(x_k),x_k - x^*>
  +\tfrac{1}{2}\eta^2\sqn{\nabla_{\xi_k} f(x_k)}}
  \\&\at{uses the unbiasedness property in \Cref{ass:G01} and the convexity of the function $f(x)$}\leq
  \E*{\tfrac{1}{2}\sqn{x_k - x^*}
    -\eta\brs*{f(x_k)  - f^*}
  +\tfrac{1}{2}\eta^2\sqn{\nabla_{\xi_k} f(x_k)}}
  \\&\at{use \Cref{ass:G01}}\leq
  \E*{\tfrac{1}{2}\sqn{x_k - x^*}
    -\eta\brs*{f(x_k)  - f^*}
    +\eta^2\sqn{u_{\xi_k}(x_k)} + \eta^2\sqn{v_{\xi_k}(x_k)}
  }
  \\&\at{use \Cref{ass:G01}}\leq
  \E*{\tfrac{1}{2}\sqn{x_k - x^*}
    -\eta\brs*{f(x_k)  - f^*}
    +\eta^2\cG_0^2 + \eta^2\cG_1^2\brs{f(x_k) - f^*}^2
  }
  \\&\at{uses the definition in \cref{eq:MF} and the fact that $\norm{x_k}\leq\cR$}\leq
  \E*{\tfrac{1}{2}\sqn{x_k - x^*}
    -\eta\brs*{f(x_k)  - f^*}
    +\eta^2\cG_0^2 + \eta^2\cG_1^2\cF\brs{f(x_k) - f^*}
  }
  \\&\at{uses the definition in \cref{eq:SGD_params}}\leq
  \E*{\tfrac{1}{2}\sqn{x_k - x^*}
    -\tfrac{1}{2}\eta\brs*{f(x_k)  - f^*}
    +\eta^2\cG_0^2
  },
\end{align*}
where \annotate. After summing these inequalities, we obtain
\begin{align*}
  \E{f(\ox_K) - f^*}
  &\at{uses the convexity of the function $f(x)$ and the definition of $\ox_K$}\leq
  \tmfrac{1}{K+1}\msum_{k=0}^{K}\E*{f(x_k) - f^*}
  \at{uses the inequality above and the fact that $\norm{x^*} \leq \cR$ and $x_0 = 0$}\leq
  \tmfrac{\cR^2}{\eta(K+1)} + 2\eta\cG_0^2
  \\&\at{uses the definitions in \cref{eq:SGD_params}}\leq
  \cO\brs*{\tmfrac{\cR\cG_0}{\sqrt{K+1}} + \tmfrac{\brs{\cR\cG_1}^2\cF}{K+1}}
  \at{uses the definition in \cref{eq:SGD_params}, \Cref{rem:GM01}, and the inequality in \cref{eq:MF_bound}}\leq
  \cO\brs{\epsilon},
\end{align*}
where \annotate.\qed

\proofsubsection{thm:AdaGrad}

Using the definition of $\ox_K$ and the convexity of $f(x)$, we can upper-bound $\E{f(\ox_K) - f^*}$ as follows:
\begin{align*}
  \E{f(\ox_K) - f^*}
  \leq
  \tmfrac{1}{K+1}\msum_{k=0}^K\E*{f(x_k) - f^*}.
\end{align*}
Next, we can upper-bound $\sum_{k=0}^K\E*{f(x_k) - f^*}$ as follows:
\begin{align*}
  \msum_{k=0}^K\E*{f(x_k) - f^*}
  &\at{uses the convexity of the function $f(x)$}\leq
  \msum_{k=0}^K\E*{\<\nabla f(x_k), x_k - x^*>}
  \\&\at{uses the unbiasedness in \Cref{ass:G01}}=
  \E*{\msum_{k=0}^K\<\nabla_{\xi_k} f(x_k), x_k - x^*>}
  \\&\at{uses the standard upper-bound on the regret for AdaGrad-Norm \citep[Theorem~2]{streeter2010less}}\leq
  \cO\brs*{\cR\E*{\sqrt{\vphantom{\rule{0pt}{8.5pt}}\smash{\tsum_{k=0}^K \sqn{\nabla_{\xi_k}f(x_k)}}}}}
  \\&\at{uses \Cref{ass:G01}}\leq
  \cO\brs*{\cR\E*{\sqrt{\vphantom{\rule{0pt}{8.5pt}}\smash{\tsum_{k=0}^K \sqn{u_{\xi_k}(x_k) + v_{\xi_k}(x_k)}}}}}
  \\&\leq
  \cO\brs*{\cR\E*{
      \sqrt{\vphantom{\rule{0pt}{8.5pt}}\smash{\tsum_{k=0}^K \sqn{u_{\xi_k}(x_k)}}}
      +
      \sqrt{\vphantom{\rule{0pt}{8.5pt}}\smash{\tsum_{k=0}^K \sqn{v_{\xi_k}(x_k)}}}
  }}
  \\&\at{uses \Cref{ass:G01}, Jensen's inequality, and the concavity of the square root}\leq
  \cO\brs*{
    \cR\cG_0\sqrt{K+1}
    +
    \cR\cG_1\E*{\sqrt{\vphantom{\rule{0pt}{8.5pt}}\smash{\tsum_{k=0}^K \brs{f(x_k) - f^*}^2}}}
  }
  \\&\at{uses the definition in \cref{eq:MF} and the fact that $\norm{x_k}\leq\cR$}\leq
  \cO\brs*{
    \cR\cG_0\sqrt{K+1}
    +
    \cR\cG_1\E*{\sqrt{\vphantom{\rule{0pt}{8.5pt}}\smash{\tsum_{k=0}^K \cF\brs{f(x_k) - f^*}}}}
  }
  \\&\at{uses Young's inequality}\leq
  \tfrac{1}{2}\msum_{k=0}^K \E{f(x_k) - f^*}
  +
  \cO\brs*{\cR\cG_0\sqrt{K+1} + \brs{\cR\cG_1}^2\cF},
\end{align*}
where \annotate. After rearranging, we obtain the following:
\begin{align*}
  \tmfrac{1}{K+1}\msum_{k=0}^K\E*{f(x_k) - f^*}
  &\leq
  \cO\brs*{
    \tmfrac{\cR\cG_0}{\sqrt{K+1}} + \tmfrac{\brs{\cR\cG_1}^2\cF}{K+1}
  }
  \at{uses the definition in \cref{eq:AdaGrad_params}, \Cref{rem:GM01}, and the inequality in \cref{eq:MF_bound}}\leq
  \cO\brs{\epsilon},
\end{align*}
where \annotate.\qed

\newpage

\section[Proofs for \crtCref{sec:alg}]{Proofs for \Cref{sec:alg}}

\subsection{Technical Lemma}

In this section, we present a technical lemma that will be used in the proofs of \Cref{thm:avg,,thm:exp1,,thm:exp2}.

\begin{lemma}<lem:tech>
  Let $p \in (0,1/2]$, and let $\brf{\Delta_k}_{k \in \N} \subset \R_+$ and $\brf{\delta_k}_{k \in \N} \subset \R_+$ be two sequences of non-negative numbers satisfying the following condition for all $k \in \N$:
  \begin{equation}
    \pi_k \Delta_k \leq \delta_k + p\tsum_{i=1}^k \alpha_i\pi_i \Delta_i,
  \end{equation}
  where $\alpha_k,\pi_k$ satisfy \cref{eq:fk}. Then, the following inequalities hold for all $k \in \N$:
  \begin{equation}
    \tsum_{i=1}^k\alpha_i\pi_i \Delta_i
    \leq
    \tsum_{i=2}^k \alpha_i\delta_i\brs*{\pi_k/\pi_{i-1}}^p
    +\alpha_1\delta_1\min\brf*{\brs*{\pi_k/\pi_0}^p, 2\brs*{\pi_k/\pi_1}^p}.
  \end{equation}
\end{lemma}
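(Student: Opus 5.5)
The plan is to turn the implicit condition into a one-step recursion for the partial sums $S_k = \sum_{i=1}^k \alpha_i\pi_i\Delta_i$, with $S_0 = 0$, and then unroll it. Multiplying the assumed inequality by $\alpha_k \geq 0$ and using $S_k - S_{k-1} = \alpha_k\pi_k\Delta_k$ gives
\begin{equation*}
  S_k - S_{k-1} \leq \alpha_k\delta_k + p\alpha_k S_k,
  \qquad\text{i.e.}\qquad
  (1-p\alpha_k) S_k \leq S_{k-1} + \alpha_k\delta_k .
\end{equation*}
Since $p \leq 1/2$ and $\alpha_k \leq 1$, we have $1-p\alpha_k \geq 1/2 > 0$, so we may divide by it.

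The key step is to compare the factor $(1-p\alpha_k)^{-1}$ with the growth of $\pi_k$. By \cref{eq:fk}, $\pi_k/\pi_{k-1} = (1-\alpha_k)^{-1}$. Bernoulli's inequality (concavity of $t\mapsto t^p$ for $p\in(0,1]$) gives $(1-\alpha_k)^p \leq 1-p\alpha_k$. Hence
\begin{equation*}
  (1-p\alpha_k)^{-1} \leq (1-\alpha_k)^{-p} = \brs{\pi_k/\pi_{k-1}}^p ,
\end{equation*}
and therefore
\begin{equation*}
  S_k \leq \brs{\pi_k/\pi_{k-1}}^p\brr{S_{k-1} + \alpha_k\delta_k}.
\end{equation*}
The degenerate case $\alpha_k = 1$ needs a remark. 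Then $\pi_{k-1} = 0$, so the ratio is $+\infty$ under the paper's conventions and the bound is trivially true. The same convention covers $\pi_0 = 0$ in the first term of the minimum.

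Unrolling this recursion from $S_0 = 0$, the products of ratios telescope: $\prod_{j=i}^k \brs{\pi_j/\pi_{j-1}}^p = \brs{\pi_k/\pi_{i-1}}^p$. This yields
\begin{equation*}
  S_k \leq \tsum_{i=1}^k \alpha_i\delta_i\brs{\pi_k/\pi_{i-1}}^p,
\end{equation*}
which is the claimed bound with the first entry of the minimum.

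For the second entry, I would handle $k=1$ directly instead of using the ratio $\pi_1/\pi_0$, which may be infinite. From $(1-p\alpha_1)S_1 \leq \alpha_1\delta_1$ and $1-p\alpha_1 \geq 1/2$ we get $S_1 \leq 2\alpha_1\delta_1$. Unrolling the recursion only from $i=2$ then gives the term $2\alpha_1\delta_1\brs{\pi_k/\pi_1}^p$. Since both unrollings are valid, $S_k$ is bounded by the minimum of the two bounds on the $\alpha_1\delta_1$ term, while the terms for $i\geq 2$ coincide.

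There is no real obstacle here. The only delicate points are the Bernoulli comparison $(1-\alpha)^p \leq 1-p\alpha$, which is exactly where $p \leq 1$ is used (and $p\leq 1/2$ gives the constant $2$), and treating zero values of $\pi$ consistently with the conventions $1/0=+\infty$.
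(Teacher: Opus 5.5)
Your proposal is correct and follows essentially the same route as the paper. The paper also sets $\tau_k=\sum_{i=1}^k\alpha_i\pi_i\Delta_i$, uses $p\alpha_k\le 1-(1-\alpha_k)^p$ to get $\pi_k^{-p}\tau_k\le\pi_{k-1}^{-p}(\tau_{k-1}+\alpha_k\delta_k)$, unrolls this, and bounds $\tau_1\le 2\alpha_1\delta_1$ via $p\alpha_1\le 1/2$. The only difference is presentational: the paper splits into cases $\pi_0\neq 0$ and $\pi_0=0$ to obtain the two entries of the minimum, whereas you observe that both unrollings hold unconditionally, which is exactly what the minimum in the statement requires.
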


\begin{proof}
  We define $\tau_k \geq 0$ for $k \in \N$ as follows:
  \begin{align*}
    \tau_k = \tsum_{i=1}^k \alpha_i\pi_i\Delta_i.
  \end{align*}
  We can upper-bound $\tau_k$ as follows:
  \begin{align*}
    \tau_k
    \at{uses the definition of $\tau_k$}=
    \alpha_k\pi_k\Delta_k + \tau_{k-1}
    \at{uses the assumption of \Cref{lem:tech}}\leq
    \alpha_k\delta_k + p\alpha_k\tau_k + \tau_{k-1}
    \at{uses the inequality $pt \leq 1 - (1-t)^p$ for $t \leq 1$ which is implied by the concavity of the function $t\mapsto t^p$}\leq
    \alpha_k\delta_k + \brs*{1 - (1-\alpha_k)^p}\tau_k + \tau_{k-1},
  \end{align*}
  where \annotate.
  After rearranging, we obtain the following:
  \begin{align*}
    \alpha_k\delta_k + \tau_{k-1}
    \geq
    (1-\alpha_k)^p\tau_k
    \at{uses the definition in \cref{eq:fk}}=
    \brs*{\pi_{k-1}/\pi_k}^p\tau_k
  \end{align*}
  where \annotate.
  After rearranging one more time, we obtain the following:
  \begin{align*}
    \brs*{1/\pi_k}^p\tau_k \leq \brs*{1/\pi_{k-1}}^p\brs*{\tau_{k-1} + \alpha_k\delta_k}.
  \end{align*}
  In the case where $\pi_0 \neq 0$, we can upper-bound $\tau_k$ as follows:
  \begin{align*}
    \tau_k \leq \tsum_{i=1}^k \alpha_i\delta_i\brs*{\pi_k/\pi_{i-1}}^p.
  \end{align*}
  In the case $\pi_0 = 0$, we can upper-bound $\tau_k$ as follows:
  \begin{align*}
    \tau_k
    &\leq
    \tsum_{i=2}^k \alpha_i\delta_i\brs*{\pi_k/\pi_{i-1}}^p + \tau_1\brs*{\pi_k/\pi_1}^p
    \\&\at{uses the definition of $\tau_k$}=
    \tsum_{i=2}^k \alpha_i\delta_i\brs*{\pi_k/\pi_{i-1}}^p + \alpha_1 \pi_1 \Delta_1 \brs*{\pi_k/\pi_1}^p
    \\&\at{uses the fact that $p\alpha_1 \leq 1/2$ and the assumption of \Cref{lem:tech}}\leq
    \tsum_{i=2}^k \alpha_i\delta_i\brs*{\pi_k/\pi_{i-1}}^p
    +  2\alpha_1\delta_1\brs*{\pi_k/\pi_1}^p,
  \end{align*}
  where \annotate.
\end{proof}

\proofsubsection{lem:wd}
We can lower-bound $\E{\reg_K}$ as follows
\begin{align*}
  \E{\reg_K}
  &\at{uses the definition in \cref{eq:regret}}\geq
  \msum_{k=1}^K \E{\<\nabla_{\xi_k} f_k(z_k),z_k - x^*>}
  \\&\at{uses the unbiasedness in \Cref{ass:G01}, the fact that $\xi_k$ is independent of $z_k$ and $x_{k-1}$, and the definition in \cref{eq:fk}}=
  \msum_{k=1}^K \E{\<\nabla f_k(z_k),z_k - x^*>}
  \\&\at{uses the convexity of $f(x)$ and the definition in \cref{eq:fk}}\geq
  \msum_{k=1}^K \E{f_k(z_k) - f_k(x^*)}
  \\&\at{use the definitions in \cref{eq:fk}}=
  \msum_{k=1}^K \pi_k\E{f(\alpha_k z_k + (1-\alpha_k)x_{k-1}) - f(\alpha_k x^* + (1-\alpha_k)x_{k-1})}
  \\&\at{uses the iterations in \cref{eq:wd}}=
  \msum_{k=1}^K \pi_k\E{f(x_k) - f(\alpha_k x^* + (1-\alpha_k)x_{k-1})}
  \\&\at{use the convexity of $f(x)$}\geq
  \msum_{k=1}^K \brr{\pi_k\E{f(x_k) - f^*} - (1-\alpha_k)\pi_k\E{f(x_{k-1}) - f^*}}
  \\&\at{use the definitions in \cref{eq:fk}}=
  \msum_{k=1}^K \brr{\pi_k\E{f(x_k) - f^*} - \pi_{k-1}\E{f(x_{k-1}) - f^*}}
  \\&=
  \pi_K \E{f(x_K) - f^*} - \pi_0 \brs{f(x_0) - f^*},
\end{align*}
where \annotate. Rearranging concludes the proof.\qed

\proofsubsection{lem:regret}

We can upper-bound $\E{\reg_K}$ as follows:
\begin{align*}
  \E{\reg_K}
  &\at{uses \Cref{ass:regret} and the definition in \cref{eq:regret}}\leq
  \E*{\cC\cR\sqrt{\tsum_{k=1}^K \sqn{\nabla_{\xi_k} f_k(z_k)}}}
  \\&\at{uses \cref{eq:wd} and the definition in \cref{eq:fk}}=
  \E*{\cC\cR\sqrt{\tsum_{k=1}^K \alpha_k^2\pi_k^2\sqn{\nabla_{\xi_k} f(x_k)}}}
  \\&\at{use \Cref{ass:G01}}\leq
  \E*{\cC\cR\sqrt{\tsum_{k=1}^K \alpha_k^2\pi_k^2\sqn{u_{\xi_k}(x_k) + v_{\xi_k}(x_k)}}}
  \\&\leq
  \E*{\cC\cR\sqrt{\tsum_{k=1}^K \alpha_k^2\pi_k^2\sqn{u_{\xi_k}(x_k)}}
  + \cC\cR\sqrt{\tsum_{k=1}^K \alpha_k^2\pi_k^2\sqn{v_{\xi_k}(x_k)}}}
  \\&\at{uses Jensen's inequality and the concavity of the square root}\leq
  \cC\cR\sqrt{\tsum_{k=1}^K \alpha_k^2\pi_k^2\E*{\sqn{u_{\xi_k}(x_k)}}}
  + \E*{\cC\cR\sqrt{\tsum_{k=1}^K \alpha_k^2\pi_k^2\sqn{v_{\xi_k}(x_k)}}}
  \\&\at{use \Cref{ass:G01}}\leq
  \cC\cR\cG_0\sqrt{\tsum_{k=1}^K \alpha_k^2\pi_k^2}
  + \E*{\cC\cR\cG_1\sqrt{\tsum_{k=1}^K \alpha_k^2\pi_k^2\brs*{f(x_k) - f^*}^2}},
\end{align*}
where \annotate.\qed

\proofsubsection{thm:avg}

From the definition in \cref{eq:fk} and the definition $\alpha_k = 1/k$, it follows that $\pi_k = k$.
Next, let $p \in (0,1/2]$ and let $\Delta_k = \E{f(x_k) - f^*}$. Then, we can upper-bound $\pi_k\Delta_k = k\Delta_k$ as follows:
\begin{align*}
  \pi_k \Delta_k
  &\at{uses \Cref{lem:wd}}\leq
  \E{\reg_k}
  \\&\at{uses \Cref{lem:regret}}\leq
  \cC\cR\cG_0\sqrt{\tsum_{i=1}^k \alpha_i^2\pi_i^2}
  +
  \E*{\cC\cR\cG_1\sqrt{\tsum_{i=1}^k \alpha_i^2\pi_i^2\brs*{f(x_i) - f^*}^2}}
  \\&\at{uses the fact that $\alpha_k \pi_k = 1$}=
  \cC\cR\cG_0\sqrt{k}
  +
  \E*{\cC\cR\cG_1\sqrt{\tsum_{i=1}^k \alpha_i\pi_i\brs*{f(x_i) - f^*}^2}}
  \\&\at{uses the definition of $\cF$, Jensen's inequality, and the definition of $\Delta_i$}\leq
  \cC\cR\cG_0\sqrt{k}
  +
  \cC\cR\cG_1\sqrt{\tsum_{i=1}^k \cF\alpha_i\pi_i\Delta_i}
  \\&\at{uses Young's inequality}\leq
  \cC\cR\cG_0\sqrt{k}
  +
  \tmfrac{1}{4p}\brs{\cC\cR\cG_1}^2\cF
  +
  p\tsum_{i=1}^k \alpha_i\pi_i\Delta_i
\end{align*}
where \annotate.
Now, we can define $\delta_k = \cC\cR\cG_0\sqrt{k}+\tmfrac{1}{4p}\brs{\cC\cR\cG_1}^2\cF$ and $p = \min\big\{\frac{1}{\ln K},\frac{1}{4}\big\}$, and obtain the following inequality:
\begin{align*}
  \mi{2}\tsum_{i=1}^k\alpha_i\pi_i \Delta_i
  \\&\at{uses \Cref{lem:tech}}\leq
  \tsum_{i=2}^k \alpha_i\delta_i\brs*{\tmfrac{\pi_k}{\pi_{i-1}}}^p
  +2\alpha_1\delta_1\brs*{\tmfrac{\pi_k}{\pi_1}}^p
  \\&\at{uses the definition of $\delta_i$}=
  \tsum_{i=2}^k \alpha_i\brs*{\tmfrac{\pi_k}{\pi_{i-1}}}^p\brs*{\cC\cR\cG_0\sqrt{i}+\tmfrac{1}{4p}\brs{\cC\cR\cG_1}^2\cF}
  +2\alpha_1\brs*{\tmfrac{\pi_k}{\pi_1}}^p\brs*{\cC\cR\cG_0+\tmfrac{\brs{\cC\cR\cG_1}^2\cF}{4p}}
  \\&\at{uses the fact that $\alpha_i = 1/i$ and $\pi_i=i$}=
  \tsum_{i=2}^k \tmfrac{k^p}{i(i-1)^p}\brs*{\cC\cR\cG_0\sqrt{i}+\tmfrac{1}{4p}\brs{\cC\cR\cG_1}^2\cF}
  +2k^p\brs*{\cC\cR\cG_0+\tmfrac{\brs{\cC\cR\cG_1}^2\cF}{4p}}
  \\&\leq
  k^p\tsum_{i=1}^{k-1} \brs*{\tmfrac{\cC\cR\cG_0}{i^{p+1/2}} + \tmfrac{\brs{\cC\cR\cG_1}^2\cF}{4pi^{p+1}}}
  +2k^p\brs*{\cC\cR\cG_0+\tmfrac{\brs{\cC\cR\cG_1}^2\cF}{4p}}
  \\&\at{uses the fact that $p+1 > 1$ and $p+1/2\in (0,1)$}\leq
  k^p \brs*{
    \cC\cR\cG_0\brs*{1 + \tmfrac{k^{1/2-p} - 1}{1/2 - p}}
    +
    \tmfrac{(p+1)\brs{\cC\cR\cG_1}^2\cF}{4p^2}
    +
    2\cC\cR\cG_0
    +
    \tmfrac{\brs{\cC\cR\cG_1}^2\cF}{2p}
  }
  \\&\at{uses the fact that $p \leq 1/4$}\leq
  \brs*{4\cC\cR\cG_0\sqrt{k} + \tmfrac{\brs{\cC\cR\cG_1}^2\cF k^p}{2p^2}}
\end{align*}
where \annotate.
Next, we upper-bound $\Delta_K$ as follows:
\begin{align*}
  \Delta_K
  &\at{uses the previously obtained upper bound on $\pi_K\Delta_K$ and the fact that $\pi_K = K$}\leq
  \tmfrac{\cC\cR\cG_0}{\sqrt{K}}
  +
  \tmfrac{\cC\cR\cG_1}{K}\sqrt{\tsum_{k=1}^K \cF\alpha_k\pi_k\Delta_k}
  \\&\at{uses the previously obtained upper bound on $\sum_{k=1}^K\alpha_k\pi_k \Delta_k$}\leq
  \tmfrac{\cC\cR\cG_0}{\sqrt{K}}
  +
  \tmfrac{\cC\cR\cG_1}{K}\sqrt{
    4\cC\cR\cG_0\cF K^{1/2} + \tmfrac{\brs{\cC\cR\cG_1\cF}^2K^p}{2p^2}
  }
  \\&\at{uses the fact that $p \leq 1/\ln K$}\leq
  \tmfrac{\cC\cR\cG_0}{\sqrt{K}}
  +
  \tmfrac{\cC\cR\cG_1}{K}\sqrt{
    4\cC\cR\cG_0\cF K^{1/2} + \tmfrac{\brs{\cC\cR\cG_1\cF}^2e}{2p^2}
  }
  \\&\leq
  \tmfrac{\cC\cR\cG_0}{\sqrt{K}}
  +
  \tmfrac{2\sqrt{\cC\cR\cG_0\cF}\cC\cR\cG_1}{K^{3/4}}
  +
  \tmfrac{\sqrt{e}\brs{\cC\cR\cG_1}^2\cF}{\sqrt{2}pK}
  \\&\at{uses Young's inequality and the choice of $p$}\leq
  \cO\brs*{
    \tmfrac{\cR\cG_0}{\sqrt{K}}
    +
    \tmfrac{\brs{1+\ln K}\brs{\cR\cG_1}^2\cF}{K}
  },
\end{align*}
where \annotate.
Finally, we define $\kappa = e+\brs{\cR\cG_0/\epsilon} + \brs{\cR\cG_1}^2\cdot\brs{\cF/\epsilon}$ and upper-bound $\Delta_K$ as follows:
\begin{align*}
  \Delta_K
  &\leq
  \cO\brs*{
    \tmfrac{\cR\cG_0}{\sqrt{K}}
    +
    \tmfrac{\brs{1+\ln K}\brs{\cR\cG_1}^2\cF}{K}
  }
  \\&\at{uses the definition in \cref{eq:K0}}\leq
  \cO\brs*{
    \epsilon
    +\epsilon\ln^{-1}[\kappa]\brs*{
      1+ \ln\brs*{1 + \brs{\cR\cG_0/\epsilon}^2 + \brs{\cR\cG_1}^2\cdot\brs{\cF/\epsilon}\cdot\ln[\kappa]}
    }
  }
  \\&\leq
  \cO\brs*{
    \epsilon
    +\epsilon\ln^{-1}[\kappa]
    \brs*{1+ \ln\brs*{\kappa^2\ln[\kappa]}}
  }
  \\&\leq
  \cO\brs*{
    \epsilon
    +
    \epsilon\ln^{-1}[\kappa]
    \brr*{
      1
      +2\ln [\kappa]
      +\ln\brs*{\ln[\kappa]}
    }
  }
  \\&\leq
  \cO\brs*{
    \epsilon
    +
    \epsilon\ln^{-1}[\kappa]
    \brr*{
      1
      +3\ln [\kappa]
    }
  }
  \\&\leq
  \cO\brs{\epsilon},
\end{align*}
where \annotate.\qed

\proofsubsection{lem:regret_exp1}

First, let $J = \floor{K/T} \geq 1$ and define $\cK_s \subset \N$ for $s \in \brf{1,\dots,J}$ as follows:
\begin{align*}
  \cK_s =
  \begin{cases}
    \brf{(s-1)T+1,\dots, sT} & s \in \brf{1,\dots,J-1}\\
    \brf{(s-1)T+1,\dots, K} & s = J
  \end{cases},
\end{align*}
and let $k_s = \min \cK_s$.
It is easy to observe that $\sqcup_{s=1}^J \cK_s = \brf{1,\dots,K}$ and $T \leq \abs{\cK_s} \leq 2T-1$.
Further, we can upper-bound $\norm{x_k - x_{k_s}}$ for $k \in \cK_s$ as
\begin{align*}
  \norm{x_k - x_{k_s}}
  &\at{uses the triangle inequality}\leq
  \msum_{i=k_s}^{k-1}\norm{x_{i+1} - x_{i}}
  \at{uses the definition in \cref{eq:wd} and the assumption in \cref{eq:alpha_T}}\leq
  \msum_{i=k_s}^{k-1}\alpha\norm{z_{i+1} - x_{i}}
  \at{uses the fact that $z_{i+1},x_i \in Q_\cR$}\leq
  \msum_{i=k_s}^{k-1}2\alpha\cR
  \at{use the fact that $k,k_s \in \cK_s$ and the properties of $\cK_s$}\leq
  \cO\brs{\alpha T\cR}
  \at{uses the assumptions in \cref{eq:alpha_T}}\leq
  \cO\brs{1/\cG_1},
\end{align*}
where \annotate.
Next, we can upper-bound $\pi_k$ for $k \in \cK_s$ as follows:
\begin{align*}
  \pi_k
  &\at{uses the definition in \cref{eq:fk}}\leq
  \pi_{k_s}\mprod_{i=k_s+1}^k(1-\alpha_i)^{-1}
  \at{uses the assumptions in \cref{eq:alpha_T} and the fact that $k \in \cK_s$}\leq
  \pi_{k_s}(1-1/T)^{1-2T}
  =
  \pi_{k_s}(1+1/(T-1))^{2T-1}
  \leq
  \cO\brs{\pi_{k_s}},
\end{align*}
where \annotate.
Next, we can obtain the following inequality:
\begin{align*}
  \mi{0}\sqrt{\tsum_{k=1}^K \alpha_k^2\pi_k^2\brs*{f(x_k) - f^*}^2}
  =
  \sqrt{\tsum_{s=1}^J\tsum_{k\in\cK_s} \alpha_k^2\pi_k^2\brs*{f(x_k) - f^*}^2}
  \\&\leq
  \sqrt{\tsum_{s=1}^J\tsum_{k\in\cK_s} \alpha_k^2\pi_k^2\brs*{f(x_{k_s}) - f^*}^2}
  +\sqrt{\tsum_{s=1}^J\tsum_{k\in\cK_s} \alpha_k^2\pi_k^2\brs*{f(x_k) - f(x_{k_s})}^2}
  \\&\at{uses \Cref{lem:M01}, \Cref{rem:GM01}, and the upper bound on $\norm{x_k - x_{k_s}}$ for $k \in \cK_s$ above}\leq
  \sqrt{\tsum_{s=1}^J\tsum_{k\in\cK_s} \alpha_k^2\pi_k^2\brs*{f(x_{k_s}) - f^*}^2}
  +\cO\brs*{\sqrt{\tsum_{s=1}^J\tsum_{k\in\cK_s} \alpha_k^2\pi_k^2\brr*{\brs{\cG_0/\cG_1} + \brs*{f(x_{k_s}) - f^*}}^2}}
  \\&\leq
  \cO\brs*{\sqrt{\tsum_{s=1}^J\tsum_{k\in\cK_s} \alpha_k^2\pi_k^2\brs*{f(x_{k_s}) - f^*}^2}
  +\brs{\cG_0/\cG_1}\sqrt{\tsum_{k=1}^K \alpha_k^2\pi_k^2}}
  \\&\at{uses the assumption in \cref{eq:alpha_T} and the upper bound on $\pi_k$ above}=
  \cO\brs*{\sqrt{\tsum_{s=1}^J\tsum_{k\in\cK_s} \alpha^2\pi_{k_s}^2\brs*{f(x_{k_s}) - f^*}^2}
  +\brs{\cG_0/\cG_1}\sqrt{\tsum_{k=1}^K \alpha_k^2\pi_k^2}}
  \\&\at{uses the properties of $\cK_s$}\leq
  \cO\brs*{\tsum_{s=1}^J \sqrt{T}\alpha\pi_{k_s}\brs{f(x_{k_s}) - f^*}
  +\brs{\cG_0/\cG_1}\sqrt{\tsum_{k=1}^K \alpha_k^2\pi_k^2}},
\end{align*}
where \annotate.
Furthermore, we can obtain the following inequality:
\begin{align*}
  \mi{1}\tsum_{s=1}^J \sqrt{T}\alpha\pi_{k_s}\brs*{f(x_{k_s}) - f^*}
  \at{use the properties of $\cK_s$}\leq
  \cO\brs*{\tsum_{s=1}^J\tsum_{k \in \cK_s}\tmfrac{\alpha\pi_{k_s}}{\sqrt{T}} \brs*{f(x_{k_s}) - f(x_k) + f(x_k) - f^*}}
  \\&\at{uses \Cref{lem:M01}, \Cref{rem:GM01}, and the upper bounds on $\norm{x_k - x_{k_s}}$ for $k \in \cK_s$ above}\leq
  \cO\brs*{\tsum_{s=1}^J\tsum_{k \in \cK_s}  \tmfrac{\alpha\pi_{k_s}}{\sqrt{T}}\brr*{\alpha T\cR\cG_0 + \brs{f(x_k) - f^*}}}
  \\&\leq
  \cO\brs*{
    \tsum_{s=1}^J\tsum_{k \in \cK_s}  \tmfrac{\alpha\pi_{k_s}}{\sqrt{T}}\brs{f(x_k) - f^*}
    +\sqrt{T}\cR\cG_0\tsum_{s=1}^J\tsum_{k \in \cK_s} \alpha^2\pi_{k_s}
  }
  \\&\at{uses the assumption in \cref{eq:alpha_T} and the definition in \cref{eq:fk}}\leq
  \cO\brs*{
    \tsum_{s=1}^J\tsum_{k \in \cK_s}  \tmfrac{\alpha_k\pi_k}{\sqrt{T}}\brs{f(x_k) - f^*}
    +\sqrt{T}\cR\cG_0\tsum_{s=1}^J\tsum_{k \in \cK_s} \alpha_k^2\pi_k
  }
  \\&\at{uses the properties of $\cK_s$}\leq
  \cO\brs*{\tmfrac{1}{\sqrt{T}}\tsum_{k=1}^K  \alpha_k\pi_k\brs{f(x_k) - f^*}
  +\sqrt{T}\cR\cG_0\tsum_{k=1}^K\alpha_k^2\pi_k},
\end{align*}
where \annotate.
Finally, we can upper-bound $\E*{\reg_K}$ as follows:
\begin{align*}
  \E*{\reg_K}
  &\at{uses \Cref{lem:regret}}\leq
  \cO\brs*{\cR\cG_0\sqrt{\tsum_{k=1}^K \alpha_k^2\pi_k^2}
  + \E*{\cR\cG_1\sqrt{\tsum_{k=1}^K \alpha_k^2\pi_k^2\brs*{f(x_k) - f^*}^2}}}
  \\&\at{use the inequalities above}\leq
  \cO\brs*{\cR\cG_0\sqrt{\tsum_{k=1}^K \alpha_k^2\pi_k^2}
  +\cR\cG_1\tsum_{s=1}^J \sqrt{T}\alpha_{k_s}\pi_{k_s}\E*{f(x_{k_s}) - f^*}}
  \\&\at{use the inequalities above}\leq
  \cO\brs*{\cR\cG_0\sqrt{\tsum_{k=1}^K \alpha_k^2\pi_k^2}
    +\sqrt{T}\cR^2\cG_0\cG_1\tsum_{k=1}^K\alpha_k^2\pi_k
  +\tmfrac{\cR\cG_1}{\sqrt{T}}\tsum_{k=1}^K  \alpha_k\pi_k\E*{f(x_k) - f^*}}
  \\&\at{uses the definition in \cref{eq:alpha_T}}\leq
  \tfrac{1}{2}\tsum_{k=1}^K  \alpha_k\pi_k\E*{f(x_k) - f^*}
  +\cO\brs*{\cR\cG_0\sqrt{\tsum_{k=1}^K \alpha_k^2\pi_k^2}
  +T\cR\cG_0\tsum_{k=1}^K\alpha_k^2\pi_k},
\end{align*}
where \annotate.\qed

\proofsubsection{thm:exp1}

Let $\Delta_k = \E{f(x_k) - f^*}$, and let $\delta_k$ be defined as follows:
\begin{align*}
  \delta_k = e\cF + \cO\brs*{\cR\cG_0\sqrt{\tsum_{i=1}^k \alpha_i^2\pi_i^2}
  +T\cR\cG_0\tsum_{i=1}^k\alpha_i^2\pi_i}.
\end{align*}
For $k \leq T-1$, we can upper-bound $\Delta_k$ as follows:
\begin{align*}
  \pi_k\Delta_k
  &\at{uses the definition in \cref{eq:MF} and the fact that $x_k \in Q_\cR$}\leq
  \pi_k\cF
  \at{uses the definitions in \cref{eq:fk,eq:alpha_K1}}\leq
  (1-1/T)^{1-T}\cF
  \leq
  e\cF
  \at{uses the definition of $\delta_k$}\leq
  \delta_k
  \leq
  \delta_k
  +\tfrac{1}{2}\tsum_{i=1}^k \alpha_i\pi_i \Delta_i,
\end{align*}
where \annotate.
For $k \geq T$, we can upper-bound $\Delta_k$ as follows:
\begin{align*}
  \pi_k\Delta_k
  &\at{uses \Cref{lem:wd,lem:regret_exp1}}\leq
  \Delta_0 +  \tfrac{1}{2}\tsum_{i=1}^k \alpha_i\pi_i \Delta_i
  +\cO\brs*{
    \cR\cG_0\mysqrt{\tsum_{i=1}^k \alpha_i^2\pi_i^2}
    +
    T\cR\cG_0\tsum_{i=1}^k\alpha_i^2\pi_i
  }
  \\&\at{uses the definition in \cref{eq:MF} and the definition of $\delta_k$}\leq
  \delta_k
  +\tfrac{1}{2}\tsum_{i=1}^k \alpha_i\pi_i \Delta_i,
\end{align*}
where \annotate. Hence, the conditions of \Cref{lem:tech} with $p=1/2$ are satisfied.
Further, we can upper-bound $\delta_k$ as follows:
\begin{align*}
  \delta_k
  &\at{uses the definitions in \cref{eq:alpha_K1,eq:fk}}=
  e\cF + \cO\brs*{\cR\cG_0\alpha\pi_k\sqrt{\tsum_{i=0}^{k-1} (1-\alpha)^{2i}}
  +T\cR\cG_0\alpha^2\pi_k\tsum_{i=0}^{k-1}(1-\alpha)^i}
  \\&\leq
  e\cF + \cO\brs*{\cR\cG_0\sqrt{\alpha}\pi_k
  +T\cR\cG_0\alpha\pi_k}
  \\&\at{uses the definition in \cref{eq:alpha_K1}}\leq
  e\cF + \cO\brs*{\epsilon\pi_k},
\end{align*}
where \annotate.
Finally, we can upper-bound $\Delta_K$ as follows:
\begin{align*}
  \Delta_K
  &\at{uses \Cref{lem:tech} with $p=1/2$}\leq
  \cO\brs*{\tmfrac{\delta_K}{\pi_K} + \tsum_{k=1}^K\tmfrac{\alpha_k\delta_k}{\sqrt{\pi_K\pi_{k-1}}}}
  \\&\at{uses the inequality above}\leq
  \cO\brs*{
    \tmfrac{\cF}{\pi_K} + \epsilon
    +
    \tsum_{k=1}^K\tmfrac{\alpha_k\cF}{\sqrt{\pi_K\pi_{k-1}}}
    +
    \tsum_{k=1}^K\tmfrac{\alpha_k\pi_k\epsilon}{\sqrt{\pi_K\pi_{k-1}}}
  }
  \\&\at{uses the definitions in \cref{eq:fk,eq:alpha_K1}}\leq
  \cO\brs*{
    \cF[1-\alpha_1]^K + \epsilon
    +
    \alpha_1\cF
    \tsum_{k=0}^{K-1}[1-\alpha_1]^{[K+k]/2}
    +
    \alpha_1\epsilon
    \tsum_{k=0}^{K-1}[1-\alpha_1]^{[K-k]/2-1}
  }
  \\&\leq
  \cO\brs*{
    \cF[1-\alpha_1]^{K/2}
    +
  \epsilon}
  \\&\at{uses the definition in \cref{eq:alpha_K1}}\leq
  \cO\brs{\epsilon},
\end{align*}
where \annotate.\qed

\proofsubsection{lem:regret_exp2}

We define $J \in \N$, $k_s \in \N$ and $\cK_s \subset \N$ for $s \in \brf{1,\dots,J}$ similarly to the proof of \Cref{lem:regret_exp1} in \Cref{proof:lem:regret_exp1}. Consequently, it holds that $\pi_k \leq \cO\brs{\pi_{k_s}}$. Furthermore, for $k \in \cK_s$, using the fact that $\alpha_k \leq \alpha_{k_s}$ due to the definition in \cref{eq:alpha2}, we can upper-bound $\norm{x_k - x_{k_s}}$ as follows:
\begin{align*}
  \norm{x_k - x_{k_s}} \leq \cO\brs{\alpha_{k_s}T\cR} \leq \cO\brs{1/\cG_1}.
\end{align*}
Next, following the proof of \Cref{lem:regret_exp1} in \Cref{proof:lem:regret_exp1}, we can obtain the following inequality:
\begin{align*}
  \sqrt{\tsum_{k=1}^K \alpha_k^2\pi_k^2\brs*{f(x_k) - f^*}^2}
  &\leq
  \cO\brs*{\sqrt{\tsum_{s=1}^J\tsum_{k\in\cK_s} \alpha_k^2\pi_k^2\brs*{f(x_{k_s}) - f^*}^2}
  +\brs{\cG_0/\cG_1}\sqrt{\tsum_{k=1}^K \alpha_k^2\pi_k^2}}
  \\&\at{uses the fact that $\alpha_k \leq \alpha_{k_s}$ and $\pi_k \leq \cO\brs{\pi_{k_s}}$, and the properties of $\cK_s$}\leq
  \cO\brs*{\tsum_{s=1}^J\sqrt{T}\alpha_{k_s}\pi_{k_s}\brs*{f(x_{k_s}) - f^*}
  +\brs{\cG_0/\cG_1}\sqrt{\tsum_{k=1}^K \alpha_k^2\pi_k^2}},
\end{align*}
where \annotate.
Furthermore, from the definition in \cref{eq:alpha2} it follows that for $s \leq J-1$ we have $\alpha_k = \alpha_{k_s}$ for all $k \in \cK_s$, which implies $\alpha_{k_s} = \sum_{k \in \cK_s} \brs{\alpha_k /T}$. In addition, for $s = J$, we have
\begin{align*}
  \alpha_{k_J} = \tsum_{k = (J-1)T+1}^{JT} \brs{\alpha_k /T} \leq \tsum_{k \in \cK_J}\brs{\alpha_k /T}.
\end{align*}
Using similar arguments, we can show that
\begin{align*}
  \tsum_{k \in \cK_s} \alpha_{k_s}^2 \leq \cO\brs*{\tsum_{k \in \cK_s} \alpha_{k}^2}
  \quad\text{for all}\;\;
  s \in \brf{1,\dots,J}.
\end{align*}
Hence, we can obtain the following inequality:
\begin{align*}
  \mi{1}\tsum_{s=1}^J \sqrt{T}\alpha_{k_s}\pi_{k_s}\brs*{f(x_{k_s}) - f^*}
  \leq
  \cO\brs*{\tsum_{s=1}^J\tsum_{k \in \cK_s}\tmfrac{\alpha_k\pi_{k_s}}{\sqrt{T}} \brs*{f(x_{k_s}) - f(x_k) + f(x_k) - f^*}}
  \\&\at{uses \Cref{lem:M01}, \Cref{rem:GM01}, and the upper bounds on $\norm{x_k - x_{k_s}}$ for $k \in \cK_s$ above}\leq
  \cO\brs*{\tsum_{s=1}^J\tsum_{k \in \cK_s}\tmfrac{\alpha_k\pi_{k_s}}{\sqrt{T}} \brs*{\alpha_{k_s}T\cR\cG_0 + \brs{f(x_k) - f^*}}}
  \\&\leq
  \cO\brs*{
    \tsum_{s=1}^J\tsum_{k \in \cK_s}\tmfrac{\alpha_k\pi_{k_s}}{\sqrt{T}} \brs{f(x_k) - f^*}
    +
    \sqrt{T}\cR\cG_0\tsum_{s=1}^J\tsum_{k \in \cK_s}\alpha_k\alpha_{k_s}\pi_{k_s}
  }
  \\&\leq
  \cO\brs*{
    \tsum_{s=1}^J\tsum_{k \in \cK_s}\tmfrac{\alpha_k\pi_{k_s}}{\sqrt{T}} \brs{f(x_k) - f^*}
    +
    \sqrt{T}\cR\cG_0\tsum_{s=1}^J\tsum_{k \in \cK_s}\brs{\alpha_{k_s}^2 + \alpha_k^2}\pi_{k_s}
  }
  \\&\at{uses the previously obtained inequalities}\leq
  \cO\brs*{
    \tsum_{s=1}^J\tsum_{k \in \cK_s}\tmfrac{\alpha_k\pi_{k_s}}{\sqrt{T}} \brs{f(x_k) - f^*}
    +
    \sqrt{T}\cR\cG_0\tsum_{s=1}^J\tsum_{k \in \cK_s}\alpha_k^2\pi_{k_s}
  }
  \\&\at{uses the definition in \cref{eq:fk} and the properties of $\cK_s$}\leq
  \cO\brs*{
    \tsum_{k=1}^K\tmfrac{\alpha_k\pi_k}{\sqrt{T}} \brs{f(x_k) - f^*}
    +
    \sqrt{T}\cR\cG_0\tsum_{k=1}^K\alpha_k^2\pi_k
  },
\end{align*}
where \annotate.
Finally, we can upper-bound $\E*{\reg_K}$ as follows:
\begin{align*}
  \mi{3}\E*{\reg_K}
  \at{uses \Cref{lem:regret}}\leq
  \cO\brs*{\cR\cG_0\sqrt{\tsum_{k=1}^K \alpha_k^2\pi_k^2}
  + \E*{\cR\cG_1\sqrt{\tsum_{k=1}^K \alpha_k^2\pi_k^2\brs*{f(x_k) - f^*}^2}}}
  \\&\at{use the inequalities above}\leq
  \cO\brs*{\cR\cG_0\sqrt{\tsum_{k=1}^K \alpha_k^2\pi_k^2}
  +\cR\cG_1\tsum_{s=1}^J \sqrt{T}\alpha_{k_s}\pi_{k_s}\E*{f(x_{k_s}) - f^*}}
  \\&\at{use the inequalities above}\leq
  \cO\brs*{\cR\cG_0\sqrt{\tsum_{k=1}^K \alpha_k^2\pi_k^2}
    +\sqrt{T}\cR^2\cG_0\cG_1\tsum_{k=1}^K\alpha_k^2\pi_k
  +\tmfrac{\cR\cG_1}{\sqrt{T}}\tsum_{k=1}^K  \alpha_k\pi_k\E*{f(x_k) - f^*}}
  \\&\at{uses the definition in \cref{eq:alpha_T}}\leq
  \tfrac{1}{2}\tsum_{k=1}^K  \alpha_k\pi_k\E*{f(x_k) - f^*}
  +\cO\brs*{\cR\cG_0\sqrt{\tsum_{k=1}^K \alpha_k^2\pi_k^2}
  +T\cR\cG_0\tsum_{k=1}^K\alpha_k^2\pi_k},
\end{align*}
where \annotate.\qed

\proofsubsection{thm:exp2}

First, observe that the statement of \Cref{lem:regret_exp2} holds for arbitrary $K \geq T$.
Next, we define $\Delta_k$ and $\delta_k$ similarly to the proof of \Cref{thm:exp1} in \Cref{proof:thm:exp1}. Consequently, using \Cref{lem:regret_exp2,lem:wd} and using similar arguments to \Cref{proof:thm:exp1}, we can show that the following inequality holds for all $k \in \N$:
\begin{align*}
  \pi_k \Delta_k \leq \delta_k
  +\tfrac{1}{2}\tsum_{i=1}^k \alpha_i\pi_i \Delta_i.
\end{align*}
Hence, the conditions of \Cref{lem:tech} with $p=1/2$ are satisfied.
Furthermore, for $k \leq S T$, we can upper-bound $\delta_k$ as follows:
\begin{align*}
  \delta_k
  &\at{uses the definitions in \cref{eq:alpha2,eq:fk}}=
  e\cF + \cO\brs*{\cR\cG_0\alpha_1\pi_k\sqrt{\tsum_{i=0}^{k-1} (1-\alpha_1)^{2i}}
  +T\cR\cG_0\alpha_1^2\pi_k\tsum_{i=0}^{k-1}(1-\alpha_1)^i}
  \\&\leq
  e\cF + \cO\brs*{\cR\cG_0\sqrt{\alpha_1}\pi_k
  +T\cR\cG_0\alpha_1\pi_k}
  \\&\at{uses the definition in \cref{eq:alpha2}}\leq
  e\cF + \cO\brs*{\brs{\cG_0/\cG_1}\pi_k},
\end{align*}
where \annotate.
For $k \geq ST+1$, we can upper-bound $\delta_k$ as follows:
\begin{align*}
  \delta_k
  &\at{uses the definitions in \cref{eq:alpha2,eq:fk}}=
  \delta_{ST} + \cO\brs*{\cR\cG_0\alpha_{ST+1}\pi_k\sqrt{\tsum_{i=0}^{\infty} (1-\alpha_{ST+1})^{2i}}
  +T\cR\cG_0\alpha_{ST+1}^2\pi_k\tsum_{i=0}^{\infty}(1-\alpha_{ST+1})^i}
  \\&\leq
  \delta_{ST} + \cO\brs*{\cR\cG_0\sqrt{\alpha_{ST+1}}\pi_k
  +T\cR\cG_0\alpha_{ST+1}\pi_k}
  \\&\at{uses the definition in \cref{eq:alpha2}}\leq
  \delta_{ST} + \cO\brs*{\epsilon \pi_k},
\end{align*}
where \annotate.
Finally, we can upper-bound $\Delta_K$ as follows:
\begin{align*}
  \Delta_K
  &\at{uses \Cref{lem:tech} with $p=1/2$}\leq
  \cO\brs*{
    \tmfrac{\delta_K}{\pi_K} + \tsum_{k=1}^K\tmfrac{\alpha_k\delta_k}{\sqrt{\pi_K\pi_{k-1}}}
  }
  \\&\at{uses the fact that $K \geq TS+1$}=
  \cO\brs*{
    \tmfrac{\delta_K}{\pi_K}
    +
    \tsum_{k=1}^{ST}\tmfrac{\alpha_k\delta_k}{\sqrt{\pi_K\pi_{k-1}}}
    +
    \tsum_{k=ST+1}^{K}\tmfrac{\alpha_k\delta_k}{\sqrt{\pi_K\pi_{k-1}}}
  }
  \\&\at{uses the inequalities above}\leq
  \cO\brs*{
    \tmfrac{\cF}{\pi_K} + \tmfrac{\cG_0\pi_{ST}}{\cG_1\pi_K} + \epsilon
    +
    \tsum_{k=1}^{ST}\brs*{
      \tmfrac{\alpha_k\cF}{\sqrt{\pi_K\pi_{k-1}}}
      +
      \tmfrac{\cG_0\alpha_k\pi_k}{\cG_1\sqrt{\pi_K\pi_{k-1}}}
    }
  }
  \\&
  +\cO\brs*{
    \tsum_{k=ST+1}^{K}\brs*{
      \tmfrac{\alpha_k\cF}{\sqrt{\pi_K\pi_{k-1}}}
      +
      \tmfrac{\cG_0\alpha_k\pi_{ST}}{\cG_1\sqrt{\pi_K\pi_{k-1}}}
      +
      \tmfrac{\epsilon\alpha_k\pi_k}{\sqrt{\pi_K\pi_{k-1}}}
    }
  }
  \\&\at{use the definitions in \cref{eq:fk,eq:alpha2}}\leq
  \cO\brs*{
    \tmfrac{\cF}{\pi_K} + \tmfrac{\cG_0\pi_{ST}}{\cG_1\pi_K} + \epsilon
    +
    \tmfrac{\cF\alpha_1}{\sqrt{\pi_K}}\tsum_{k=0}^{ST-1}\brs{1-\alpha_1}^{k/2}
    +
    \tmfrac{\cG_0\alpha_1}{\cG_1\sqrt{\pi_K}}\tsum_{k=0}^{ST-1}\brs{1-\alpha_1}^{-k/2-1}
  }
  \\&
  +
  \cO\brs*{
    \brs*{
      \tmfrac{\alpha_{ST+1}\cF}{\sqrt{\pi_K\pi_{ST}}}
      +
      \tmfrac{\cG_0\alpha_{ST+1}\sqrt{\pi_{ST}}}{\cG_1\sqrt{\pi_K}}
    }
    \tsum_{k=0}^{K-ST-1}[1-\alpha_{ST+1}]^{k/2}
  }
  \\&
  +
  \cO\brs*{
    \tmfrac{\epsilon\alpha_{ST+1}\sqrt{\pi_{ST}}}{2\sqrt{\pi_K}}
    \tsum_{k=0}^{K-ST-1}
    [1-\alpha_{ST+1}]^{-k/2-1}
  }
  \\&\at{use the definitions in \cref{eq:fk,eq:alpha2}}\leq
  \cO\brs*{
    \cF[1-\alpha_1]^{ST/2}[1-\alpha_{ST+1}]^{[K-ST]/2}
    +
    \brs{\cG_0/\cG_1}[1-\alpha_{ST+1}]^{[K-ST]/2}
    +
    \epsilon
  }
  \\&\at{uses the definitions in \cref{eq:KS2}}\leq
  \cO\brs*{\epsilon},
\end{align*}
where \annotate.\qed

\newpage

\section[Proofs for \crtCref{sec:lower}]{Proofs for \Cref{sec:lower}}

\proofsubsection{thm:lower_SGD}

In this proof, we consider one-dimensional example functions $f(x)$. The construction is inspired by the counterexamples for the generalized smooth optimization setting \citep{crawshaw2022robustness,zhang2019gradient}. We choose deterministic gradient oracle $\nabla_\xi f(x) = \nabla f(x)$, where $\nabla f(x) \in \partial f(x)$ is an arbitrary subgradient.
We further consider the following two cases:
\begin{align*}
  \text{Case I:}\quad
  \eta > \tmfrac{2\cR}{\cG_0}\exp\brr{-\tfrac{1}{4}\cR\cG_1}
  ,\qquad
  \text{Case II:}\quad
  \eta \leq  \tmfrac{2\cR}{\cG_0}\exp\brr{-\tfrac{1}{4}\cR\cG_1}.
\end{align*}

\textbf{Case I.}
Consider the following example:
\begin{align*}
  f(x) =
  \begin{cases}
    \tmfrac{\cG_0}{\cG_1}\brs{\exp\brr{\cG_1\abs{x - \frac{3}{4}\cR}} - 1}
    & x \geq \frac{1}{2}\cR\\[0.5em]
    \cG_0\exp\brr{\frac{1}{4}\cR\cG_1}\brs{\frac{1}{2}\cR - x} +  \tmfrac{\cG_0}{\cG_1}\brs{\exp\brr{\frac{1}{4}\cR\cG_1} - 1}
    & x < \frac{1}{2}\cR
  \end{cases}
\end{align*}
It is easy to verify that the gradient $\nabla f(x)$ satisfies the following:
\begin{align*}
  \nabla f(0) = \nabla f(-\cR) = -\cG_0\exp\brr{\tfrac{1}{4}\cR\cG_1},\quad
  \nabla f(\cR) = \cG_0\exp\brr{\tfrac{1}{4}\cR\cG_1}.
\end{align*}
Since $x_0 = \ox_0 = 0$, using the lower bound on the stepsize $\eta$ and \cref{eq:SGD}, for $K \in \N$, we get
\begin{align*}
  x_K = (-1)^{K+1}\cdot \cR,\quad \ox_K =
  \begin{cases}
    0 & K \equiv 0 \bmod 2\\
    \cR/(K+1) & K \equiv 1 \bmod 2
  \end{cases}.
\end{align*}
Hence, for all $K \in \N$, it is easy to verify that $\ox_K \leq \frac{1}{2}\cR$ and
\begin{align*}
  \min\brf{f(x_K), f(\ox_K)} - f^*
  &\geq
  \tmfrac{\cG_0}{\cG_1}\brs{\exp\brr{\tfrac{1}{4}\cR\cG_1} - 1}
  \\&\at{use the assumptions in \cref{eq:lower_SGD_params}}\geq
  \tmfrac{\cG_0}{\cG_1}\brs{\exp\brr{\tfrac{1}{8}\cR\cG_1 + 1} - 1}
  \at{use the assumptions in \cref{eq:lower_SGD_params}}>
  \tmfrac{\cG_0}{\cG_1}\exp\brr{\tfrac{1}{8}\cR\cG_1}
  \at{use the assumptions in \cref{eq:lower_SGD_params}}\geq
  \epsilon,
\end{align*}
where \annotate. This concludes Case~I.

\textbf{Case II.}
Let $r = \max\big\{0,\frac{1}{\cG_1}\ln \big[\frac{8\epsilon}{\cR\cG_0}\big]\big\}$, and consider the following example:
\begin{align*}
  f(x) =
  \begin{cases}
    \tmfrac{\cG_0}{\cG_1}\brs{\exp\brr{\cG_1\abs{x - \frac{1}{2}\cR}} - 1}
    & \abs{x - \frac{1}{2}\cR} \leq r\\[0.5em]
    \tmfrac{8\epsilon}{\cR}\brs{\abs{x - \frac{1}{2}\cR} - r} + \tmfrac{\cG_0}{\cG_1}\brs{\exp\brr{\cG_1r} - 1}
    & \abs{x - \frac{1}{2}\cR} > r
  \end{cases}.
\end{align*}
One can verify that $f(x)$ is convex and $\nabla f(x)$ satisfies \Cref{ass:G01}. Indeed, in the case $r = 0$, the convexity is obvious and $\frac{8\epsilon}{\cR} \leq \cG_0$. In the case $r > 0$, we can show that $\nabla f(\frac{1}{2}\cR \pm r) = \pm \frac{8\epsilon}{\cR}$, and hence, the function $f(x)$ is differentiable at $x = \frac{1}{2}\cR \pm r$. Furthermore, using the assumptions in \cref{eq:lower_SGD_params}, we can show that $r \leq \frac{1}{8}\cR$. Hence, the following inequality holds:
\begin{align*}
  f(\tfrac{1}{8}\cR)
  =
  \tmfrac{8\epsilon}{\cR}\brs*{\tfrac{3}{8}\cR - r} + \tmfrac{\cG_0}{\cG_1}\brs{\exp\brr{\cG_1r} - 1}
  \geq
  2\epsilon + \tmfrac{\cG_0}{\cG_1}\brs{\exp\brr{\cG_1r} - 1}
  \geq
  2\epsilon.
\end{align*}
Consequently, we can show by induction that $x_K,\ox_K \leq \eta \cdot \frac{8\epsilon}{\cR}\cdot K$ for $K \leq \frac{\cR^2}{64\epsilon\eta}$. Hence, we can show that $\min\brf{f(x_K), f(\ox_K)} - f^* \geq 2\epsilon$ for all $K$ satisfying $ K \leq \frac{\cR^2}{64\epsilon\eta}$. Hence, to reach the desired precision, at least the following number of iterations is necessary:
\begin{align*}
  K \geq \tmfrac{\cR^2}{64\epsilon\eta}
  \at{uses the upper bound on $\eta$}\geq
  \exp\brr{\tfrac{1}{4}\cR\cG_1}\cdot \tmfrac{\cR\cG_0}{128\epsilon},
\end{align*}
where \annotate. This concludes Case~II.\qed

\proofsubsection{thm:lower_AdaGrad}

This proof is similar in many ways to the proof of \Cref{thm:lower_SGD} in \Cref{proof:thm:lower_SGD}, but is more involved because of the more complex AdaGrad-Norm stepsizes. We consider one-dimensional example functions $f(x)$. We choose deterministic gradient oracle $\nabla_\xi f(x) = \nabla f(x)$, where $\nabla f(x) \in \partial f(x)$ is an arbitrary subgradient.
We further consider the following three cases:
\begin{align*}
  \text{Case I:}\;\;
  \eta \geq \cR\exp\brr{\tfrac{1}{32}\cR\cG_1}
  ,\quad
  \text{Case II:}\;\;
  \tfrac{1}{4}\cR \leq \eta <  \cR\exp\brr{\tfrac{1}{32}\cR\cG_1}
  ,\quad
  \text{Case III:}\;\;
  \eta <  \tfrac{1}{4}\cR.
\end{align*}

\textbf{Case I.}
We consider the example from Case~I in the proof of \Cref{thm:lower_SGD} in \Cref{proof:thm:lower_SGD}. One can verify that $x_K,\ox_K$ coincide with the definitions obtained in \Cref{proof:thm:lower_SGD}, Case~I. Indeed, we can obtain the following inequality:
\begin{align*}
  \norm{\eta_{K-1} \nabla f(x_{K-1})} = \eta/\mysqrt{K} \geq 2\cR,
\end{align*}
as long as $K$ satisfies the following inequality:
\begin{align*}
  K \leq \brs*{\tmfrac{\eta}{2\cR}}^2.
\end{align*}
Consequently, to reach the precision $ \min\brf{f(x_K), f(\ox_K)} - f^* \leq \epsilon$, the number of iterations $K$ must satisfy the following inequality:
\begin{align*}
  K > \brs*{\tmfrac{\eta}{2\cR}}^2
  \at{uses the lower bound on $\eta$}\geq
  \tfrac{1}{4}\exp\brr{\tfrac{1}{16}\cR\cG_1},
\end{align*}
where \annotate. This concludes Case~I.

\textbf{Case II.}
Let $r = \max\big\{0,\frac{1}{\cG_1}\ln \big[\frac{32\epsilon}{\cR\cG_0}\big]\big\}$, and consider the following example:
\begin{align*}
  f(x) =
  \begin{cases}
    \tmfrac{\cG_0}{\cG_1}\brs{\exp\brr{\cG_1\abs{x - \frac{1}{16}\cR}} - 1}
    & x \leq  \frac{1}{16}\cR + r\\[0.5em]
    \tmfrac{32\epsilon}{\cR}\brs{x - \frac{1}{16}\cR - r} + \tmfrac{\cG_0}{\cG_1}\brs{\exp\brr{\cG_1r} - 1}
    & x > \frac{1}{16}\cR + r
  \end{cases}.
\end{align*}
Similar to the proof of Case~II in \Cref{proof:thm:lower_SGD}, one can verify that $f(x)$ is convex and $\nabla f(x)$ satisfies \Cref{ass:G01}. Additionally, from the assumptions in \cref{eq:lower_AdaGrad_params}, it follows that $r \leq \frac{1}{32}\cR$. Hence, the following inequality holds:
\begin{align*}
  f(\tfrac{1}{8}\cR)
  \geq
  \tmfrac{32\epsilon}{\cR}\brs{\tfrac{1}{16}\cR  - r} + \tmfrac{\cG_0}{\cG_1}\brs{\exp\brr{\cG_1r} - 1}
  =
  \epsilon +  \tmfrac{32\epsilon}{\cR}\brs{\tfrac{1}{32}\cR  - r} + \tmfrac{\cG_0}{\cG_1}\brs{\exp\brr{\cG_1r} - 1}
  >
  \epsilon.
\end{align*}
In addition, the following inequality holds:
\begin{align*}
  f(0)
  =
  \tmfrac{\cG_0}{\cG_1}\brs{\exp\brr{\tfrac{1}{16}\cR\cG_1} - 1}
  \at{use the assumptions in \cref{eq:lower_AdaGrad_params}}\geq
  \tmfrac{\cG_0}{\cG_1}\brs{\exp\brr{\tfrac{1}{32}\cR\cG_1 + 1} - 1}
  \at{use the assumptions in \cref{eq:lower_AdaGrad_params}}>
  \tmfrac{\cG_0}{\cG_1}\exp\brr{\tfrac{1}{32}\cR\cG_1}
  \at{use the assumptions in \cref{eq:lower_AdaGrad_params}}\geq
  \epsilon,
\end{align*}
where \annotate.
Furthermore, from the lower bound on $\eta$ it follows that $x_1 \geq \frac{1}{4}\cR$, and from the upper bound on $\eta$ it follows that
\begin{align*}
  x_K
  \geq
  \tfrac{1}{4}\cR - \tmfrac{\eta}{\norm{\nabla f(0)}} \cdot \tmfrac{32\epsilon}{\cR}\cdot (K-1)
  \geq
  \tfrac{1}{4}\cR - \tmfrac{32\epsilon}{\cG_0\exp\brr{\tfrac{1}{32}\cR\cG_1}} \cdot (K-1)
  \geq
  \tfrac{1}{8}\cR,
\end{align*}
and $\ox_K \geq \frac{1}{8}\cR$ for $K \in \N$,
as long as $K$ satisfies the following inequality:
\begin{align*}
  K \leq 1 + \exp\brr{\tfrac{1}{32}\cR\cG_1} \cdot \tmfrac{\cR\cG_0}{256\epsilon}.
\end{align*}
Hence, to reach the precision $\min\brf{f(x_K), f(\ox_K)} - f^* \leq \epsilon$, it is necessary that
\begin{align*}
  K
  \geq
  \exp\brr{\tfrac{1}{32}\cR\cG_1} \cdot \tmfrac{\cR\cG_0}{256\epsilon}
  \at{uses the assumptions in \cref{eq:lower_AdaGrad_params}}\geq
  \tmfrac{1}{8}\exp\brr{\tfrac{1}{64}\cR\cG_1},
\end{align*}
where \annotate.
This concludes Case~II.

\textbf{Case III.}
Let $r = \max\big\{0,\frac{1}{\cG_1}\ln \big[\frac{32\epsilon}{\cR\cG_0}\big]\big\}$, $m = \max\brf{\cG_0,\frac{32\epsilon}{\cR}}$, and consider the following example:
\begin{align*}
  f(x) =
  \begin{cases}
    0
    & x \geq \cR\\[0.5em]
    \tmfrac{\cG_0}{\cG_1}\brs{\exp\brr{\cG_1\brs{\cR - x}} - 1}
    &   x \in [\cR - r,\cR) \\[0.5em]
    \tmfrac{32\epsilon}{\cR}\brs{\cR - r - x} + \tmfrac{\cG_0}{\cG_1}\brs{\exp\brr{\cG_1r} - 1}
    & x \in [\frac{1}{4}\cR, \cR - r) \\[0.5em]
    \tmfrac{m}{\cG_1}\brs{\exp\brr{\cG_1\brs{\frac{1}{4}\cR - x}} - 1}
    +\tmfrac{32\epsilon}{\cR}\brs{\frac{3}{4}\cR - r} + \tmfrac{\cG_0}{\cG_1}\brs{\exp\brr{\cG_1r} - 1}
    & x < \frac{1}{4}\cR
  \end{cases}.
\end{align*}
One can verify that $f(x)$ is convex, $\nabla f(x)$ satisfies \Cref{ass:G01}, and $r \leq \frac{1}{32}\cR$. Moreover, from the upper bound on $\eta$ it follows that there exists $\hK \in \N$ such that $x_{\hK} \in (\frac{1}{4}\cR,\frac{1}{2}\cR]$, and $\ox_K \leq x_{\hK}$ for all $K \leq \hK$.
Moreover, we can upper-bound $x_K$ as follows:
\begin{align*}
  x_K
  &\leq
  \tfrac{1}{2}\cR + \tmfrac{\eta}{\norm{\nabla f(0)}} \cdot \tmfrac{32\epsilon}{\cR}\cdot (K - \hK)
  \\&\leq
  \tfrac{1}{2}\cR + \tmfrac{\cR}{4m\exp\brr{\tfrac{1}{4}\cR\cG_1}} \cdot \tmfrac{32\epsilon}{\cR}\cdot (K - \hK)
  \\&\leq
  \tfrac{1}{2}\cR +  \tmfrac{\cR}{4\cG_0\exp\brr{\tfrac{1}{4}\cR\cG_1}} \cdot \tmfrac{32\epsilon}{\cR}\cdot (K - \hK)
  \\&=
  \tfrac{1}{2}\cR +  \tmfrac{8\epsilon}{\cG_0\exp\brr{\tfrac{1}{4}\cR\cG_1}}\cdot (K - \hK)
  \leq
  \tfrac{3}{4}\cR,
\end{align*}
as long as $K$ satisfies the following inequality:
\begin{align*}
  K \leq \hK + \exp\brr{\tfrac{1}{4}\cR\cG_1} \cdot \tmfrac{\cR\cG_0}{32\epsilon},
\end{align*}
Consequently, for such $K \in \N$, we have
\begin{align*}
  \min\brf{f(x_K), f(\ox_K)} - f^*
  &\geq
  f(\tfrac{3}{4}\cR) - f^*
  \geq
  \tmfrac{32\epsilon}{\cR}\brs{\cR - r - \tfrac{3}{4}\cR} + \tmfrac{\cG_0}{\cG_1}\brs{\exp\brr{\cG_1r} - 1}
  \geq
  4\epsilon.
\end{align*}
Hence, to reach the precision $\min\brf{f(x_K), f(\ox_K)} - f^* \leq \epsilon$, at least the following number of iterations is necessary:
\begin{align*}
  K
  \geq
  \hK + \exp\brr{\tfrac{1}{4}\cR\cG_1} \cdot \tmfrac{\cR\cG_0}{32\epsilon}
  \at{uses the assumptions in \cref{eq:lower_AdaGrad_params}}\geq
  \exp\brr{\tfrac{1}{8}\cR\cG_1},
\end{align*}
where \annotate.
This concludes Case~III.\qed

\newpage

\section[Proofs for \crtCref{sec:uni}]{Proofs for \Cref{sec:uni}}

\proofsubsection{lem:SL01}

We can upper-bound $\norm{\nabla h(x)}$ as follows:
\begin{align*}
  \norm{\nabla h(x)}
  &\at{use the definitions in \cref{eq:h}}=
  \tmfrac{1}{(1+\nu)\brs{h(x)}^\nu}\norm{\nabla f(x)}
  \\&\at{uses \Cref{ass:SL01}, Jensen's inequality, and the triangle inequality}\leq
  \tmfrac{1}{(1+\nu)\brs{h(x)}^\nu}\E*[\xi\sim\cD]{\norm{u_\xi(x)} + \norm{v_\xi(x)}}
  \\&\at{uses \Cref{ass:SL01} and Jensen's inequality}\leq
  \tmfrac{1}{(1+\nu)\brs{h(x)}^\nu}
  \brs*{
    \sqrt{\sigma^2 + \cL_0^{\frac{2}{1+\nu}}\brs{f(x) - f^*}^{\frac{2\nu}{1+\nu}}}
    +\cL_1\brs{f(x)- f(x^*)}
  }
  \\&\at{use the definitions in \cref{eq:h}}=
  \tmfrac{1}{(1+\nu)\brs{h(x)}^\nu}
  \brs*{
    \sqrt{\sigma^2 + \cL_0^{\frac{2}{1+\nu}}\brs{h(x)^{1+\nu} - \beta}^{\frac{2\nu}{1+\nu}}}
    +\cL_1\brs{h(x)^{1+\nu} - \beta}
  }
  \\&\leq
  \tmfrac{1}{(1+\nu)\brs{h(x)}^\nu}
  \brs*{
    \sigma + \cL_0^{\frac{1}{1+\nu}}
    \brs{h(x)}^\nu
    -\cL_1 \beta
  }
  +\tfrac{1}{1+\nu}\cL_1 h(x)
  \\&\at{use the definitions in \cref{eq:h}}\leq
  \tmfrac{1}{(1+\nu)\brs{h(x)}^\nu}
  \brs*{
    \max\brf*{\cL_1\beta, \cL_0^{\frac{1}{1+\nu}}\beta^{\frac{\nu}{1+\nu}}} + \cL_0^{\frac{1}{1+\nu}}
    \brs{h(x)}^\nu
    -\cL_1 \beta
  }
  +\tfrac{1}{1+\nu}\cL_1 h(x)
  \\&\at{use the definitions in \cref{eq:h}}\leq
  \tmfrac{1}{(1+\nu)\brs{h(x)}^\nu}
  \brs*{
    \max\brf*{\cL_1\beta ,\cL_0^{\frac{1}{1+\nu}}\brs{h(x)}^\nu} + \cL_0^{\frac{1}{1+\nu}}
    \brs{h(x)}^\nu
    -\cL_1 \beta
  }
  +\tfrac{1}{1+\nu}\cL_1 h(x)
  \\&\leq
  \tfrac{2}{(1+\nu)}
  \cL_0^{\frac{1}{1+\nu}}
  +\tfrac{1}{1+\nu}\cL_1 h(x),
\end{align*}
where \annotate.
Next, we can upper-bound $\abs{h(x') - h(x)}^{1+\nu}$ as follows:
\begin{align*}
  \abs{h(x') - h(x)}^{1+\nu}
  &\at{uses the inequality above and similar arguments to the proof of \Cref{lem:M01} in \Cref{proof:lem:M01}}\leq
  \cO\brs*{\brr*{\cL_0^{\smash[t]{1/(1+\nu)}} + \cL_1 h(x)}^{1+\nu}\exp\brr{\cL_1\norm{x'-x}}\norm{x'-x}^{1+\nu}}
  \\&\leq
  \cO\brs*{\brr*{\cL_0 + \brs{\cL_1 h(x)}^{1+\nu}}\exp\brr{\cL_1\norm{x'-x}}\norm{x'-x}^{1+\nu}},
\end{align*}
where \annotate.\qed

\proofsubsection{lem:regret_uni}

We can upper-bound $\E{\reg_K}$ as follows:
\begin{align*}
  \E{\reg_K}
  &\at{uses \Cref{ass:regret} and the definition in \cref{eq:regret}}\leq
  \E*{\cC\cR\sqrt{\tsum_{k=1}^K \sqn{\nabla_{\xi_k} f_k(z_k)}}}
  \\&\at{uses \cref{eq:wd} and the definition in \cref{eq:fk}}=
  \E*{\cC\cR\sqrt{\tsum_{k=1}^K \alpha_k^2\pi_k^2\sqn{\nabla_{\xi_k} f(x_k)}}}
  \\&\at{use \Cref{ass:SL01}}\leq
  \E*{\cC\cR\sqrt{\tsum_{k=1}^K \alpha_k^2\pi_k^2\sqn{u_{\xi_k}(x_k) + v_{\xi_k}(x_k)}}}
  \\&\leq
  \E*{\cC\cR\sqrt{\tsum_{k=1}^K \alpha_k^2\pi_k^2\sqn{u_{\xi_k}(x_k)}}
  + \cC\cR\sqrt{\tsum_{k=1}^K \alpha_k^2\pi_k^2\sqn{v_{\xi_k}(x_k)}}}
  \\&\at{use Jensen's inequality}\leq
  \cC\cR\sqrt{\tsum_{k=1}^K \alpha_k^2\pi_k^2\E*{\sqn{u_{\xi_k}(x_k)}}}
  + \E*{\cC\cR\sqrt{\tsum_{k=1}^K \alpha_k^2\pi_k^2\sqn{v_{\xi_k}(x_k)}}}
  \\&\at{use \Cref{ass:SL01}}\leq
  \cC\cR\sigma\sqrt{\tsum_{k=1}^K \alpha_k^2\pi_k^2}
  +\cC\cR\cL_0^{\frac{1}{1+\nu}}\sqrt{\E*{\tsum_{k=1}^K \alpha_k^2\pi_k^2\brs{f(x_k) - f^*}^{\frac{2\nu}{1+\nu}}}}
  \\&
  +\E*{\cC\cR\cL_1\sqrt{\tsum_{k=1}^K \alpha_k^2\pi_k^2\brs*{f(x_k) - f^*}^2}}
  \\&\at{use Jensen's inequality}\leq
  \cC\cR\sigma\sqrt{\tsum_{k=1}^K \alpha_k^2\pi_k^2}
  +\cC\cR\cL_0^{\frac{1}{1+\nu}}\sqrt{\tsum_{k=1}^K \brs{\alpha_k\pi_k}^{\frac{2}{1+\nu}}\brs{\alpha_k\pi_k\E*{f(x_k) - f^*}}^{\frac{2\nu}{1+\nu}}}
  \\&
  +\E*{\cC\cR\cL_1\sqrt{\tsum_{k=1}^K \alpha_k^2\pi_k^2\brs*{f(x_k) - f^*}^2}}
  \\&\at{uses H\"older's inequality}\leq
  \cC\cR\sigma\sqrt{\tsum_{k=1}^K \alpha_k^2\pi_k^2}
  +\E*{\cC\cR\cL_1\sqrt{\tsum_{k=1}^K \alpha_k^2\pi_k^2\brs*{f(x_k) - f^*}^2}}
  \\&
  +\cC\cR\cL_0^{\frac{1}{1+\nu}}
  \brs*{\tsum_{k=1}^K [\alpha_k\pi_k]^{\frac{2}{1-\nu}}}^{\frac{1-\nu}{2(1+\nu)}}\brs*{\tsum_{k=1}^K \alpha_k\pi_k\E*{f(x_k) - f^*}}^{\frac{\nu}{1+\nu}
  }
  \\&\at{uses Young's inequality}\leq
  \cO\brs*{
    \cR\sigma\sqrt{\tsum_{k=1}^K \alpha_k^2\pi_k^2}
    +\cL_0\cR^{1+\nu}
    \brs*{\tsum_{k=1}^K [\alpha_k\pi_k]^{\frac{2}{1-\nu}}}^{\frac{1-\nu}{2}}
  }
  \\&
  +\tfrac{1}{4}\tsum_{k=1}^K \alpha_k\pi_k\E*{f(x_k) - f^*}
  +\cO\brs*{\E*{\cR\cL_1\sqrt{\tsum_{k=1}^K \alpha_k^2\pi_k^2\brs*{f(x_k) - f^*}^2}}},
\end{align*}
where \annotate.

Next, we define $J \in \N$, $k_s \in \N$ and $\cK_s \subset \N$ for $s \in \brf{1,\dots,J}$ similarly to the proof of \Cref{lem:regret_exp1} in \Cref{proof:lem:regret_exp1}. Consequently, it holds that $\pi_k \leq \cO\brs{\pi_{k_s}}$. Furthermore, for $k \in \cK_s$, using the fact that $\alpha_k \leq \alpha_{k_s}$ due to the definition in \cref{eq:alpha_uni}, we can upper-bound $\norm{x_k - x_{k_s}}$ as follows:
\begin{align*}
  \norm{x_k - x_{k_s}} \leq \cO\brs{\alpha_{k_s}T\cR} \leq \cO\brs{1/\cL_1}.
\end{align*}
Furthermore, using similar arguments to the proof of \Cref{lem:regret_exp2} in \Cref{proof:lem:regret_exp2}, we get the following:
\begin{align*}
  \tsum_{k \in \cK_s} \alpha_{k_s}^q \leq \cO\brs*{\tsum_{k \in \cK_s} \alpha_k^q}
  \quad\text{for all}\;\; q > 0 \text{\;and\;} s \in\brf{1,\dots,J}.
\end{align*}
Next, we can obtain the following inequality:
\begin{align*}
  \mi{0}\sqrt{\tsum_{k=1}^K \alpha_k^2\pi_k^2\brs*{f(x_k) - f^*}^2}
  =
  \sqrt{\tsum_{s=1}^J\tsum_{k\in\cK_s} \alpha_k^2\pi_k^2\brs*{f(x_k) - f^*}^2}
  \\&\leq
  \sqrt{\tsum_{s=1}^J\tsum_{k\in\cK_s} \alpha_k^2\pi_k^2\brs*{f(x_{k_s}) - f^*}^2}
  +\sqrt{\tsum_{s=1}^J\tsum_{k\in\cK_s} \alpha_k^2\pi_k^2\brs*{f(x_k) - f(x_{k_s})}^2}
  \\&\at{uses the definition in \cref{eq:h}}\leq
  \sqrt{\tsum_{s=1}^J\tsum_{k\in\cK_s} \alpha_k^2\pi_k^2\brs*{h(x_{k_s})}^{2(1+\nu)}}
  +\sqrt{\tsum_{s=1}^J\tsum_{k\in\cK_s} \alpha_k^2\pi_k^2\brs*{[h(x_k)]^{1+\nu} - [h(x_{k_s})]^{1+\nu}}^2}
  \\&\leq
  \sqrt{\tsum_{s=1}^J\tsum_{k\in\cK_s} \alpha_k^2\pi_k^2\brs*{h(x_{k_s})}^{2(1+\nu)}}
  \\&
  +\cO\brs*{
    \sqrt{\tsum_{s=1}^J\tsum_{k\in\cK_s} \alpha_k^2\pi_k^2\brs*{
        h(x_{k_s}) + \abs{h(x_k) - h(x_{k_s})}
    }^{2\nu}\brs*{h(x_k) - h(x_{k_s})}^2}
  }
  \\&\at{uses Young's inequality}\leq
  \cO\brs*{
    \sqrt{\tsum_{s=1}^J\tsum_{k\in\cK_s} \alpha_k^2\pi_k^2\brs*{h(x_{k_s})}^{2(1+\nu)}}
    +
    \sqrt{\tsum_{s=1}^J\tsum_{k\in\cK_s} \alpha_k^2\pi_k^2
      \brs*{h(x_k) - h(x_{k_s})}^{2(1+\nu)}
    }
  }
  \\&\at{uses \Cref{lem:SL01} and the upper-bound on $\norm{x_k - x_{k_s}}$ above}\leq
  \cO\brs*{\sqrt{\tsum_{s=1}^J\tsum_{k\in\cK_s} \alpha_k^2\pi_k^2\brs*{h(x_{k_s})}^{2(1+\nu)}}}
  \\&
  +\cO\brs*{\sqrt{\tsum_{s=1}^J\tsum_{k\in\cK_s} \alpha_k^2\pi_k^2\norm{x_k-x_{k_s}}^{2(1+\nu)}
      \brr*{\cL_0 + \brs{\cL_1 h(x_{k_s})}^{1+\nu}}^2
  }}
  \\&\at{uses the upper bounds on $\pi_k$ and $\norm{x_k - x_{k_s}}$}\leq
  \cO\brs*{\sqrt{\tsum_{s=1}^J\tsum_{k\in\cK_s} \alpha_k^2\pi_k^2\brs*{h(x_{k_s})}^{2(1+\nu)}}
    +\brs{T\cR}^{1+\nu}\cL_0 \sqrt{\tsum_{s=1}^J\tsum_{k\in\cK_s} \alpha_{k_s}^{2(1+\nu)}\alpha_k^2\pi_{k_s}^2}
  }
  \\&\at{uses the inequalities $\pi_k \leq \cO\brs{\pi_{k_s}}$ and $\alpha_k \leq \alpha_{k_s}$}\leq
  \cO\brs*{\sqrt{\tsum_{s=1}^J\tsum_{k\in\cK_s} \alpha_{k_s}^2\pi_{k_s}^2\brs*{h(x_{k_s})}^{2(1+\nu)}}
    +\brs{T\cR}^{1+\nu}\cL_0 \sqrt{\tsum_{s=1}^J\tsum_{k\in\cK_s} \alpha_{k_s}^{4+2\nu}\pi_{k_s}^2}
  }
  \\&\at{use the properties of $\cK_s$}\leq
  \cO\brs*{\sqrt{\tsum_{s=1}^J\tsum_{k\in\cK_s} \alpha_{k_s}^2\pi_{k_s}^2\brs*{h(x_{k_s})}^{2(1+\nu)}}
    +\brs{T\cR}^{1+\nu}\cL_0 \tsum_{s=1}^J\sqrt{T} \alpha_{k_s}^{2+\nu}\pi_{k_s}
  }
  \\&\at{use the properties of $\cK_s$}\leq
  \cO\brs*{\sqrt{\tsum_{s=1}^J\tsum_{k\in\cK_s} \alpha_{k_s}^2\pi_{k_s}^2\brs*{h(x_{k_s})}^{2(1+\nu)}}
    +\brs{T\cR}^{1+\nu}\cL_0 \tmfrac{1}{\sqrt{T}}\tsum_{s=1}^J\tsum_{k\in\cK_s} \alpha_{k_s}^{2+\nu}\pi_{k_s}
  }
  \\&\at{uses the inequality above with $q=2+\nu$ and the fact that $\pi_{k_s} \leq \pi_k$}\leq
  \cO\brs*{\sqrt{\tsum_{s=1}^J\tsum_{k\in\cK_s} \alpha_{k_s}^2\pi_{k_s}^2\brs*{h(x_{k_s})}^{2(1+\nu)}}
    +\brs{T\cR}^{1+\nu}\cL_0 \tmfrac{1}{\sqrt{T}}\tsum_{k=1}^K\alpha_k^{2+\nu}\pi_k
  }
  \\&\at{uses the definition in \cref{eq:h}}\leq
  \cO\brs*{
    \sqrt{\tsum_{s=1}^J\tsum_{k\in\cK_s} \alpha_{k_s}^2\pi_{k_s}^2\brs*{f(x_{k_s}) - f^*}^2}
    +
    \brs{T\cR}^{1+\nu}\cL_0 \tmfrac{1}{\sqrt{T}}\tsum_{k=1}^K\alpha_k^{2+\nu}\pi_k
    +
    \beta\sqrt{\tsum_{k=1}^K \alpha_k^2\pi_k^2}
  }
  \\&\at{uses the definition in \cref{eq:h} and the properties of $\cK_s$}\leq
  \cO\brs*{
    \tsum_{s=1}^J\sqrt{T} \alpha_{k_s}\pi_{k_s}\brs{f(x_{k_s}) - f^*}
    +
    \brs{T\cR}^{1+\nu}\cL_0 \tmfrac{1}{\sqrt{T}}\tsum_{k=1}^K\alpha_k^{2+\nu}\pi_k
    +
    \brs{\sigma/\cL_1}\sqrt{\tsum_{k=1}^K \alpha_k^2\pi_k^2}
  },
\end{align*}
where \annotate.
Furthermore, we can obtain the following inequality:
\begin{align*}
  \mi{0}\tsum_{s=1}^J \sqrt{T}\alpha_{k_s}\pi_{k_s}\brs*{f(x_{k_s}) - f^*}
  \\&\at{uses the definition in \cref{eq:h} and the properties of $\cK_s$ and the inequality above with $q = 1$}\leq
  \cO\brs*{
    \tsum_{s=1}^J\tsum_{k \in \cK_s}\tmfrac{\alpha_k\pi_{k_s}}{\sqrt{T}} \brs*{\brs{h(x_{k_s})}^{1+\nu} - \brs{h(x_k)}^{1+\nu} + f(x_k) - f^*}
  }
  \\&\leq
  \cO\brs*{
    \tsum_{s=1}^J\tsum_{k \in \cK_s}\tmfrac{\alpha_k\pi_{k_s}}{\sqrt{T}} \brs*{
      \brs{h(x_k) + \abs*{h(x_{k_s}) - h(x_k)}}^{\vphantom{1}\nu}\abs*{h(x_{k_s}) - h(x_k)}
      + f(x_k) - f^*
    }
  }
  \\&\at{use the definitions in \cref{eq:h} and Young's inequality}\leq
  \cO\brs*{
    \tsum_{s=1}^J\tsum_{k \in \cK_s}\tmfrac{\alpha_k\pi_{k_s}}{\sqrt{T}} \brs*{
      \abs{h(x_{k_s}) - h(x_k)}^{1+\nu}
      +\beta^{\frac{\nu}{1+\nu}}\abs*{h(x_{k_s}) - h(x_k)}
      + f(x_k) - f^*
    }
  }
  \\&\at{uses \Cref{lem:SL01} and the upper bound on $\norm{x_k - x_{k_s}}$ above}\leq
  \cO\brs*{
    \tsum_{s=1}^J\tsum_{k \in \cK_s}\tmfrac{\alpha_k\pi_{k_s}}{\sqrt{T}}
    \norm{x_k-x_{k_s}}^{1+\nu}\brr*{\cL_0 + \brs{\cL_1 h(x_k)}^{1+\nu}}
  }
  \\&
  +\cO\brs*{
    \tsum_{s=1}^J\tsum_{k \in \cK_s}\tmfrac{\alpha_k\pi_{k_s}}{\sqrt{T}} \brs*{
      \beta^{\frac{\nu}{1+\nu}}\norm{x_k-x_{k_s}}\brr*{\cL_0^{\frac{1}{1+\nu}} + \cL_1 h(x_k)}
      + f(x_k) - f^*
    }
  }
  \\&\at{use the definitions in \cref{eq:h} and Young's inequality}\leq
  \cO\brs*{
    \tsum_{s=1}^J\tsum_{k \in \cK_s}\tmfrac{\alpha_k\pi_{k_s}}{\sqrt{T}}
    \norm{x_k-x_{k_s}}^{1+\nu}\brr*{\cL_0 + \cL_1^{1+\nu}\beta + \cL_1^{1+\nu}\brs{f(x_k) - f^*}}
  }
  \\&
  +\cO\brs*{
    \tsum_{s=1}^J\tsum_{k \in \cK_s}\tmfrac{\alpha_k\pi_{k_s}}{\sqrt{T}} \brs*{
      \norm{x_k-x_{k_s}}\brr*{\brs{\cL_0\beta^\nu}^{\frac{1}{1+\nu}} + \cL_1\beta + \cL_1 \brs{f(x_k) - f^*}}
      + f(x_k) - f^*
    }
  }
  \\&\at{uses the definition in \cref{eq:h}}\leq
  \cO\brs*{
    \tsum_{s=1}^J\tsum_{k \in \cK_s}\tmfrac{\alpha_k\pi_{k_s}}{\sqrt{T}}
    \norm{x_k-x_{k_s}}^{1+\nu}\brr*{\cL_0 + \sigma\cL_1^{\nu} + \cL_1^{1+\nu}\brs{f(x_k) - f^*}}
  }
  \\&
  +\cO\brs*{
    \tsum_{s=1}^J\tsum_{k \in \cK_s}\tmfrac{\alpha_k\pi_{k_s}}{\sqrt{T}} \brs*{
      \norm{x_k-x_{k_s}}\brr*{\sigma + \cL_1 \brs{f(x_k) - f^*}}
      + f(x_k) - f^*
    }
  }
  \\&\at{use the upper bound on $\norm{x_k - x_{k_s}}$ above}\leq
  \cO\brs*{
    \tsum_{s=1}^J\tsum_{k \in \cK_s}\tmfrac{\alpha_{k}\pi_{k_s}}{\sqrt{T}}
    \brs*{
      \norm{x_k-x_{k_s}}^{1+\nu}\brr*{\cL_0 + \sigma\cL_1^{\nu}}
      +\norm{x_k-x_{k_s}}\sigma
      + f(x_k) - f^*
    }
  }
  \\&\at{use the upper bound on $\norm{x_k - x_{k_s}}$ above}\leq
  \cO\brs*{
    \tsum_{s=1}^J\tsum_{k \in \cK_s}\tmfrac{\alpha_k\pi_{k_s}}{\sqrt{T}}
    \brs*{
      \brs{\alpha_{k_s}T\cR}^{1+\nu}\brr*{\cL_0 + \sigma\cL_1^{\nu}}
      +\alpha_{k_s}T\cR\sigma
      + f(x_k) - f^*
    }
  }
  \\&\at{uses Young's inequality}\leq
  \cO\brs*{
    \tsum_{s=1}^J\tsum_{k \in \cK_s}
    \tmfrac{\pi_{k_s}}{\sqrt{T}}
    \brs*{
      \brs*{\alpha_k^{2+\nu} + \alpha_{k_s}^{2+\nu}}
      \brs{T\cR}^{1+\nu}\brr*{\cL_0 + \sigma\cL_1^{\nu}}
      +
      \brs{\alpha_k^2 + \alpha_{k_s}^2}T\cR\sigma
    }
  }
  \\&
  +\cO\brs*{
    \tsum_{s=1}^J\tsum_{k \in \cK_s}\tmfrac{\alpha_k\pi_{k_s}}{\sqrt{T}}
    \brs{
      f(x_k) - f^*
    }
  }
  \\&\at{uses the inequality above with $q = 2+\nu$ and $q=2$, and the fact that $\pi_{k_s} \leq \pi_k$}\leq
  \cO\brs*{
    \tsum_{k=1}^K
    \tmfrac{\alpha_k\pi_k}{\sqrt{T}}
    \brs*{
      \brs{\alpha_kT\cR}^{1+\nu}\brr*{\cL_0 + \sigma\cL_1^{\nu}}
      +
      \alpha_kT\cR\sigma
      +
      f(x_k) - f^*
    }
  },
\end{align*}
where \annotate.
Finally, we can upper-bound $\E{\reg_K}$ as follows:
\begin{align*}
  \mi{2}\E{\reg_K}
  \at{use the inequalities above}\leq
  \cO\brs*{
    \cR\sigma\sqrt{\tsum_{k=1}^K \alpha_k^2\pi_k^2}
    +\cL_0\cR^{1+\nu}
    \brs*{\tsum_{k=1}^K [\alpha_k\pi_k]^{\frac{2}{1-\nu}}}^{\frac{1-\nu}{2}}
  }
  \\&
  +\tfrac{1}{4}\tsum_{k=1}^K \alpha_k\pi_k\E*{f(x_k) - f^*}
  +\cO\brs*{\E*{\cR\cL_1\sqrt{\tsum_{k=1}^K \alpha_k^2\pi_k^2\brs*{f(x_k) - f^*}^2}}}
  \\&\at{use the inequalities above}\leq
  \cO\brs*{
    \cR\sigma\sqrt{\tsum_{k=1}^K \alpha_k^2\pi_k^2}
    +\cL_0\cR^{1+\nu}
    \brs*{\tsum_{k=1}^K [\alpha_k\pi_k]^{\frac{2}{1-\nu}}}^{\frac{1-\nu}{2}}
  }
  +\tfrac{1}{4}\tsum_{k=1}^K \alpha_k\pi_k\E*{f(x_k) - f^*}
  \\&
  +\cO\brs*{
    \cR\cL_1\tsum_{s=1}^J\sqrt{T} \alpha_{k_s}\pi_{k_s}\E*{f(x_{k_s}) - f^*}
    +
    \brs{T\cR}^{1+\nu}\cL_0 \tmfrac{\cR\cL_1}{\sqrt{T}}\tsum_{k=1}^K\alpha_k^{2+\nu}\pi_k
  }
  \\&\at{use the inequalities above}\leq
  \cO\brs*{
    \cR\sigma\sqrt{\tsum_{k=1}^K \alpha_k^2\pi_k^2}
    +\cL_0\cR^{1+\nu}
    \brs*{\tsum_{k=1}^K [\alpha_k\pi_k]^{\frac{2}{1-\nu}}}^{\frac{1-\nu}{2}}
  }
  \\&
  +\cO\brs*{
    \cR\cL_1\tsum_{k=1}^K
    \tmfrac{\alpha_k\pi_k}{\sqrt{T}}
    \brs*{
      \brs{\alpha_kT\cR}^{1+\nu}\brr*{\cL_0 + \sigma\cL_1^{\nu}}
      +
      \alpha_kT\cR\sigma
    }
  }
  \\&
  +\cO\brs*{
    \brs{T\cR}^{1+\nu}\cL_0 \tmfrac{\cR\cL_1}{\sqrt{T}}\tsum_{k=1}^K\alpha_k^{2+\nu}\pi_k
  }
  +\brs*{\tfrac{1}{4} + \cO\brs*{\tmfrac{\cR\cL_1}{\sqrt{T}}}}\tsum_{k=1}^K \alpha_k\pi_k\E*{f(x_k) - f^*}
  \\&\at{uses the definition of $T$}\leq
  \tfrac{1}{2}\tsum_{k=1}^K \alpha_k\pi_k\E*{f(x_k) - f^*}
  +\cO\brs*{
    \cR\sigma\sqrt{\tsum_{k=1}^K \alpha_k^2\pi_k^2}
    +\cL_0\cR^{1+\nu}
    \brs*{\tsum_{k=1}^K [\alpha_k\pi_k]^{\frac{2}{1-\nu}}}^{\frac{1-\nu}{2}}
  }
  \\&
  +\cO\brs*{
    T\cR\sigma\tsum_{k=1}^K
    \alpha_k^2\pi_k
    \brr*{
      \brs{\alpha_kT\cR\cL_1}^{\nu}
      +
      1
    }
    +
    \brs{T\cR}^{1+\nu}\cL_0\tsum_{k=1}^K
    \alpha_k^{2+\nu}\pi_k
  }
  \\&\at{uses the definition in \cref{eq:alpha_uni}}\leq
  \tfrac{1}{2}\tsum_{k=1}^K \alpha_k\pi_k\E*{f(x_k) - f^*}
  +\cO\brs*{
    \cR\sigma\sqrt{\tsum_{k=1}^K \alpha_k^2\pi_k^2}
    +\cL_0\cR^{1+\nu}
    \brs*{\tsum_{k=1}^K [\alpha_k\pi_k]^{\frac{2}{1-\nu}}}^{\frac{1-\nu}{2}}
  }
  \\&
  +\cO\brs*{
    T\cR\sigma\tsum_{k=1}^K
    \alpha_k^2\pi_k
    +
    \brs{T\cR}^{1+\nu}\cL_0\tsum_{k=1}^K
    \alpha_k^{2+\nu}\pi_k
  },
\end{align*}
where \annotate.\qed

\proofsubsection{thm:uni}

First, observe that the statement of \Cref{lem:regret_uni} holds for arbitrary $K \geq T$.
Next, we define $\Delta_k = \E{f(x_k) - f^*}$ and $\delta_k$ as follows:
\begin{align*}
  \delta_k = e\cF &+ \cO\brs*{
    [T\cR]^{1+\nu}\cL_0\tsum_{i=1}^k
    \alpha_i^{2+\nu}\pi_i
    +
    T\cR\sigma\tsum_{i=1}^k\alpha_i^2\pi_i
  }
  \\&
  +\cO\brs*{
    \cR\sigma\sqrt{\tsum_{i=1}^k \alpha_i^2\pi_i^2}
    +
    \cL_0\cR^{1+\nu}\brs*{\tsum_{i=1}^k [\alpha_i\pi_i]^{\frac{2}{1-\nu}}}^{(1-\nu)/2}
  }.
\end{align*}
Using \Cref{lem:regret_uni,lem:wd}, and using similar arguments to \Cref{proof:thm:exp1}, we can show that the following inequality holds for all $k \in \N$:
\begin{align*}
  \pi_k \Delta_k \leq \delta_k
  +\tfrac{1}{2}\tsum_{i=1}^k \alpha_i\pi_i \Delta_i.
\end{align*}
Hence, the conditions of \Cref{lem:tech} with $p=1/2$ are satisfied.
Furthermore, for $k \leq S T$, we can upper-bound $\delta_k$ as follows:
\begin{align*}
  \delta_k
  \at{uses the definitions in \cref{eq:alpha_uni,eq:fk}}\leq
  e\cF &+  \cO\brs*{
    [T\cR]^{1+\nu}\cL_0\alpha_1^{2+\nu}\pi_k\tsum_{i=0}^{\infty}(1-\alpha_1)^i
    +
    T\cR\sigma\alpha_1^2\pi_k\tsum_{i=0}^{\infty}(1-\alpha_1)^i
  }
  \\&
  +\cO\brs*{
    \cR\sigma\alpha_1\pi_k\sqrt{\tsum_{i=0}^{\infty}(1-\alpha_1)^{2i}}
    +
    \cL_0\cR^{1+\nu}\alpha_1\pi_k\brs*{\tsum_{i=0}^{\infty} (1-\alpha_1)^{\frac{2i}{1-\nu}}}^{(1-\nu)/2}
  }
  \\\leq
  e\cF &+  \cO\brs*{
    [T\cR]^{1+\nu}\cL_0\alpha_1^{1+\nu}\pi_k
    +
    T\cR\sigma\alpha_1\pi_k
  }
  \\&
  +\cO\brs*{
    \cR\sigma\sqrt{\alpha_1}\pi_k
    +
    \cL_0\cR^{1+\nu}\alpha_1^{\frac{1+\nu}{2}}\pi_k
  }
  \\\at{uses the definition in \cref{eq:alpha_uni}}\leq
  e\cF &+ \cO\brs*{
    \brs{\cL_0/\cL_1^{1+\nu}}\pi_k
    +
    \brs{\sigma/\cL_1}\pi_k
  }
  \\\at{uses the definition of $\hF$}=
  e\cF &+ \cO\brs*{\hF\pi_k}
\end{align*}
where \annotate.
For $k \geq ST+1$, we can upper-bound $\delta_k$ as follows:
\begin{align*}
  \delta_k
  &\at{uses the definitions in \cref{eq:alpha_uni,eq:fk}}\leq
  \delta_{ST}  + \cO\brs*{
    [T\cR]^{1+\nu}\cL_0\alpha_{ST+1}^{2+\nu}\pi_k\tsum_{i=0}^{\infty}(1-\alpha_{ST+1})^i
    +
    T\cR\sigma\alpha_{ST+1}^2\pi_k\tsum_{i=0}^{\infty}(1-\alpha_{ST+1})^i
  }
  \\&
  +\cO\brs*{
    \cR\sigma\alpha_{ST+1}\pi_k\sqrt{\tsum_{i=0}^{\infty}(1-\alpha_{ST+1})^{2i}}
    +
    \cL_0\cR^{1+\nu}\alpha_{ST+1}\pi_k\brs*{\tsum_{i=0}^{\infty} (1-\alpha_{ST+1})^{\frac{2i}{1-\nu}}}^{\frac{1-\nu}{2}}
  }
  \\&\leq
  \delta_{ST} +  \cO\brs*{
    \brs{T\cR}^{1+\nu}\cL_0\alpha_{ST+1}^{1+\nu}\pi_k
    +
    T\cR\sigma\alpha_{ST+1}\pi_k
  }
  \\&
  +\cO\brs*{
    \cR\sigma\sqrt{\alpha_{ST+1}}\pi_k
    +
    \cL_0\cR^{1+\nu}\alpha_{ST+1}^{\frac{1+\nu}{2}}\pi_k
  }
  \\&\at{uses the definition in \cref{eq:alpha_uni}}\leq
  \delta_{ST} + \cO\brs*{\epsilon\pi_k},
\end{align*}
where \annotate.
Finally, we can upper-bound $\Delta_K$ as follows:
\begin{align*}
  \Delta_K
  &\at{uses \Cref{lem:tech} with $p=1/2$}\leq
  \cO\brs*{\tmfrac{\delta_K}{\pi_K} + \tsum_{k=1}^K\tmfrac{\alpha_k\delta_k}{\sqrt{\pi_K\pi_{k-1}}}}
  \\&\at{uses the fact that $K \geq TS+1$}\leq
  \cO\brs*{
    \tmfrac{\delta_K}{\pi_K}
    +
    \tsum_{k=1}^{ST}\tmfrac{\alpha_k\delta_k}{\sqrt{\pi_K\pi_{k-1}}}
    +
    \tsum_{k=ST+1}^{K}\tmfrac{\alpha_k\delta_k}{\sqrt{\pi_K\pi_{k-1}}}
  }
  \\&\at{uses the inequalities above}\leq
  \cO\brs*{
    \tmfrac{\cF}{\pi_K}
    +
    \tmfrac{\hF\pi_{ST}}{\pi_K}
    +
    \epsilon
    +
    \tsum_{k=1}^{ST}\brs*{
      \tmfrac{\alpha_k\cF}{\sqrt{\pi_K\pi_{k-1}}}
      +
      \tmfrac{\hF\alpha_k\pi_k}{\sqrt{\pi_K\pi_{k-1}}}
  }}
  \\&
  +
  \cO\brs*{\tsum_{k=ST+1}^{K}\brs*{
      \tmfrac{\alpha_k\cF}{\sqrt{\pi_K\pi_{k-1}}}
      +
      \tmfrac{\hF\alpha_k\pi_{ST}}{\sqrt{\pi_K\pi_{k-1}}}
      +
      \tmfrac{\epsilon\alpha_k\pi_k}{\sqrt{\pi_K\pi_{k-1}}}
  }}
  \\&\at{uses the definitions in \cref{eq:fk,eq:alpha_uni}}\leq
  \cO\brs*{
    \cF[1-\alpha_1]^{ST/2}[1-\alpha_{ST+1}]^{(K-ST)/2}
    +
    \hF[1-\alpha_{ST+1}]^{(K-ST)/2}
    +
    \epsilon
  }
  \\&\at{uses the definitions in \cref{eq:KS_uni}}\leq
  \cO\brs*{\epsilon},
\end{align*}
where \annotate.\qed

\newpage

\section[Proofs for \crtCref{sec:quasar}]{Proofs for \Cref{sec:quasar}}

\subsection{Technical Lemma}

In this section, we present a technical lemma that will be used in the proof of \Cref{thm:quasar}.

\begin{lemma}<lem:tech2>
  Let $p \in (0,1/2]$, and let $\brf{\Delta_k}_{k \in \N} \subset \R_+$ and $\brf{\delta_k}_{k \in \N} \subset \R_+$ be two sequences of non-negative numbers satisfying the following condition for all $k \in \N$:
  \begin{equation}
    \pi_k \Delta_k \leq \delta_k + p\gamma\tsum_{i=1}^k \alpha_i\pi_i \Delta_i,
  \end{equation}
  where $\alpha_k,\pi_k$ satisfy \cref{eq:pi_quasar}. Then, the following inequalities hold for all $k \in \N$:
  \begin{equation}
    \tsum_{i=1}^k\alpha_i\pi_i \Delta_i
    \leq
    \tsum_{i=1}^k \alpha_i\delta_i\brs*{\pi_k/\pi_{i-1}}^p.
  \end{equation}
\end{lemma}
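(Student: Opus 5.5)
We follow the proof of \Cref{lem:tech} almost verbatim, replacing $\alpha_k$ by $\gamma\alpha_k$ throughout; the recursion in \cref{eq:pi_quasar} is exactly \cref{eq:fk} with this substitution. Define $\tau_k = \sum_{i=1}^k \alpha_i\pi_i\Delta_i$ and $\tau_0 = 0$.

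\textbf{Step 1: one-step inequality.} From $\tau_k = \alpha_k\pi_k\Delta_k + \tau_{k-1}$ and the hypothesis we get
\begin{equation*}
  \tau_k \leq \alpha_k\delta_k + p\gamma\alpha_k\tau_k + \tau_{k-1}.
\end{equation*}
Since $\gamma\alpha_k \in [0,1]$ and $p\in(0,1/2]$, concavity of $t\mapsto t^p$ gives $p\gamma\alpha_k \leq 1-(1-\gamma\alpha_k)^p$. Hence
\begin{equation*}
  (1-\gamma\alpha_k)^p\tau_k \leq \alpha_k\delta_k + \tau_{k-1}.
\end{equation*}
By \cref{eq:pi_quasar}, $1-\gamma\alpha_k = \pi_{k-1}/\pi_k$, so this becomes
\begin{equation*}
  \tau_k/\pi_k^p \leq \bigl(\tau_{k-1} + \alpha_k\delta_k\bigr)/\pi_{k-1}^p.
\end{equation*}

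\textbf{Step 2: unroll.} Here $\pi_0 = 1 > 0$, and all $\pi_k$ are positive and nondecreasing, provided $\gamma\alpha_k < 1$. The recursion $\pi_k = \pi_{k-1}/(1-\gamma\alpha_k)$ requires this, and it holds for the schedule \cref{eq:alpha_quasar}. If $\gamma\alpha_k = 1$ were allowed, $\pi_k$ would be undefined or infinite, so we implicitly assume $\gamma\alpha_k<1$. Applying the inequality of Step~1 for $k, k-1, \dots, 1$ and using $\tau_0=0$ telescopes to
\begin{equation*}
  \tau_k \leq \sum_{i=1}^k \alpha_i\delta_i\,\bigl(\pi_k/\pi_i\bigr)^p\bigl(\pi_i/\pi_{i-1}\bigr)^p = \sum_{i=1}^k \alpha_i\delta_i\,(\pi_k/\pi_{i-1})^p.
\end{equation*}
This is a short induction: multiply the step inequality by $\pi_k^p$ and substitute the bound for $\tau_{k-1}$.

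The only potential obstacle is the case $\pi_0 = 0$ handled separately in \Cref{lem:tech}. This case does not arise here, since $\pi_0 = 1$ in \cref{eq:pi_quasar}, which is why the $\min\{\cdot\}$ term of \Cref{lem:tech} disappears. Otherwise the argument is routine. The one point to check is that the concavity inequality is applied with $t = \gamma\alpha_k \in [0,1]$, which holds because $\gamma\in(0,1]$ and $\alpha_k\in[0,1]$.
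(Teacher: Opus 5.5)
Your proposal is correct and matches the paper's proof essentially line by line: you define the same partial sums $\tau_k$, use the same concavity (Bernoulli) bound $p\gamma\alpha_k \le 1-(1-\gamma\alpha_k)^p$ to get $\tau_k/\pi_k^p \le (\tau_{k-1}+\alpha_k\delta_k)/\pi_{k-1}^p$, and unroll using $\pi_0 = 1$. Your side remarks on $\tau_0 = 0$, $\tau_k \ge 0$, and $\gamma\alpha_k < 1$ are exactly the conditions the paper uses implicitly (it writes ``for $t<1$'').
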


\begin{proof}

  We define $\tau_k \geq 0$ for $k \in \N$ as follows:
  \begin{align*}
    \tau_k = \tsum_{i=1}^k \alpha_i\pi_i\Delta_i.
  \end{align*}
  We can upper-bound $\tau_k$ as follows:
  \begin{align*}
    \tau_k
    \at{uses the definition of $\tau_k$}=
    \alpha_k\pi_k\Delta_k + \tau_{k-1}
    \at{uses the assumption of \Cref{lem:tech2}}\leq
    \alpha_k\delta_k + p\gamma\alpha_k\tau_k + \tau_{k-1}
    \at{uses the inequality $pt \leq 1 - (1-t)^p$ for $t < 1$ which is implied by the concavity of the function $t\mapsto t^p$}\leq
    \alpha_k\delta_k + \brs*{1 - (1-\gamma\alpha_k)^p}\tau_k + \tau_{k-1},
  \end{align*}
  where \annotate.
  After rearranging, we obtain the following:
  \begin{align*}
    \alpha_k\delta_k + \tau_{k-1}
    \geq
    (1-\gamma\alpha_k)^p\tau_k
    \at{uses the definition in \cref{eq:pi_quasar}}=
    \brs*{\pi_{k-1}/\pi_k}^p\tau_k
  \end{align*}
  where \annotate.
  After rearranging one more time, we obtain the following:
  \begin{align*}
    \brs*{1/\pi_k}^p\tau_k \leq \brs*{1/\pi_{k-1}}^p\brs*{\tau_{k-1} + \alpha_k\delta_k}.
  \end{align*}
  Hence, we can upper-bound $\tau_k$ as follows:
  \begin{align*}
    \tau_k \leq \tsum_{i=1}^k \alpha_i\delta_i\brs*{\pi_k/\pi_{i-1}}^p.
  \end{align*}
\end{proof}

\proofsubsection{lem:quasar}

\textbf{\Cref{eq:quasar} implies \Cref{eq:quasar_grad}.}
Since the function $f\gprime(x;\cdot\,)$ is subadditive and homogeneous, it is equal to the support function of the generalized subdifferential $\gpartial f(x)$. Hence, $\<w,g>\leq f\gprime(x;w)$ for all $g \in \gpartial f(x)$. Furthermore, we can upper-bound $\<x^*-x, \nabla f(x;w)>$ as follows:
\begin{align*}
  \<x^*-x, \nabla f(x;w)>
  &\at{uses the fact that $\nabla f(x;w) \in \gpartial f(x)$}\leq
  f\gprime(x;x^* - x)
  \\&\at{uses the definition in \cref{eq:gprime}}=
  \tlimsup_{x'\to x, t\to +0} \tfrac{1}{t}\brs{f(x' + t(x^* - x)) - f(x')}
  \\&\leq
  \tlimsup_{x'\to x, t\to +0} \tfrac{1}{t}\brs{f(x' + t(x^* - x)) - f(x' + t(x^* - x'))}
  \\&
  +\tlimsup_{x'\to x, t\to +0} \tfrac{1}{t}\brs{f(x' + t(x^* - x')) - f(x')}
  \\&\at{use the fact that the function $f(x)$ is locally Lipschitz}=
  \tlimsup_{x'\to x, t\to +0} \tfrac{1}{t}\brs{f(x' + t(x^* - x')) - f(x')}
  \\&\at{uses the $\gamma$-quasar convexity of the function $f(x)$ in \cref{eq:quasar}}\leq
  \tlimsup_{x'\to x, t\to +0} \gamma\brs{f^* - f(x')}
  \\&\at{use the fact that the function $f(x)$ is locally Lipschitz}=
  \gamma\brs{f^* - f(x)},
\end{align*}
where \annotate.

\textbf{\Cref{eq:quasar_grad} implies \Cref{eq:quasar}.}
Let the function $p(t)\colon [0,1] \to \R_+$ be defined as follows:
\begin{align*}
  p(t) = f(x^* + t(x-x^*)) - f^*.
\end{align*}
We can lower-bound $p(t') - p(t)$ for all $0 < t < t' \leq 1$ as follows:
\begin{align*}
  \mi{3}p(t') - p(t)
  \at{uses Lemma~3 of \citet{zhang2020complexity}}=
  \mint_0^1 f'(x^* + (t + s(t'-t))(x-x^*);(t' - t)(x-x^*)) \df s
  \\&\at{uses the property in \cref{eq:ggrad}}=
  \mint_0^1 \<\nabla f(x^* + (t + s(t'-t))(x-x^*);(t' - t)(x-x^*)),(t' - t)(x-x^*)> \df s
  \\&=
  \mint_0^1 \tmfrac{\<\nabla f(x^* + (t + s(t'-t))(x-x^*);(t'-t)(x-x^*)),[x^* + (t + s(t'-t))(x-x^*)] - x^*>\df s}{t/(t'-t) + s}
  \\&\at{uses the inequality in \cref{eq:quasar_grad}}\geq
  \mint_0^1\tmfrac{\gamma\brs{f(x^* + (t + s(t'-t))(x-x^*)) - f^*}(t'-t)\df s}{t + s(t'-t)}
  =
  \mint_t^{t'}\tmfrac{\gamma }{s}p(s)\df s \geq 0,
\end{align*}
where \annotate.
Hence, the function $p(t)$ is non-decreasing. Furthermore, let $c \in [0,1]$ such that $p(c) > 0$, and let the function $q(t)\colon [c,1] \to \R$ be defined as follows:
\begin{align*}
  q(t) = p(c) + \mint_c^t \tmfrac{\gamma}{s}p(s) ds.
\end{align*}
From the previously obtained inequalities, we can conclude that $p(c) = q(c) \leq q(t) \leq p(t)$ for all $t \in [c,1]$. Moreover, from the continuity of the function $p(t)$ it follows that the function $q(t)$ is continuously differentiable, and we can lower-bound its derivative $\dot{q}(t)$ as follows:
\begin{align*}
  \dot{q}(t)
  &\at{uses the definition of the function $q(t)$}=
  [\gamma/t]\cdot p(t)
  \at{uses the inequality $q(t)\leq p(t)$ above}\geq
  [\gamma/t]\cdot q(t),
\end{align*}
where \annotate.
This implies the following inequality:
\begin{align*}
  \dot{l}(t) \geq \gamma/t,
\end{align*}
where $l(t) = \ln q(t)$. After integrating this inequality on $[c,1]$, we obtain the following:
\begin{align*}
  \ln q(1) \geq  \ln q(c) - \gamma \ln c.
\end{align*}
By taking the exponent of both sides and rearranging, we obtain the following:
\begin{align*}
  q(c) \leq c^\gamma q(1) \leq (1 - \gamma(1-c))q(1).
\end{align*}
Finally, we can upper-bound $p(c)$ as follows:
\begin{align*}
  p(c) = q(c)  \leq  (1 - \gamma(1-c))q(1) \leq (1 - \gamma(1-c))p(1),
\end{align*}
which trivially holds even if $p(c) = 0$ and proves the inequality in \cref{eq:quasar} by choosing $c = 1-t$. \qed

\proofsubsection{thm:quasar}

First, using the definitions in \cref{eq:wd,eq:alpha_quasar,eq:g_quasar} and the fact that $z_k \in Q_\cR$, we can upper-bound $\norm{\ox_k - x_k}$ and $\norm{x_k - x_{k-1}}$ as follows:
\begin{align*}
  \max\brf{\norm{\ox_k - x_k},\norm{x_k - x_{k-1}}} \las \cO\brs{\alpha_k\cR} \leq \cO\brs{1/(T\cG_1)}.
\end{align*}
Next, we can upper-bound $(1-\alpha_k)\pi_k\E*{f(x_k) - f(x_{k-1})}$ as follows:
\begin{align*}
  \mi{3}(1-\alpha_k)\pi_k\E*{f(x_k) - f(x_{k-1})}
  \\&\at{Lemma~3 of \citet{zhang2020complexity} and the property in \cref{eq:ggrad}}=
  (1-\alpha_k)\pi_k\E*{\mint_0^1 \<\nabla f(x_{k-1} + \zeta(x_k - x_{k-1});x_k - x_{k-1}),x_k - x_{k-1}>\df \zeta}
  \\&\at{uses the definition of $\ox_k$ in \cref{eq:g_quasar} and the definition of $\zeta_k$ and its independence of $x_k$ and $x_{k-1}$}=
  (1-\alpha_k)\pi_k\E{\<\nabla f(\ox_k;x_k - x_{k-1}),x_k - x_{k-1}>}
  \\&\at{uses the definition in \cref{eq:wd}}=
  \alpha_k\pi_k\E{\<\nabla f(\ox_k;x_k - x_{k-1}),z_k - x_k>}
  \\&=
  \alpha_k\pi_k\E{\<\nabla f(\ox_k;x_k - x_{k-1}),x^* - \ox_k + \ox_k - x_k + z_k - x^*>}
  \\&\at{uses \Cref{lem:quasar}}\leq
  \alpha_k\pi_k\E{\gamma\brs{f^* - f(\ox_k)} + \<\nabla f(\ox_k;x_k - x_{k-1}), \ox_k -x_k + z_k - x^*>}
  \\&\at{uses the definition of $g_k$ in \cref{eq:g_quasar}, the unbiasedness in \Cref{ass:qG01}, and the fact that $\xi_k$ is independent of $\zeta_k$ and $z_k,x_k,x_{k-1}$}=
  \gamma\alpha_k\pi_k\E*{f^* - f(\ox_k)} + \alpha_k\pi_k\E{\<\nabla f(\ox_k;x_k - x_{k-1}), \ox_k -x_k>} + \E{\<g_k, z_k - x^*>},
\end{align*}
where \annotate.
After rearranging, we obtain the following inequality:
\begin{align*}
  \mi{3}\pi_k\E*{f(x_k) - f^*}
  \\&\leq
  (1-\gamma\alpha_k)\pi_k\E*{f(x_{k-1}) - f^*}
  +\E{\<g_k, z_k - x^*>}
  \\&
  +(1-\gamma)\alpha_k\pi_k\E*{f(x_k) - f(x_{k-1})}
  +\gamma\alpha_k\pi_k\E*{f(x_k) - f(\ox_k)}
  \\&
  +\alpha_k\pi_k\E{\<\nabla f(\ox_k;x_k - x_{k-1}), \ox_k -x_k>}
  \\&\at{uses the definition in \cref{eq:pi_quasar}}\leq
  \pi_{k-1}\E*{f(x_{k-1}) - f^*}
  +\E{\<g_k, z_k - x^*>}
  \\&
  +(1-\gamma)\alpha_k\pi_k\E*{f(x_k) - f(x_{k-1})}
  +\gamma\alpha_k\pi_k\E*{f(x_k) - f(\ox_k)}
  \\&
  +\alpha_k\pi_k\E{\<\nabla f(\ox_k;x_k - x_{k-1}), \ox_k -x_k>}
  \\&\at{uses \Cref{ass:qG01} and the upper bound on $\norm{\ox_k - x_k}$ above}\leq
  \pi_{k-1}\E*{f(x_{k-1}) - f^*}
  +\E{\<g_k, z_k - x^*>}
  \\&
  +(1-\gamma)\alpha_k\pi_k\E*{f(x_k) - f(x_{k-1})}
  +\gamma\alpha_k\pi_k\E*{f(x_k) - f(\ox_k)}
  \\&
  +\cO\brs*{\alpha_k^2\pi_k\cR\cG_0 + \tmfrac{\alpha_k\pi_k}{T}\E{f(\ox_k) - f^*}}
  \\&\leq
  \pi_{k-1}\E*{f(x_{k-1}) - f^*}
  +\E{\<g_k, z_k - x^*>}
  +\cO\brs*{\alpha_k^2\pi_k\cR\cG_0 + \tmfrac{\alpha_k\pi_k}{T}\E{f(x_k) - f^*}}
  \\&
  +\cO\brs*{\alpha_k\pi_k\E{\abs{f(x_k) - f(x_{k-1})} + \abs{f(x_k) - f(\ox_k)}}}
  \\&\at{uses \Cref{ass:qG01}, \Cref{rem:qGM01}, and \Cref{lem:M01}}\leq
  \pi_{k-1}\E*{f(x_{k-1}) - f^*}
  +\E{\<g_k, z_k - x^*>}
  +\cO\brs*{\alpha_k^2\pi_k\cR\cG_0 + \tmfrac{\alpha_k\pi_k}{T}\E{f(x_k) - f^*}}
  \\&
  +\cO\brs*{\alpha_k\pi_k\E*{\brr{\cG_0 + \cG_1\brs{f(x_k) - f^*}}\exp\brr{\cG_1\norm{\ox_k - x_k}}\norm{\ox_k - x_k}}}
  \\&
  +\cO\brs*{\alpha_k\pi_k\E*{\brr{\cG_0 + \cG_1\brs{f(x_k) - f^*}}\exp\brr{\cG_1\norm{x_{k-1} - x_k}}\norm{x_{k-1} - x_k}}}
  \\&\at{uses the upper bounds on $\norm{\ox_k - x_k}$ and $\norm{x_{k-1} - x_k}$ above}\leq
  \pi_{k-1}\E*{f(x_{k-1}) - f^*}
  +\E{\<g_k, z_k - x^*>}
  +\cO\brs*{\alpha_k^2\pi_k\cR\cG_0 + \tmfrac{\alpha_k\pi_k}{T}\E{f(x_k) - f^*}}
  \\&\at{uses the definition of $T$}\leq
  \pi_{k-1}\E*{f(x_{k-1}) - f^*} + \E{\<g_k, z_k - x^*>}
  +\tfrac{1}{4}\gamma\alpha_k\pi_k\E*{f(x_k) - f^*}
  +\cO\brs*{\alpha_k^2\pi_k \cR\cG_0}
\end{align*}
where \annotate.
After summing these inequalities for $k = 1,\dots,K$, we obtain the following:
\begin{align*}
  \pi_K\E{f(x_K) - f^*}
  &\leq
  \brs{f(x_0) - f^*} + \E{\reg_K}
  \\&
  +\tfrac{1}{4}\gamma\tsum_{k=1}^{K}\alpha_k\pi_k\E*{f(x_k) - f^*}
  +  \cO\brs*{\cR\cG_0\tsum_{k=1}^{K}\alpha_k^2\pi_k},
\end{align*}
Next, using \Cref{ass:regret,ass:qG01} and similar arguments to the proof of \Cref{lem:regret} in \Cref{proof:lem:regret}, we can upper-bound $\E{\reg_K}$ as follows:
\begin{align*}
  \E{\reg_K}
  &\leq
  \cO\brs*{\cR\cG_0\sqrt{\tsum_{k=1}^K \alpha_k^2\pi_k^2}
  + \E*{\cR\cG_1\sqrt{\tsum_{k=1}^K \alpha_k^2\pi_k^2\brs*{f(\ox_k) - f^*}^2}}}
  \\&\at{uses \Cref{ass:qG01}, \Cref{rem:qGM01}, and \Cref{lem:M01}}\leq
  \cO\brs*{\cR\cG_0\sqrt{\tsum_{k=1}^K \alpha_k^2\pi_k^2}
  +\E*{\cR\cG_1\sqrt{\tsum_{k=1}^K \alpha_k^2\pi_k^2\brs*{f(x_k) - f^*}^2}}}
  \\&
  +\cO\brs*{\E*{\cR\cG_1\sqrt{\tsum_{k=1}^K \alpha_k^2\pi_k^2\brr*{\cG_0 + \cG_1\brs{f(x_k) - f^*}}^2\exp\brr{2\cG_1\norm{\ox_k - x_k}}\sqn{\ox_k - x_k}}}}
  \\&\at{uses the upper bound on $\norm{\ox_k - x_k}$ above}\leq
  \cO\brs*{\cR\cG_0\sqrt{\tsum_{k=1}^K \alpha_k^2\pi_k^2}
  +\E*{\cR\cG_1\sqrt{\tsum_{k=1}^K \alpha_k^2\pi_k^2\brs*{f(x_k) - f^*}^2}}}
  \\&
  +\cO\brs*{\E*{\tmfrac{\cR}{T}\sqrt{\tsum_{k=1}^K \alpha_k^2\pi_k^2\brr*{\cG_0 + \cG_1\brs{f(x_k) - f^*}}^2}}}
  \\&\leq
  \cO\brs*{\cR\cG_0\sqrt{\tsum_{k=1}^K \alpha_k^2\pi_k^2}
  +\E*{\cR\cG_1\sqrt{\tsum_{k=1}^K \alpha_k^2\pi_k^2\brs*{f(x_k) - f^*}^2}}},
\end{align*}
where \annotate.
Moreover, using similar arguments to the proof of \Cref{lem:regret_exp2} in \Cref{proof:lem:regret_exp2}, with the modified weight recursion in \cref{eq:pi_quasar}, and observing that the blockwise bound $\pi_k \leq \cO\brs{\pi_{k_s}}$ still holds, we can further upper-bound $\E{\reg_K}$ as follows:
\begin{align*}
  \E{\reg_K}
  &\leq
  \cO\brs*{\cR\cG_0\sqrt{\tsum_{k=1}^K \alpha_k^2\pi_k^2}
    +\sqrt{T}\cR^2\cG_0\cG_1\tsum_{k=1}^K\alpha_k^2\pi_k
  +\tmfrac{\cR\cG_1}{\sqrt{T}}\tsum_{k=1}^K  \alpha_k\pi_k\E*{f(x_k) - f^*}}
  \\&\at{uses the definition of $T$}\leq
  \tfrac{1}{4}\gamma\tsum_{k=1}^K  \alpha_k\pi_k\E*{f(x_k) - f^*}
  +\cO\brs*{\cR\cG_0\sqrt{\tsum_{k=1}^K \alpha_k^2\pi_k^2}
  +\sqrt{T}\cR^2\cG_0\cG_1\tsum_{k=1}^K\alpha_k^2\pi_k}
\end{align*}
where \annotate. Now, we combine this with the previously obtained upper bound on $\pi_K\E{f(x_K) - f^*}$ and get the following inequality:
\begin{align*}
  \pi_K\E{f(x_K) - f^*} &\leq \brs{f(x_0) - f^*}
  +\tfrac{1}{2}\gamma\tsum_{k=1}^K  \alpha_k\pi_k\E*{f(x_k) - f^*}
  \\&
  +\cO\brs*{\cR\cG_0\sqrt{\tsum_{k=1}^K \alpha_k^2\pi_k^2}
  +\sqrt{T}\cR^2\cG_0\cG_1\tsum_{k=1}^K\alpha_k^2\pi_k},
\end{align*}
which holds for all $K \geq T$.
Finally, let $\Delta_k = \E{f(x_k) - f^*}$, and let $\delta_k$ be defined as follows:
\begin{align*}
  \delta_k
  =
  e\cF + \cO\brs*{\cR\cG_0\sqrt{\tsum_{i=1}^k \alpha_i^2\pi_i^2}
  +T\cR\cG_0\tsum_{i=1}^k\alpha_i^2\pi_i}.
\end{align*}
Using the previous inequality and similar arguments to the proof of \Cref{thm:exp1} in \Cref{proof:thm:exp1}, for all $K \in \N$, we obtain the following inequality:
\begin{align*}
  \pi_K \Delta_K \leq \delta_K + \tfrac{1}{2}\gamma\tsum_{k=1}^K\alpha_k\pi_k\Delta_k.
\end{align*}
Next, for $k \leq S T$, we can upper-bound $\delta_k$ as follows:
\begin{align*}
  \delta_k
  &\at{uses the definitions in \cref{eq:alpha_quasar,eq:pi_quasar}}=
  e\cF + \cO\brs*{\cR\cG_0\sqrt{\tsum_{i=1}^k \alpha_1^2(1-\gamma\alpha_1)^{-2i}}
  +T\cR\cG_0\tsum_{i=1}^k\alpha_1^2(1-\gamma\alpha_1)^{-i}}
  \\&\leq
  e\cF + \cO\brs*{
    \sqrt{\alpha_1/\gamma}\pi_k\cR\cG_0
    +
    T\cR\cG_0 [\alpha_1/\gamma] \pi_k
  }
  \\&\at{uses the definition in \cref{eq:alpha_quasar}}\leq
  e\cF + \cO\brs*{\pi_k\cG_0/[\gamma\cG_1]},
\end{align*}
where \annotate.
Furthermore, for $k \geq ST+1$, we can upper-bound $\delta_k$ as follows:
\begin{align*}
  \delta_k
  \at{uses the definitions in \cref{eq:alpha_quasar,eq:pi_quasar}}\leq
  \delta_{ST}
  &+\cO\brs*{\cR\cG_0\sqrt{\tsum_{i=1}^{k-ST} \alpha_{ST+1}^2 \pi_{ST}^2(1-\gamma\alpha_{ST+1})^{-2i}}}
  \\
  &+\cO\brs*{T\cR\cG_0\tsum_{i=1}^{k-ST}\alpha_{ST+1}^2\pi_{ST}(1-\gamma\alpha_{ST+1})^{-i}}
  \\\leq
  \delta_{ST}
  &+
  \cO\brs*{\cR\cG_0\sqrt{\alpha_{ST+1}/\gamma}\pi_k
  +T\cR\cG_0[\alpha_{ST+1}/\gamma]\pi_k}
  \\\at{uses the definition in \cref{eq:alpha_quasar}}\leq
  \delta_{ST}
  &+
  \cO\brs*{\epsilon \pi_k}
\end{align*}
where \annotate.
Finally, we can upper-bound $\Delta_K$ as follows:
\begin{align*}
  \Delta_K
  &\at{uses \Cref{lem:tech2} with $p=1/2$}\leq
  \cO\brs*{
    \tmfrac{\delta_K}{\pi_K} + \tsum_{k=1}^K\tmfrac{\gamma\alpha_k\delta_k}{\sqrt{\pi_K\pi_{k-1}}}
  }
  \\&\at{uses the fact that $K \geq TS+1$}=
  \cO\brs*{
    \tmfrac{\delta_K}{\pi_K}
    +
    \tsum_{k=1}^{ST}\tmfrac{\gamma\alpha_k\delta_k}{\sqrt{\pi_K\pi_{k-1}}}
    +
    \tsum_{k=ST+1}^{K}\tmfrac{\gamma\alpha_k\delta_k}{\sqrt{\pi_K\pi_{k-1}}}
  }
  \\&\at{uses the inequalities above}\leq
  \cO\brs*{
    \tmfrac{\cF}{\pi_K} + \tmfrac{\cG_0\pi_{ST}}{\gamma\cG_1\pi_K} + \epsilon
    +
    \tsum_{k=1}^{ST}\brs*{
      \tmfrac{\gamma\alpha_k\cF}{\sqrt{\pi_K\pi_{k-1}}}
      +
      \tmfrac{\gamma\cG_0\alpha_k\pi_k}{\gamma\cG_1\sqrt{\pi_K\pi_{k-1}}}
    }
  }
  \\&
  +\cO\brs*{
    \tsum_{k=ST+1}^{K}\brs*{
      \tmfrac{\gamma\alpha_k\cF}{\sqrt{\pi_K\pi_{k-1}}}
      +
      \tmfrac{\gamma\cG_0\alpha_k\pi_{ST}}{\gamma\cG_1\sqrt{\pi_K\pi_{k-1}}}
      +
      \tmfrac{\gamma\epsilon\alpha_k\pi_k}{\sqrt{\pi_K\pi_{k-1}}}
    }
  }
  \\&\at{use the definitions in \cref{eq:pi_quasar,eq:alpha_quasar}}\leq
  \cO\brs*{
    \tmfrac{\cF}{\pi_K} + \tmfrac{\cG_0\pi_{ST}}{\gamma\cG_1\pi_K} + \epsilon
    +
    \tmfrac{\gamma\cF\alpha_1}{\sqrt{\pi_K}}\tsum_{k=0}^{ST-1}\brs{1-\gamma\alpha_1}^{k/2}
    +
    \tmfrac{\gamma\cG_0\alpha_1}{\gamma\cG_1\sqrt{\pi_K}}\tsum_{k=0}^{ST-1}\brs{1-\gamma\alpha_1}^{-k/2-1}
  }
  \\&
  +
  \cO\brs*{
    \brs*{
      \tmfrac{\gamma\alpha_{ST+1}\cF}{\sqrt{\pi_K\pi_{ST}}}
      +
      \tmfrac{\gamma\cG_0\alpha_{ST+1}\sqrt{\pi_{ST}}}{\gamma\cG_1\sqrt{\pi_K}}
    }
    \tsum_{k=0}^{K-ST-1}[1-\gamma\alpha_{ST+1}]^{k/2}
  }
  \\&
  +
  \cO\brs*{
    \tmfrac{\gamma\epsilon\alpha_{ST+1}\sqrt{\pi_{ST}}}{2\sqrt{\pi_K}}
    \tsum_{k=0}^{K-ST-1}
    [1-\gamma\alpha_{ST+1}]^{-k/2-1}
  }
  \\&\at{use the definitions in \cref{eq:pi_quasar,eq:alpha_quasar}}\leq
  \cO\brs*{
    \cF[1-\gamma\alpha_1]^{ST/2}[1-\gamma\alpha_{ST+1}]^{[K-ST]/2}
    +
    \brs{\cG_0/[\gamma\cG_1]}[1-\gamma\alpha_{ST+1}]^{[K-ST]/2}
    +
    \epsilon
  }
  \\&\at{uses the definitions in \cref{eq:KS_quasar}}\leq
  \cO\brs*{\epsilon},
\end{align*}
where \annotate.\qed

\end{document}